\newtheorem{Thm}[equation]{Theorem}
\newtheorem{Lem}[equation]{Lemma}
\newtheorem{Cor}[equation]{Corollary}
\newtheorem{Prop}[equation]{Proposition}
\theoremstyle{remark}
\newtheorem{Rem}[equation]{Remark}
\theoremstyle{remark}
\theoremstyle{definition}
\newtheorem{Def}[equation]{Definition}
\numberwithin{equation}{section}
\newcommand{\C}{\mathbb{C}}           
\newcommand{\F}{\mathbb{ F}}           
\newcommand{\Hom}{\operatorname{Hom}}
\renewcommand{\ker}{\operatorname{ker }}
\newcommand{\im}{\operatorname{im }}
\newcommand{\fb}{{\mathfrak b}}
\newcommand{\fg}{{\mathfrak g}}
\newcommand{\fh}{{\mathfrak h}}
\newcommand{\fk}{{\mathfrak k}}
\newcommand{\fl}{{\mathfrak l}}
\newcommand{\fp}{{\mathfrak p}}
\newcommand{\fu}{{\mathfrak u}}
\newcommand{\fz}{\mathfrak z}
\newcommand{\f}{\mathfrak}
\newcommand{\ga}{\alpha}
\newcommand{\gb}{\beta}
\newcommand{\gd}{\delta}
\newcommand{\gD}{\Delta}
\newcommand{\gre}{\epsilon}
\newcommand{\gL}{\Lambda}
\newcommand{\gs}{\sigma}
\newcommand{\gt}{\theta}
\renewcommand{\tilde}{\widetilde}
\newcommand{\tilz}{\tilde{z}}
\newcommand{\spover}{{\overline{s}}_p}
\newcommand{\stotal}{\overline{s}}
\newcommand{\ua}{\underline{a}}
\newcommand{\ux}{\underline{x}}
\newcommand{\uy}{\underline{y}}
\renewcommand{\bar}[1]{\overline{#1}}
\begin{document}
\parskip=4pt
\baselineskip=14pt

\title[Relative Hochschild-Serre]{The relative Hochschild-Serre spectral sequence and the Belkale-Kumar product}

\author{Sam Evens}
\address{
Department of Mathematics,
University of Notre Dame,
Notre Dame, IN 46556
}
\email{sevens@nd.edu}

\author{William Graham}
\address{
Department of Mathematics,
University of Georgia,
Boyd Graduate Studies Research Center,
Athens, GA 30602
}
\email{wag@math.uga.edu}
\thanks{\noindent Evens was supported by the National Security Agency.\\
Mathematics Subject Classification: 17B56, 14M15, 20G05}

\begin{abstract}
We consider the Belkale-Kumar cup product $\odot_t$ on $H^*(G/P)$ for
a generalized flag variety $G/P$ with parameter $t \in \C^m$,
where $m=\dim(H^2(G/P))$.   For each $t\in \C^m$, we define
an associated parabolic subgroup $P_K \supset P$.   We show that the ring $(H^*(G/P), \odot_t)$
contains a graded subalgebra $A$ isomorphic to $H^*(P_K/P)$ with the
usual cup product, where $P_K$ is a parabolic subgroup associated
to the parameter $t$.   Further, we prove that $(H^*(G/P_K), \odot_0)$
is the quotient of the ring $(H^*(G/P), \odot_t)$ with respect to
the ideal generated by elements of positive degree of $A$.
 We prove the above results by using basic facts about
the Hochschild-Serre spectral
sequence for relative Lie algebra cohomology, and most of the
paper consists of proving  these facts using the original
approach of Hochschild and Serre.  
\end{abstract}

\maketitle

\noindent 

\section{Introduction} \label{s.intro}
Let $G$ be a complex semisimple algebraic group with parabolic
subgroup  $P$ of $G$.  
Let $m=\dim(H^2(G/P))$.  For each $t\in \C^m$,
 Belkale and Kumar defined a product $\odot_t$  which degenerates the
usual cup product on $H^*(G/P)$, 
 and gave striking applications of this product to the eigenvalue problem and
to the problem of finding $G$-invariants in tensor products of representations
\cite{BeKu:06}.
In \cite{EG:11}, we gave a new construction of this product, and showed
that the ring $(H^*(G/P,\C), \odot_t)$ is isomorphic to a relative Lie algebra
cohomology ring $H^*(\fg_t, \fl_{\gD})$; here $\fg_t \supset \fl_{\gD}$ are certain
subalgebras of $\fg \times \fg$.  

In this paper, we study the ring $H^*(G/P, \odot_t)$.   Let
$\ga_1, \dots, \ga_n$ be the simple roots with respect to a 
Borel subgroup $B \subset P$ and a Cartan subgroup $H \subset B$. 
 Let $L$ be the Levi factor of $P$ containing $H$, let $\fl$ be
the Lie algebra of $L$, and number the simple roots so that
$I=\{ \ga_{m+1}, \dots, \ga_n\}$ are roots of $\fl$, and $\ga_1, \dots,
\ga_m$ are roots of $\fu$, the nilradical of the Lie algebra of $P$.
For each $t = (t_1, \ldots, t_m) \in \C^m$, let 
$J(t)=\{ 1 \le q \le m : t_q \not= 0 \}$, and let $K=J(t) \cup I$.
Let $\fl_K$ be the Levi subalgebra generated by the Lie algebra
$\fh$ of $H$ and the root spaces $\fg_{\pm \ga_i }$ for $i\in K$,
and let $L_K$ be the corresponding subgroup.  Let $P_K=BL_K$ be
the corresponding standard parabolic.

\begin{Thm} \label{t.borelextension}
For parabolic subgroups $P \subset P_K$ of $G$, with $P_K$ determined
by $t\in \C^m$ as above, 
\par\noindent (1)  
$H^*(P_K/P)$  is isomorphic to a graded subalgebra $A$
of $(H^*(G/P), \odot_t)$.  
\par\noindent (2) The ring $(H^*(G/P_K), \odot_0) \cong
(H^*(G/P), \odot_t)/I_+$, where $I_+$ is the ideal of
$(H^*(G/P), \odot_t)$ generated by positive degree elements
of $A$.
\end{Thm}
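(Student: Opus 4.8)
The plan is to deduce both parts from the multiplicative relative Hochschild--Serre spectral sequence of a carefully chosen ideal of $\fg_t$, together with a dimension count forcing degeneration. I use throughout the identification $(H^*(G/P),\odot_t)\cong H^*(\fg_t,\fl_{\gD})$ of \cite{EG:11}, in which $\fg_t\subset\fg\oplus\fg$ is spanned by the diagonal $\fl_{\gD}$ and twisted copies of the root spaces of $\fu$ and $\fu_-$, the twist on $\fg_{\ga}$ being $t^{\ga}=\prod_{q\le m}t_q^{c_q}$ for $\ga=\sum_i c_i\ga_i$. The key elementary point is that $t^{\ga}\ne 0$ exactly when the support of $\ga$ among $\ga_1,\dots,\ga_m$ lies in $J(t)$, i.e.\ exactly when $\ga$ is a root of $\fl_K$, while $t^{\ga}=0$ for every root $\ga$ of the nilradical $\fu_K$ of $\fp_K$.

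First I would set $\fh=\fu_K^{(1)}\oplus\fu_{K,-}^{(2)}$, the copy of $\fu_K$ in the first factor plus the copy of $\fu_{K,-}$ in the second, and verify by a root--support computation that $\fh$ is an ideal of $\fg_t$ with its two summands commuting, and that untwisting (legitimate because $t^{\ga}\ne0$ on roots of $\fl_K$) gives $\fg_t/\fh\cong\fl_K$, carrying $\fl_{\gD}$ onto $\fl\subset\fl_K$. Feeding the data $\fl_{\gD}\subset\fg_t$, $\fh$ an ideal, $\fl_{\gD}\cap\fh=0$ into the relative Hochschild--Serre spectral sequence developed in this paper yields a multiplicative spectral sequence $E_2^{p,q}=H^p(\fg_t/\fh,\fl_{\gD};H^q(\fh))\Rightarrow H^{p+q}(\fg_t,\fl_{\gD})$. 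Its base row is $E_2^{p,0}=H^p(\fl_K,\fl)=H^*(P_K/P)$ with ordinary cup product (Kostant), and its fiber column is $E_2^{0,q}=H^q(\fh)^{\fl_K}$, which is precisely the relative cochain model realizing $(H^*(G/P_K),\odot_0)$, since $\fl_{K,\gD}\oplus\fh$ is the $t=0$ Lie algebra attached to the pair $(G,P_K)$ in \cite{EG:11}; as the inflation and restriction edge maps are ring homomorphisms, these are identifications of rings.

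The crux is degeneration, which I would obtain by dimensions. One has $\dim_{\C}E_2=\dim H^*(P_K/P)\cdot\dim H^*(G/P_K)$, and Leray--Hirsch for the honest flag bundle $P_K/P\to G/P\to G/P_K$ makes this equal to $\dim H^*(G/P)=\dim H^*(\fg_t,\fl_{\gD})$; since no differential can raise total dimension, $E_2=E_\infty$ and $H^*(\fg_t,\fl_{\gD})$ is a free module over the image of the base row. Part (1) is then immediate: $E_\infty^{*,0}=E_2^{*,0}$, so inflation embeds $H^*(P_K/P)$ as a graded subalgebra $A$. For part (2), freeness identifies the ideal $I_+$ generated by $A_+$ with the positive filtration level $F^{\ge1}$, so the restriction edge map descends to a ring isomorphism $H^*(\fg_t,\fl_{\gD})/I_+\xrightarrow{\ \sim\ }E_\infty^{0,*}=(H^*(G/P_K),\odot_0)$.

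I expect the decisive difficulty to be multiplicative bookkeeping rather than degeneration: one must know that the edge homomorphisms are genuine ring maps in the relative theory, that the fiber ring $E_2^{0,*}$ carries the Belkale--Kumar product $\odot_0$ and not merely the right underlying graded space, and that $I_+$ is exactly $F^{\ge1}$ rather than something larger. All three rest on the multiplicativity and naturality of the relative Hochschild--Serre sequence, which is precisely what the body of the paper is built to supply.
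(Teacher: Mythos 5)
Your proposal follows essentially the same route as the paper: Section \ref{s.bk} realizes $(H^*(G/P),\odot_t)$ as $H^*(\fg_K,\fl_{\gD})$ with $\fg_K=\fl_{K,\gD}+\tilde{\fu}_K$ (the untwisted model that your ``untwisting'' of $\fg_t$ produces, up to a harmless swap of which factor carries $\fu_K$ versus $\fu_{K,-}$), runs the relative Hochschild--Serre spectral sequence for the ideal $\tilde{\fu}_K$ with $\fk=\fl_{\gD}$ and $\IK=0$, proves degeneration at $E_2$ by matching $|W^P|=|W_{P_K}/W_P|\cdot|W/W_{P_K}|$ (your Leray--Hirsch count is the same arithmetic, via Corollary 3.18 of \cite{EG:11} and Kostant), and concludes with the edge-map identifications (Propositions \ref{p.bottomedge} and \ref{p.leftedge}) and the freeness/ideal argument (Proposition \ref{p.spectralideal} via Corollary \ref{c.productbk}).

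There is, however, one genuine gap, and it is not among the three difficulties you flag at the end. Your degeneration argument asserts $\dim_{\C}E_2=\dim H^*(P_K/P)\cdot\dim H^*(G/P_K)$ as if it were automatic, but a priori $E_2^{p,q}=H^p(\fl_K,\fl;H^q(\tilde{\fu}_K))$, and $H^q(\tilde{\fu}_K)$ is \emph{not} an invariant module: it decomposes as $H^q(\tilde{\fu}_K)^{L_K}\oplus\sum_i V_i$ with nontrivial irreducible $L_K$-constituents $V_i$, and you must show these contribute nothing. Since $\fl_K$ is only reductive, Whitehead's lemma does not apply directly; the paper's Proposition \ref{p.hsdegen} handles this by a secondary Hochschild--Serre sequence splitting off the center $\fz_K$ (so $H^*(\fz_K;V_i)=0$ when $\fz_K$ acts nontrivially, and the remaining cases are nontrivial modules for the semisimple quotient $\fl_K/\fz_K$, killed by Theorem 28.1 of \cite{CE:48}). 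This same vanishing is exactly the hypothesis \eqref{e.includeiso} of Proposition \ref{p.productbk}, without which you do not get the algebra isomorphism $E_2\cong H^*(\fl_K,\fl)\otimes H^*(\tilde{\fu}_K)^{L_K}$ --- and that isomorphism is what your subsequent steps (``no differential can raise total dimension,'' freeness over the base row, and $I_+=F^{\ge 1}$ via Proposition \ref{p.spectralideal}) all quietly rely on. So the missing ingredient is a representation-theoretic input specific to this application, not supplied by the general multiplicativity and naturality of the spectral sequence machinery you defer to; once it is inserted, your argument closes and coincides with the paper's.
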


This theorem  asserts in effect that $(H^*(G/P), \odot_t)$
has a classical part isomorphic to the usual cohomology ring,
with associated quotient given by the degenerate Belkale-Kumar
product.
Using the relative Lie algebra cohomology description of the
product, this theorem follows from the
Hochschild-Serre spectral sequence in relative Lie algebra cohomology.
To make this argument rigorous, one needs to
know certain facts about the spectral sequence.  In particular, one must
show that the spectral sequence degenerates at the $E_2$-term, compute
the edge morphisms, and determine the product structure on the $E_2$-term.
The main point of this paper
is to carry this out using the original approach of Hochschild
and Serre \cite{HS:53}. 

While Borel and Wallach give an excellent treatment of representation
theoretic aspects of the Hochschild-Serre
spectral sequence in \cite{BoWa:00}, they do not discuss the ring 
structure,
and we are aware of no general reference for the facts needed to prove
our theorem.   The Hochschild-Serre spectral sequence
is of general utility, and one of the goals of this paper is to provide a careful treatment
of these facts for the literature.  Also,
the construction of the relative Hochschild-Serre spectral sequence
in \cite{BoWa:00} uses the identification of the relative Lie algebra
cohomology groups with derived functors.  
In \cite{EG:11} the definition of these
groups using cochains was crucial in order to be able to apply results
of Kostant from \cite{Kos:63}.   By proving what we need using the
cochain definition, we avoid the necessity of defining the ring structure
in the Borel-Wallach setting, and of proving the compatibility of the
two settings.  

The approach of Hochschild and Serre generalizes in a mostly straightforward
fashion to the relative setting, but there is one new point.  In this setting
we have a Lie algebra $\fg$, an ideal $I$ and a subalgebra
$\fk$ which is reductive in $\fg$.  To construct the spectral sequence we need
an action of $\fg/I$ on the relative cohomology group
$H^*(I, I \cap \fk; M)$ (here $M$ is a $\fg$-module).  If $I \cap \fk$ is nonzero,
the Lie algebra $\fg/I$ does not act in an obvious way on the space
of cochains $C^*(I, I \cap \fk; M)$.  Nevertheless, we are able to define the action
on the cohomology group 
by a formula involving cochains; verifying that this does yield the $d_1$ differential in the
spectral sequence is the main technical complication of the paper.
In fact, in the Belkale-Kumar application, $I \cap \fk = 0$, so this complication
can be avoided, but it seemed worthwhile to develop the spectral sequence
without this potentially limiting assumption.

The contents of the paper are as follows.  In Section \ref{s.prelim} we give
basic facts about a Lie algebra $\fg$ equipped with $I$ and $\fk$.  We also
recall some basic definitions related to Lie algebra cohomology, and
define the action of $\fg/I$ on
$H^*(I, I \cap \fk; M)$.  In Section \ref{s.filtration} we study the filtration
on cochains introduced by Hochschild and Serre and extend their results to
the relative setting.   In Section \ref{s.differentials}, we prove some formulas
involving differentials which are used in calculating the differentials in the
spectral sequence.  Section \ref{s.spectral} recalls basic definitions
and facts about spectral sequences.  Section \ref{s.hochserre}
proves the existence of the Hochschild-Serre spectral sequence in the relative
setting, identifies the edge maps and  proves some results on the product
structure.  Finally, Section \ref{s.bk} applies these results to the Belkale-Kumar
product.

\section{Preliminaries} \label{s.prelim}
We work over a field $\F$ of arbitrary characteristic.  In Section \ref{s.bk}, we take
$\F = \C$.
Let $\fg$ be a Lie algebra, $I \subset \fg$ an ideal, and $\fk \subset \fg$ a subalgebra.
Write $I_{\fk} = I \cap \fk$.
We assume that $\fk$ is reductive in $\fg$.  This is equivalent to
the statement that if $U \subset V$ are $\fk$-submodules of $\fg$, then
$U$ has a $\fk$-invariant complement $U'$, so $V = U \oplus U'$.

\begin{Lem} \label{l.reductive}
There is a $\fk$-module decomposition of $\fg$ given by
$$
\fg = I_{\fk} \oplus I_L \oplus J_{\fk} \oplus J_L,
$$
such that
\begin{equation} \label{e.reductive}
I = I_{\fk} \oplus I_L \mbox{  and  }\fk = I_{\fk} \oplus J_{\fk}.
\end{equation}
Moreover, $J := J_{\fk} \oplus J_L$ commutes with $I_{\fk}$.
\end{Lem}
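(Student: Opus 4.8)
The plan is to construct the four summands one at a time, using only the stated reductivity hypothesis — that every inclusion of $\fk$-submodules of $\fg$ admits a $\fk$-invariant complement — so that the argument remains valid in arbitrary characteristic and invokes no characteristic-zero input such as Weyl's theorem. The observation that organizes everything is that $I_{\fk} = I \cap \fk$ lies simultaneously in the ideal $I$ and in the subalgebra $\fk$. Consequently, for \emph{any} $\fk$-stable subspace $W$ with $W \cap I = 0$, one has $[I_{\fk}, W] \subseteq [\fk, W] \cap [I, \fg] \subseteq W \cap I = 0$, because $\fk$ stabilizes $W$ while $I$ is an ideal. Thus any $\fk$-invariant complement to $I$ automatically centralizes $I_{\fk}$, and this is what will deliver the commutation statement at the end for free.

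First I would verify that $I_{\fk}$ is a $\fk$-submodule of $\fg$: since $I_{\fk} \subseteq I$ and $I_{\fk}\subseteq\fk$, we get $[\fk, I_{\fk}] \subseteq [\fk, I] \cap [\fk, \fk] \subseteq I \cap \fk = I_{\fk}$. Applying reductivity to the $\fk$-submodule inclusions $I_{\fk} \subseteq \fk$ and $I_{\fk} \subseteq I$ then produces $\fk$-invariant complements $J_{\fk}$ and $I_L$ with $\fk = I_{\fk} \oplus J_{\fk}$ and $I = I_{\fk} \oplus I_L$, giving two of the required identities in \eqref{e.reductive}.

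Next I would produce $J_L$. Both $I$ and $J_{\fk}$ are $\fk$-submodules, and $I \cap J_{\fk} \subseteq I \cap \fk \cap J_{\fk} = I_{\fk} \cap J_{\fk} = 0$, so $I \oplus J_{\fk}$ is a $\fk$-submodule of $\fg$. Reductivity supplies a $\fk$-invariant complement $J_L$, yielding $\fg = I \oplus J_{\fk} \oplus J_L = I_{\fk} \oplus I_L \oplus J_{\fk} \oplus J_L$, which is exactly the desired decomposition (and reconfirms $I = I_{\fk}\oplus I_L$, $\fk = I_{\fk}\oplus J_{\fk}$).

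Finally, setting $J = J_{\fk} \oplus J_L$, I would note that $J$ is a $\fk$-invariant complement to $I$ in $\fg$, so the opening observation applies verbatim: $[I_{\fk}, J] \subseteq J \cap I = 0$. The main obstacle is conceptual rather than computational — it is recognizing that the commutation requirement ``$J$ commutes with $I_{\fk}$'' is not an extra condition to be engineered by a delicate choice of complements, but is automatically forced by the containment $I_{\fk}\subseteq I\cap\fk$ together with the $\fk$-invariance of the complement to $I$. Once this is seen, every summand is obtained by a single application of complete reducibility, and no further structure theory is needed.
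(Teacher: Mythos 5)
Your proof is correct and takes essentially the same route as the paper: both build $I_L$, $J_{\fk}$, and $J_L$ by three successive applications of the reductivity hypothesis (your complement to $I \oplus J_{\fk}$ is the paper's complement to $I + \fk$, which is the same subspace), and both deduce the commutation from $[I_{\fk}, J] \subseteq J \cap I = 0$ using that $J$ is $\fk$-stable and $I$ is an ideal. Your framing of the commutation as automatic for \emph{any} $\fk$-invariant complement of $I$ is a nice gloss, but it is the identical one-line argument the paper gives.
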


\begin{proof}
Let $I_L$ be a $\fk$-module complement of $I_{\fk}$ in $I$, and let
$J_{\fk}$ be a $\fk$-module complement of $I_{\fk}$ in $\fk$.
Then 
$$
I + \fk = I_{\fk} \oplus I_L \oplus J_{\fk}.
$$
Let $J_L$ be a $\fk$-module complement to $I+\fk$ in $\fg$.
The decomposition
$$
\fg = I_{\fk} \oplus I_L \oplus J_{\fk} \oplus J_L,
$$
satisfies \eqref{e.reductive}.   Moreover, $[J, I_{\fk}] \subset J \cap I = 0$, so
$J$ and $I_{\fk}$ commute.
\end{proof}

We do not
assert that $J$ is a subalgebra of $\fg$, only a $\fk$-submodule.
We have a $\fk$-module decomposition $\fg = I \oplus J$.
Let $\pi: \fg \to I$ denote the projection arising
from this decomposition.  We will sometimes write $x^*$ for $\pi(x)$.
Let $x \mapsto x^+ = x - x^*$ denote the projection $\fg \to J$.

\begin{Cor} \label{c.reductive}
The map $\pi: \fg \to I$ is $\fk$-equivariant, so
if $x \in \fk$, $y \in \fg$, then $[x, \pi(y) ] = \pi([x,y])$,
or in other words, $[x, y^*] = [x,y]^*$.  Moreover, 
$\pi |_I = \mbox{id}$ and $\pi(\fk) = I_{\fk}$.
\end{Cor}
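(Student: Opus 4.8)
The plan is to deduce everything directly from the $\fk$-module decomposition $\fg = I \oplus J$ furnished by Lemma \ref{l.reductive}, since $\pi$ is by definition the projection onto $I$ along $J$ associated with this decomposition.

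First I would record that both summands are $\fk$-submodules: by Lemma \ref{l.reductive} we have $I = I_{\fk} \oplus I_L$ and $J = J_{\fk} \oplus J_L$, each summand being a $\fk$-submodule of $\fg$ (with $\fk$ acting by the bracket). Hence for $x \in \fk$ and $y = y^* + y^+ \in \fg$ with $y^* \in I$ and $y^+ \in J$, one has $[x, y^*] \in I$ (indeed $I$ is an ideal) and $[x, y^+] \in J$. Since $[x,y] = [x,y^*] + [x,y^+]$ is the decomposition of $[x,y]$ relative to $\fg = I \oplus J$, its $I$-component is exactly $[x, y^*]$. This says $\pi([x,y]) = [x, \pi(y)]$, i.e. $[x,y]^* = [x, y^*]$, which is precisely the $\fk$-equivariance of $\pi$.

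The remaining two assertions are immediate from the nature of $\pi$ as a projection. Since the image of $\pi$ is $I$ and $\pi$ restricts to the identity on its image, $\pi|_I = \mbox{id}$. For $\pi(\fk) = I_{\fk}$, I would use $\fk = I_{\fk} \oplus J_{\fk}$ from \eqref{e.reductive}, together with the inclusions $I_{\fk} \subset I$ and $J_{\fk} \subset J$: thus $\pi$ fixes $I_{\fk}$ pointwise and annihilates $J_{\fk}$, so $\pi(\fk) = \pi(I_{\fk}) = I_{\fk}$.

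There is no real obstacle here; the statement is formal once one observes that $I$ and $J$ are $\fk$-submodules, which is exactly what Lemma \ref{l.reductive} provides. The only point requiring care is that $\fk$-equivariance is taken with respect to the adjoint action, so that the abstract identity $\pi(x \cdot y) = x \cdot \pi(y)$ for a module projection unwinds to the bracket formula $\pi([x,y]) = [x, \pi(y)]$ stated in the corollary.
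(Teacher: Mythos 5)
Your proof is correct and is exactly the argument the paper has in mind: the paper dismisses the corollary with ``The proof is immediate,'' relying precisely on the fact that $I = I_{\fk} \oplus I_L$ and $J = J_{\fk} \oplus J_L$ are $\fk$-submodules by Lemma \ref{l.reductive}, so that the projection $\pi$ onto $I$ along $J$ is $\fk$-equivariant and the remaining claims follow from $\fk = I_{\fk} \oplus J_{\fk}$ with $I_{\fk} \subset I$, $J_{\fk} \subset J$. Your write-up simply makes these routine verifications explicit; nothing is missing.
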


The proof is immediate.

\subsection{Lie algebra cohomology}
In this section we recall some of the basic definitions
of Lie algebra cohomology.
Let $M$ be a $\fg$-module, and let
$$
C^n(\fg;M) = \Hom(\gL^n \fg,M).
$$
We identify this space with the space of alternating $n$-linear maps
from $\fg \times \cdots \times \fg$ to $M$.
Given $\underline{x} = (x_1, \ldots, x_n) \in {\fg}^n$ and $z \in \fg$,
write 
$$
f(\ux) = f(x_1, \ldots, x_n)
$$
and
$$
f(z \cdot \underline{x}) = \sum_{i=1}^n f(x_1, \cdots, [z, x_i], \cdots, x_n).
$$

The Lie algebra cohomology differential $d: C^n(\fg;M) \to C^{n+1}(\fg;M)$ is
\begin{eqnarray*} \label{e.Liealgdiff}
df(x_0, \ldots, x_n) & = & \sum_{i \le n} (-1)^i x_i  f(x_0, \ldots ,\hat{x}_i , \ldots, x_n) \\
& + & \sum_{i<j \le n} (-1)^{i+j} f([x_i, x_j], x_0, \ldots,  \hat{x}_i \ldots  \hat{x_j} \ldots, x_n).
\end{eqnarray*}

We remark that for any vector space $V$ and any subspace
$W \subset V$ we always view $C^n(\fg/I; V) = \Hom_{\F}(\gL^n (\fg/I), V)$ as a subspace
of $C^n(\fg;V)$, so as arguments to $f \in C^n(\fg/I;V)$, we can allow elements of
$\fg$ or of $\fg/I$.  We adopt the analogous viewpoint for other spaces.

Define $\gt_z: C^n(\fg;M) \to C^n(\fg;M)$ and $i_x: C^n(\fg;M) \to C^{n-1}(\fg;M)$ by
\begin{eqnarray}
(\gt_z f)(\ux) & = & z \cdot (f(\ux)) - f(z \cdot \ux )   \\
(i_z f)(x_2, \ldots, x_n) & =  & f(z, x_2, \ldots, x_n).
\end{eqnarray}
Then
\begin{equation} \label{e.gt}
\gt_z = d i_z + i_z d.
\end{equation}
We will often write $z \cdot f$ for $\gt_z f$.
Observe that since $I$ is an ideal of $\fg$,
the same formula defines $\gt_z: C^n(I;M) \to C^n(I;M)$.

\begin{Lem} \label{l.dcommute}
The differentials on $C(\fg;M)$ and on $C(I;M)$ commute with the $\fg$-action.
\end{Lem}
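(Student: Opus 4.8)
The plan is to derive both assertions from the Cartan homotopy formula \eqref{e.gt}, $\gt_z = d i_z + i_z d$, together with $d^2 = 0$. On $C(\fg;M)$ this is immediate and uses no new idea: since $i_z$ is defined for every $z \in \fg$,
\[
\gt_z\, d = (d i_z + i_z d)\,d = d\, i_z\, d \qquad\text{and}\qquad d\,\gt_z = d\,(d i_z + i_z d) = d\, i_z\, d ,
\]
the extra terms vanishing because $d^2 = 0$; hence $\gt_z d = d\gt_z$ on $C(\fg;M)$ for all $z \in \fg$.

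The same computation applies word for word to $C(I;M)$ as long as $z \in I$, since then $i_z \colon C^n(I;M) \to C^{n-1}(I;M)$ is defined and \eqref{e.gt} continues to hold. The one genuine difficulty is the case $z \in \fg \smallsetminus I$: here $i_z$ has no meaning on $C(I;M)$, because one cannot insert an element outside $I$ into a cochain on $\gL^\bullet I$. Thus the homotopy-formula proof breaks down, and $\gt_z d = d\gt_z$ must be established by other means. This is precisely the place where the hypothesis that $I$ is an ideal must be used.

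To handle it I would introduce the restriction map $r \colon C(\fg;M) \to C(I;M)$ coming from the inclusion $\gL^\bullet I \hookrightarrow \gL^\bullet \fg$, and transport the $C(\fg;M)$ result across it. Three facts about $r$ are needed, each short. First, $r$ is surjective: the $\fk$-stable splitting $\fg = I \oplus J$ of Lemma \ref{l.reductive} exhibits $\gL^n I$ as a direct summand of $\gL^n \fg$. Second, $r$ is a chain map: when $df$ is evaluated on a tuple of elements of $I$, the only brackets appearing are $[x_i,x_j] \in I$ (as $I$ is a subalgebra) and the only module actions are by elements of $I$, whence $r(df) = d(rf)$. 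Third, $r$ is $\fg$-equivariant: in $(\gt_z f)(\ux) = z\cdot f(\ux) - f(z\cdot \ux)$ with $\ux$ drawn from $\gL^\bullet I$, each entry $[z,x_i]$ of $z \cdot \ux$ lies in $I$ precisely because $I$ is an ideal, so the value depends only on $f|_{\gL^\bullet I}$, giving $r\,\gt_z = \gt_z\, r$.

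With these compatibilities the commutation on $C(I;M)$ follows formally, for every $z \in \fg$: writing an arbitrary $g \in C(I;M)$ as $g = r(f)$, one has
\[
\gt_z\, d\, g = \gt_z\, d\, r f = r\,\gt_z\, d f = r\, d\,\gt_z f = d\,\gt_z\, r f = d\,\gt_z\, g ,
\]
where the central equality is the $C(\fg;M)$ case and the others use that $r$ is an equivariant chain map. The hard part is thus conceptual rather than computational: recognizing that the homotopy argument fails for $z \notin I$ and repairing it through the surjective equivariant restriction $r$, with the ideal hypothesis entering exactly in the $\fg$-equivariance of $r$.
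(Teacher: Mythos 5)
Your proof is correct and follows essentially the same route as the paper: the paper likewise disposes of $C(\fg;M)$ via the homotopy formula $\gt_z = d i_z + i_z d$ and transports the result to $C(I;M)$ through the surjective, $d$- and $\fg$-equivariant pullback (your map $r$). The only difference is that the paper asserts the properties of the pullback without proof, whereas you verify them explicitly, correctly locating where the ideal hypothesis enters.
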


\begin{proof}
The pullback $\gamma^*: C(\fg;M) \to C(I;M)$ is surjective and commutes
with $d$ and with the $\fg$ action.  Therefore it suffices to check that
the $\fg$-action on $C(\fg;M)$ commutes with $d$.  The element
$z \in \fg$ acts by $\gt_z$, and the fact that $\gt_z$ commutes with $d$
is an immediate consequence of \eqref{e.gt}.
\end{proof}

Let $C^n(\fg, \fk;M)$ denote the subspace of $C^n(\fg;M)$ consisting of those elements
annihilated by $i_x$ and $\gt_x$ for all $x \in \fk$, i.e., the subset
of linear maps $f: \Lambda^n(\fg/\fk) \to M$ such that
$$
\gt_x \cdot f = 0
$$
for all $x \in \fk$.  In other words, we can identify
$$
C^n(\fg, \fk;M) = \Hom_{\fk}(\gL^n(\fg/\fk);M).
$$
Let
$$
C(\fg, \fk;M) = \oplus_n C^n(\fg, \fk;M).
$$
It is well-known that
$C(\fg, \fk;M)$ is a subcomplex of $C(\fg;M)$.

Let $Z^q(\fg, \fk;M)$ and $B^q(\fg, \fk;M)$ denote the groups of cocycles and
coboundaries in $C^q(\fg, \fk;M)$.  
The $q$-th cohomology group of the complex $C(\fg, \fk;M)$ is denoted $H^q(\fg, \fk;M)$.
In the absolute case, $Z^q(\fg;M)$ and $B(\fg;M)$ denote the groups of cocycles and
coboundaries in $C^q(\fg;M)$, and $H^q(\fg;M)$ the cohomology.  
We adopt analogous notation for other Lie algebras.  

Pairings on relative Lie algebra cohomology are discussed in Section \ref{s.pairings}.

\subsection{The $\fg/I$-action on $H^*(I,I_{\fk};M)$}
In this section we define an action of $\fg/I$ on $H^q(I,I_{\fk};M)$ using
the complex $C(I,I_{\fk};M)$ (Borel and Wallach define a $\fg/I$-action
using injective resolutions).

\begin{Lem} \label{l.Jpreserve}
If $x \in J$, then the operator $\gt_x$ on $C^q(I;M)$ preserves the subspace $C^q(I,I_{\fk};M)$.
\end{Lem}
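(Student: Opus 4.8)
The plan is to verify directly that $\gt_x f$ satisfies the two defining conditions of a relative cochain for the pair $(I, I_{\fk})$: that it is annihilated by $i_w$ and by $\gt_w$ for every $w \in I_{\fk}$. The argument rests on the two Cartan-type commutation identities
$$
\gt_x i_w - i_w \gt_x = i_{[x,w]}, \qquad \gt_w \gt_x - \gt_x \gt_w = \gt_{[w,x]},
$$
valid for operators on $C^q(I;M)$, together with the crucial fact from Lemma \ref{l.reductive} that $J$ commutes with $I_{\fk}$, so that $[x,w] = 0$ whenever $x \in J$ and $w \in I_{\fk}$.

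First I would note that $\gt_x$ is a well-defined operator on $C^q(I;M)$ for every $x \in \fg$: since $I$ is an ideal we have $[x,y] \in I$ for $y \in I$, so the defining formula makes sense on cochains of $I$, as already observed after \eqref{e.gt}. Now fix $f \in C^q(I,I_{\fk};M)$ and $w \in I_{\fk}$, so that $i_w f = 0$ and $\gt_w f = 0$. Because $[x,w] = 0$, the first identity gives $i_w \gt_x = \gt_x i_w$, whence $i_w(\gt_x f) = \gt_x(i_w f) = 0$; and the second identity gives $\gt_w \gt_x = \gt_x \gt_w$, whence $\gt_w(\gt_x f) = \gt_x(\gt_w f) = 0$. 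Thus $\gt_x f$ is annihilated by $i_w$ and $\gt_w$ for all $w \in I_{\fk}$, i.e. $\gt_x f \in C^q(I,I_{\fk};M)$.

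The only step requiring genuine care, and the one I expect to be the main obstacle, is justifying the two commutation identities on $C^q(I;M)$ rather than on the ambient $C^q(\fg;M)$. Here one must check that $i_w$ and $\gt_w$ make sense on $C^q(I;M)$, which holds because $w \in I_{\fk} \subset I$, and that $z \mapsto \gt_z$ is a Lie algebra homomorphism into the operators on $C^q(I;M)$; the latter follows from $I$ being a $\fg$-module under the adjoint action (as an ideal) and $M$ being a $\fg$-module, and is implicit in Lemma \ref{l.dcommute}. The identity $\gt_x i_w - i_w \gt_x = i_{[x,w]}$ can be confirmed by a short direct computation, expanding both sides on an arbitrary $(q-1)$-tuple from $I$. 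It is precisely the commutativity $[J, I_{\fk}] = 0$ that forces the commutator terms to vanish, which is why the specific decomposition of Lemma \ref{l.reductive}, rather than an arbitrary complement to $I$ in $\fg$, is essential to the statement.
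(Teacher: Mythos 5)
Your proposal is correct and takes essentially the same route as the paper: both arguments hinge on the fact $[J, I_{\fk}] = 0$ from Lemma \ref{l.reductive} and verify the two defining conditions $\gt_w(\gt_x f) = 0$ and $i_w(\gt_x f) = 0$ for $w \in I_{\fk}$, with the $\gt_w$ condition handled in both cases by commuting $\gt_w$ past $\gt_x$. The only cosmetic difference is that you package the $i_w$ check as the operator identity $\gt_x i_w - i_w \gt_x = i_{[x,w]}$ on $C^q(I;M)$, whereas the paper expands $(i_{y_1} \gt_x c)(y_2, \ldots, y_q)$ directly and observes that the commutator term vanishes and every remaining term has $y_1$ as an argument of $c$ --- the same computation that proves your identity.
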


\begin{proof}
Let $c \in C^q(I,I_{\fk};M)$.  We must check that for $y \in I_{\fk}$, we have
$\gt_y \gt_x c = 0$ and $i_y \gt_x c = 0$.  The first equation holds because
$J$ and $I_{\fk}$ commute (Lemma \ref{l.reductive}); thus, $\gt_y$ and $\gt_x$ commute,
so $\gt_y \gt_x c = \gt_x \gt_y c = 0$, as $\gt_y c = 0$.  To check the second
equation, let $y_1 \in I_{\fk}$, and $y_2, \ldots, y_q \in I$.  Then 
$$
(i_{y_1} \gt_x c)(y_2, \ldots, y_q ) = x( c(y_1,  \ldots, y_q ))  + 
\sum_{i=1}^q c(y_1, \ldots, [y_i, x], \ldots, y_q).
$$
The term $c([y_1,x], y_2, \ldots, y_q)$ is zero because $[y_1,x]=0$.  All other terms
have $y_1$ as an argument of $c$; as $i_{y_1} c = 0$, these terms give $0$.
\end{proof}

If $x \in J$, then $\gt_x$ commutes with the differential $d$
on $C^q(I;M)$ (by Lemma \ref{l.dcommute}) and preserves the subspace
$C^q(I, I_{\fk};M)$ of $C^q(I;M)$.  Hence $\gt_x$ preserves the subspaces
$Z^q(I, I_{\fk};M)$ and $B^q(I,I_{\fk};M)$, so there is an induced
action of $\gt_x$ on the cohomology
$H^q(I,I_{\fk};M)$. 

As noted previously, we will often write simply $x \cdot c$ for $\gt_x c$.
Recall the projection $\fg \to J$, $x \mapsto x^+$.

\begin{Prop} \label{p.liealgaction}
There is a Lie algebra action of $\fg$ on $H^q(I,I_{\fk};M)$, defined by the formula
$ x \cdot [c] := [x^+ \cdot c]$ (where $x^+ \cdot c$ means $\gt_{x^+} c$).
Since $I$ acts trivially, this formula defines an action of $\fg/I$ on $H^q(I,I_{\fk};M)$.
\end{Prop}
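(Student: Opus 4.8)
The plan is to verify in turn the three things asserted: that the formula $x\cdot[c]:=[\gt_{x^+}c]$ is well-defined on cohomology, that it is a Lie algebra action, and that $I$ acts trivially. Well-definedness and linearity are immediate from what precedes the statement: for any $x\in\fg$ the element $x^+$ lies in $J$, so by Lemma~\ref{l.Jpreserve} together with Lemma~\ref{l.dcommute} the operator $\gt_{x^+}$ preserves both $Z^q(I,I_{\fk};M)$ and $B^q(I,I_{\fk};M)$ and hence descends to a well-defined operator on $H^q(I,I_{\fk};M)$; linearity in $x$ holds because $x\mapsto x^+$ and $x^+\mapsto\gt_{x^+}$ are both linear. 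Triviality of the $I$-action is easier still: if $x\in I$ then $\pi|_I=\mathrm{id}$ forces $x^*=x$, hence $x^+=0$ and $\gt_{x^+}=0$. Thus the only substantive point, and the step I expect to be the main obstacle, is the bracket identity
\[
[x,y]\cdot[c] = x\cdot(y\cdot[c]) - y\cdot(x\cdot[c]).
\]

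To attack it, I would first use that $\gt$ is the Lie-derivative representation of $\fg$ on $C^q(I;M)=\Hom(\gL^q I,M)$, coming from the adjoint action on $I$ (which makes sense since $I$ is an ideal) tensored with the action on $M$; in particular $[\gt_u,\gt_v]=\gt_{[u,v]}$ for all $u,v\in\fg$. Applying this with $u=x^+$, $v=y^+$ and evaluating on a relative cocycle $c$ gives $x\cdot(y\cdot[c]) - y\cdot(x\cdot[c]) = [\gt_{[x^+,y^+]}c]$. Next I would compare $[x^+,y^+]$ with $[x,y]^+$: expanding $[x,y]$ from $x=x^*+x^+$, $y=y^*+y^+$, the three cross terms $[x^*,y^*]$, $[x^*,y^+]$, $[x^+,y^*]$ all lie in $I$ because $I$ is an ideal, hence have zero $J$-component, so $[x,y]^+=[x^+,y^+]^+$. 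Setting $w:=[x^+,y^+]^*=[x^+,y^+]-[x,y]^+\in I$, this reduces the bracket identity to the single claim $[\gt_w c]=0$ in $H^q(I,I_{\fk};M)$.

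The heart of the matter is this vanishing, and it is exactly the step where $I_{\fk}\neq 0$ would otherwise cause trouble. Since $c$ is a cocycle, \eqref{e.gt} gives $\gt_w c = d(i_w c)$, so it suffices to show that $i_w c$ lies in the relative subcomplex $C^{q-1}(I,I_{\fk};M)$. For $y\in I_{\fk}$ the contraction condition $i_y(i_w c)=-i_w(i_y c)=0$ is immediate, while the identity $[\gt_y,i_w]=i_{[y,w]}$ (a direct computation from the definitions) gives $\gt_y(i_w c)=i_w(\gt_y c)+i_{[y,w]}c=i_{[y,w]}c$, since $\gt_y c=0$. It then remains to check $[y,w]=0$, and here Lemma~\ref{l.reductive} is decisive: because $J$ commutes with $I_{\fk}$ we have $[y,x^+]=[y,y^+]=0$, so the Jacobi identity gives $[y,[x^+,y^+]]=0$, and the $\fk$-equivariance of $\pi$ (Corollary~\ref{c.reductive}) yields $[y,w]=[y,\pi([x^+,y^+])]=\pi([y,[x^+,y^+]])=0$. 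Hence $i_w c$ is relative, $\gt_w c=d(i_w c)$ is a relative coboundary, and $[\gt_w c]=0$, which completes the bracket identity. The one delicate input is precisely the commuting of $J$ with $I_{\fk}$, which is what allows the $J$-part $x^+$ to carry the action that $x$ itself cannot.
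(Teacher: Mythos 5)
Your proof is correct and follows essentially the same route as the paper's: both reduce the bracket identity to showing $[\gt_u c]=0$ for the element $u=[x^+,y^+]-[x,y]^+\in I$, verify that $u$ commutes with $I_{\fk}$ (the paper via Lemma \ref{l.reductive} applied to $x^+,y^+,[x,y]^+$, you via the $\fk$-equivariance of $\pi$ and Jacobi), and then conclude $\gt_u c = d(i_u c)$ is a relative coboundary by checking $i_u c \in C^{q-1}(I,I_{\fk};M)$. Your packaging of the last step through the identity $[\gt_y,i_w]=i_{[y,w]}$ is a minor streamlining of the paper's direct evaluation of $i_a\gt_u c$, not a different argument.
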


\begin{proof}
We must check that the above definition is compatible with the Lie bracket, i.e., that
for $x,y \in \fg$, $c \in Z^q(I,I_{\fk};M)$, we have
\begin{equation} \label{e.lieaction1}
x \cdot (y \cdot [c]) - y \cdot (x \cdot [c]) = [x,y] \cdot [c].
\end{equation}
The left hand side is $[(x^+ \cdot y^+ - y^+ x^+ ) c] = [ [x^+,y^+] \cdot c]$.  The
right hand side is $[[x,y]^+ \cdot c]$.  Let $u = [x^+,y^+] - [x,y]^+$.  We must show
that $[u \cdot c] = 0$, that is, that $u \cdot c \in B^q(I,I_{\fk};M)$.

First, observe that $u \in I$.  Indeed,
$$
[x,y] = [x^* + x^+, y^* + y^+] = [x^* ,y^*  ] + [x^* ,y^+  ] + [x^+ ,y^*  ] + [x^+ ,y^+  ].
$$
On the right hand side, all terms but the last are in $I$; hence $ [x,y] - [x^+ ,y^+  ] \in I$.
On the other hand, $[x,y] - [x,y]^+ \in I$.  Therefore, $u =  ([x,y] - [x,y]^+) - ([x,y] - [x^+ ,y^+  ] ) \in I$.

Next, observe that $u$ commutes with $I_{\fk}$, as by
Lemma \ref{l.reductive}, $x^+, y^+$, and $[x,y]^+$ all commute with
$I_{\fk}$.    We conclude that $u \in Z_I (I_{\fk})$.

Since $dc = 0$, we have
$$
u \cdot c = ( d i_u c - i_u d c) = d i_u c.
$$
Thus, $u \cdot c \in B^q(I; M)$, and to show that
 $u \cdot c \in B^q(I,I_{\fk}:M)$, we must show that the element
$i_u c$ of $C^{q-1}(I;M)$ lies in the subspace $C^{q-1}(I,I_{\fk};M)$.  In other words,
we must show that for $a \in I_{\fk}$, we have $i_a i_u c = 0$ and $\gt_a i_u c = 0$.  The first
equation holds because $i_a i_u c = - i_u i_a c = 0$ as $i_a c = 0$.   To verify the second
equation, observe that
$$
\gt_a i_u c = (d i_a + i_a d) i_u c = i_a d i_u c = i_a \gt_u c,
$$
where in the second equality, we have used $i_a i_u c = 0$, and
in the third equality, we have used $dc = 0$.   If $y_2, \ldots, y_q \in I$,
then
\begin{eqnarray*}
(i_a \gt_u c)(y_2, \ldots, y_q) = (\gt_u c)(a,y_2, \ldots, y_q) = & u( c(a,y_2, \ldots, y_q) ) + 
c([a,u], y_2, \ldots, y_q)  \\ & + \sum_{i=2}^q c(a, y_2, \ldots, [y_i,a], \ldots, y_q)).
\end{eqnarray*}
This is $0$ because $i_a c = 0$ (so we get $0$ if any input to $c$ is equal to $a$)
and $[a,u] = 0$.  Hence $\gt_a i_u c = 0$.  We conclude that
$i_u c \in C^{q-1}(I,I_{\fk};M)$, so $\gt_u c = d i_u c \in B^q(I, I_{\fk};M)$, completing
the proof.
\end{proof}

\begin{Prop} \label{p.pullbackinvariant}
Let $j: (I,I_{\fk}) \to (\fg, \fk)$ denote the inclusion.  The pullback
$j^*: H^n(\fg, \fk;M) \to H^n(I,I_{\fk};M)$ has image in $H^n(I,I_{\fk};M)^{\fg/I}$.
\end{Prop}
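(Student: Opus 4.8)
The plan is to verify the invariance directly on cocycle representatives, using the cochain-level Cartan calculus. Recall that $j^*$ is simply restriction of cochains: for $f \in C^n(\fg, \fk; M)$ and $y_1, \dots, y_n \in I$ one has $(j^* f)(y_1, \dots, y_n) = f(y_1, \dots, y_n)$. Fix a cocycle $f \in Z^n(\fg, \fk; M)$ and an element $x \in \fg$. By Proposition~\ref{p.liealgaction} the $\fg/I$-action on the class $j^*[f] = [j^* f]$ is given by $x \cdot [j^* f] = [\gt_{x^+}(j^* f)]$, so it suffices to show that $\gt_{x^+}(j^* f)$ is a coboundary in the relative complex $C(I, I_{\fk}; M)$.

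The first step is to move the operator $\gt_{x^+}$ past the restriction. Because $I$ is an ideal, $[x^+, y] \in I$ for every $y \in I$, and a direct comparison of the defining formulas shows that $j^*$ commutes with $\gt_z$ for every $z \in \fg$, with $d$ (it is a cochain map), and with $i_a$ and $\gt_a$ for every $a \in I$. Hence $\gt_{x^+}(j^* f) = j^*(\gt_{x^+} f)$, and it is enough to exhibit $j^*(\gt_{x^+} f)$ as a coboundary. Working in the absolute complex $C(\fg; M)$, Cartan's formula \eqref{e.gt} together with $df = 0$ gives $\gt_{x^+} f = d(i_{x^+} f)$; applying $j^*$ and using that it commutes with $d$ yields $j^*(\gt_{x^+} f) = d(j^* i_{x^+} f)$.

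It remains to check that the primitive $j^* i_{x^+} f$ actually lies in the relative subcomplex $C^{n-1}(I, I_{\fk}; M)$, i.e.\ that it is annihilated by $i_a$ and $\gt_a$ for all $a \in I_{\fk}$. I would verify both conditions already in $C(\fg; M)$ for the cochain $i_{x^+} f$ and then restrict. The contraction condition is immediate: $i_a i_{x^+} f = -i_{x^+} i_a f = 0$, since $i_a f = 0$ for $a \in \fk$. For the Lie-derivative condition I would use the standard Cartan identity $[\gt_a, i_{x^+}] = i_{[a, x^+]}$ (a direct computation from the definitions), which gives $\gt_a i_{x^+} f = i_{x^+}(\gt_a f) + i_{[a, x^+]} f$. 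Here $\gt_a f = 0$ because $a \in \fk$, and---this is the crucial point---$[a, x^+] = 0$ because $a \in I_{\fk}$ and $x^+ \in J$ commute by Lemma~\ref{l.reductive}. Thus $\gt_a i_{x^+} f = 0$, and restricting (with $a \in I_{\fk} \subset I$, so that $j^*$ commutes with $i_a$ and $\gt_a$) shows $j^* i_{x^+} f \in C^{n-1}(I, I_{\fk}; M)$. Consequently $j^*(\gt_{x^+} f) = d(j^* i_{x^+} f) \in B^n(I, I_{\fk}; M)$, so $x \cdot j^*[f] = 0$ for every $x \in \fg$, which is the asserted invariance.

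The main obstacle is the $\gt_a$-invariance in the last step: this is exactly where the hypothesis that $\fk$ is reductive in $\fg$ enters, through the commutation $[I_{\fk}, J] = 0$ furnished by Lemma~\ref{l.reductive}. Without it the term $i_{[a, x^+]} f$ would not vanish, $i_{x^+} f$ would fail to descend to a relative cochain on $(I, I_{\fk})$, and this cohomological argument would break down.
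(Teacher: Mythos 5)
Your proof is correct and follows essentially the same route as the paper's: act on the class by $\gt_{x^+}$, use Cartan's formula with $dc=0$ to write $\gt_{x^+}c = d\,i_{x^+}c$, commute with restriction, and verify that $j^* i_{x^+}c$ lies in the relative subcomplex $C^{n-1}(I,I_{\fk};M)$ using $[I_{\fk},J]=0$ from Lemma~\ref{l.reductive}. The only (harmless) difference is presentational: you package the $\gt_a$-check via the Cartan identity $[\gt_a, i_{x^+}] = i_{[a,x^+]}$, whereas the paper verifies the same vanishing by direct evaluation on arguments.
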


\begin{proof}
Let $c \in C^n(\fg, \fk;M)$ be a cocycle.  We must show that $j^*[c]$ is $\fg/I$-invariant,
i.e.~ if $x \in \fg$, then $x \cdot (j^*[c]) = 0.$  By definition,
$$
x \cdot (j^*[c]) = [\gt_{x^+} j^* c].
$$
So we want to show that $\gt_{x^+} j^* c$ is the coboundary of an element in $C^{n-1}(I,I_{\fk};M)$.
We have
\begin{equation} \label{e.pullbackinvariant}
\gt_{x^+} j^* c = j^* \gt_{x^+}  c,
\end{equation}
provided the equation is interpreted correctly: $c \in C^n(\fg, \fk;M) \subset C^n(\fg;M)$,
but since we do not know that $C^n(\fg, \fk;M)$ is stable under $\gt_{x^+}$, we must
view $ \gt_{x^+}  c$ as an element of $C^n(\fg;M)$.  We have
$$
j^*  \gt_{x^+}  c = j^* (d i_{x^+} + i_{x^+} d) c = j^* d i_{x^+} c = d j^* i_{x^+} c.
$$
We know that $j^* i_{x^+} c$ is an element of $C^{n-1}(I;M)$.  To complete the proof,
we must show that
it lies in the subspace $C^{n-1}(I,I_{\fk};M)$.  Let $y \in I_{\fk}$.  Then
$i_y  j^* i_{x^+} c = - j^* i_{x^+} i_y c$, and this is zero
since $c \in C^n(\fg,\fk ;M)$.  Also, given $x_2, \ldots, x_n \in I$, we have
\begin{equation} \label{e.pullbackinvariant2}
(\gt_y j^* i_{x^+} c)(x_2, \ldots, x_n) = y \cdot( c(x^+, x_2, \ldots, x_n)) + \sum_{i \ge 2}
c(x^+, x_2, \ldots, [x_i, y], \ldots, x_n).
\end{equation}
Lemma \ref{l.reductive} implies that $[x^+, y] = 0$, so the right side of \eqref{e.pullbackinvariant2}
is unchanged if we add the term $c([x^+,y], x_2,  \ldots, x_n)$.  But if we add this term,
the right hand side of \eqref{e.pullbackinvariant2} is by definition equal to
$ (\gt_y c)(x^+, x_2, \ldots, x_n)$.  But $\gt_y c = 0$ since $c \in C^n(\fg, \fk;M)$.
We conclude that $j^* i_{x^+} c$ is in $C^{n-1}(I;M)$, completing
the proof.
\end{proof}

\section{The filtration and consequences} \label{s.filtration}
For simplicity, except for Section \ref{s.spectral}, we will now write
$C^n = C^n(\fg, \fk;M)$.  This space has a filtration (introduced by
Hochschild and Serre in the absolute setting): by definition,
$F_p C^n(\fg;M)$ consists of the subspace of $C^n(\fg;M)$ consisting of those $f$ which are zero
when $n-p +1$ of the inputs are in $I$.  In other words, $f(x_1, \ldots, x_n)$
can be nonzero only if at most $n-p$ of the $x_1, \ldots, x_n$ are
in $I$.  Given a subspace $V$ of $C^n(\fg;M)$, we define $F_p V = V \cap F_p C^n(\fg;M)$.
In particular, this defines $F_p C^n$.  As observed by Hochschild and Serre,
the Lie algebra cohomology differential
$d$ takes $F_p C^n(\fg;M)$ to $F_p C^{n+1}(\fg;M)$.   Hence $d$ takes $F_p C^n$ to $F_p C^{n+1}$.

We define $E_0^{pq} = F_p C^{p+q}/ F_{p+1}C^{p+q}$.  
This is the $0$-th page of the Hochschild-Serre spectral sequence (we recall
spectral sequence generalities in Section \ref{s.spectral}).
Our immediate goal is to describe $E_0^{pq}$.

Since $\f\fk/I_{\fk}$
acts on $\fg/I$, we can define the vector space
\begin{equation}
C^p(\fg/I, \fk/I_{\fk}; C^q(I, I_{\fk};M)) = \Hom_{\fk/I_{\fk}}(\gL^p \Big( \frac{\fg/I}{\fk/I_{\fk}} \Big), C^q(I, I_{\fk};M)).
\end{equation}
Note that
$$
 \frac{\fg/I}{\fk/I_{\fk}} \cong \fg /(I + \fk),
 $$
so
$$
C^p(\fg/I, \fk/I_{\fk}; C^q(I, I_{\fk};M)) = \Hom_{\fk/I_{\fk}}(\gL^p ( \fg /(I+\fk) ), C^q(I, I_{\fk};M)).
$$

\begin{Def} \label{d.rj}
Let $R_p$ denote the map $C^{p+q}(\fg;M) \to C^p(\fg; C^q(I;M))$ defined by
$$
(R_p f)(x_1, \ldots, x_p)(x_{p+1}, \ldots, x_{p+q}) = f(x_1, \ldots,x_{p+q})
$$
where $x_1, \ldots, x_p \in \fg$ and $x_{p+1}, \ldots, x_{p+q} \in I$.  Let
$S_p$ denote the restriction of $R_p$ to the subspace $C^{p+q} = C^{p+q}(\fg, \fk;M)$
of $C^{p+q}(\fg;M)$.  Let $r_p$ denote the restriction of
$R_p$ to $ F_p  C^{p+q}(\fg;M)$, and $s_p$ the restriction of
$S_p$ to $F_p C^{p+q}$.
\end{Def}

\begin{Lem} \label{l.restrict}
$S_p$ takes $C^{p+q}$ to $C^p(\fg, \fk;C^q(I;M))$.
\end{Lem}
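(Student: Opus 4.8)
The plan is to show that $S_p f$, which is a priori an element of $C^p(\fg; C^q(I;M))$, actually lands in the subspace $C^p(\fg, \fk; C^q(I;M))$ whenever $f \in C^{p+q} = C^{p+q}(\fg, \fk; M)$. By the definition of relative cochains, membership in $C^p(\fg, \fk; C^q(I;M))$ means two things for each $x \in \fk$: first, the contraction $i_x(S_p f)$ must vanish (so that $S_p f$ descends to $\gL^p(\fg/\fk)$), and second, $\gt_x(S_p f) = 0$, where the operators $i_x$ and $\gt_x$ on $C^p(\fg; C^q(I;M))$ are taken with respect to the $\fg$-action on the coefficient module $C^q(I;M)$ via $\gt_x$ (as discussed around equation \eqref{e.gt}, noting that since $I$ is an ideal the formula for $\gt_x$ makes sense on $C^q(I;M)$). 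First I would unwind both conditions by evaluating on arguments $x_1, \dots, x_{p-1} \in \fg$ and $x_{p+1}, \dots, x_{p+q} \in I$, translating each condition on $S_p f$ back into a statement about $f$ itself using the defining formula $(S_p f)(x_1, \dots, x_p)(x_{p+1}, \dots, x_{p+q}) = f(x_1, \dots, x_{p+q})$.

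For the contraction condition, I would compute that for $x \in \fk$,
\begin{equation*}
(i_x(S_p f))(x_1, \dots, x_{p-1})(x_{p+1}, \dots, x_{p+q}) = f(x, x_1, \dots, x_{p-1}, x_{p+1}, \dots, x_{p+q}) = (i_x f)(x_1, \dots, x_{p+q}),
\end{equation*}
which vanishes precisely because $f \in C^{p+q}(\fg, \fk; M)$ satisfies $i_x f = 0$. The $\gt_x$ condition is the more substantive of the two. Here I would carefully expand $\gt_x(S_p f)$ using its definition as $\gt_x(S_p f) = x \cdot (S_p f) - (S_p f)(x \cdot -)$, where the first term involves the action of $x$ on the coefficient space $C^q(I;M)$, itself given by $\gt_x$ acting on the inner $q$ arguments. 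The key point is that when I assemble all the resulting terms — the action of $x$ on the value $f(\dots)$, the brackets $[x, x_i]$ distributed over both the outer $p$ arguments and the inner $q$ arguments in $I$ — they recombine to reproduce exactly $(\gt_x f)(x_1, \dots, x_{p+q})$, which is zero since $f \in C^{p+q}(\fg, \fk; M)$.

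The main obstacle is bookkeeping: I must verify that the inner $q$ arguments stay in $I$ so that the expression $(S_p f)(\cdots)(\cdots)$ is well-defined at each intermediate step. This is where the hypothesis that $I$ is an \emph{ideal} is essential — for $x \in \fk \subset \fg$ and $x_i \in I$, the bracket $[x, x_i]$ remains in $I$, so replacing $x_i$ by $[x, x_i]$ in an inner slot keeps all inner arguments inside $I$ and the formula for $S_p f$ continues to apply. I would organize the verification by splitting the sum $\gt_x(S_p f)$ into the contribution where $x$ brackets against the outer arguments (matching the corresponding part of $\gt_x f$) and the contribution where $x$ acts on the inner $C^q(I;M)$-value (matching the remaining part), and then observe that their sum is the full $\gt_x f$ evaluated on all $p+q$ arguments. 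Since no new conceptual ingredient beyond Lemma \ref{l.dcommute} and the ideal property is needed, the proof is essentially a careful unwinding of definitions, and I expect it to be short once the two conditions are spelled out.
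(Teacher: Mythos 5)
Your proposal is correct and takes essentially the same route as the paper: both verify the $\fk$-equivariance $\gt_z(S_p f)=0$ by evaluating $z\cdot[(S_p f)(\ux)]$ and $(S_p f)(z\cdot\ux)$ on $\uy\in I^q$ and observing that the resulting identity is exactly $(\gt_z f)(\ux,\uy)=0$, using the ideal property of $I$ to keep the inner arguments in $I$. The only difference is that you also spell out the (immediate) vanishing of $i_z(S_p f)$, which the paper's proof leaves implicit.
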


\begin{proof}
We must show that if $f$ is $\fk$-invariant, then so is
$S_p f$.
Let
$z \in \fk$, $x_1, \ldots, x_p \in \fg$, and $y_1, \ldots, y_q \in I$.
Now, $\fk$-equivariance means that if $z \in \fk$ and $\ux \in {\fg}^p$,
then
\begin{equation} \label{e.slf3}
z \cdot [(S_p f)(\ux) ] = (S_p f)(z \cdot \ux).
\end{equation}
To verify this equation, we must evaluate both sides
at $\uy \in {\fg}^q$.  The left side gives
\begin{equation} \label{e.slf4}
-  f(\ux, z \cdot \uy) + z \cdot f(\ux, \uy),
\end{equation}
while the right side gives
\begin{equation} \label{e.slf5}
f( z \cdot \ux, \uy).
\end{equation}
The expressions \eqref{e.slf4} and \eqref{e.slf5} are equal because
$f$ is $\fk$-equivariant.  Hence $S_p f$ is $\fk$-equivariant,
as desired.
\end{proof}


We clarify some notation regarding interior products.
Note that if $f \in C^p(\fg, C^q(\fg;M))$ and $y\in \fg$, then $i_y f \in C^{p-1}(\fg, C^q(\fg;M))$.
Given a $p$-tuple $\underline{x} = (x_1, \ldots, x_j)$,
let $i_{\underline{x}}:   C^p(\fg; C^q(\fg;M)) \to C^q(\fg;M)$ denote
the evaluation map.  In this case, if $f \in C^p(\fg; C^q(\fg;M))$
then $i_y i_{\ux} f \in C^{q-1}(\fg;M)$, and $\gt_y i_{\ux} f \in C^{q-1}(\fg;M)$.
We will use the same notation for maps restricted
to subspaces of $C^{p+q}(\fg;M)$ and $C^p(\fg; C^q(\fg;M))$.

The following lemma generalizes to the relative situation a result
from \cite{HS:53}; the arguments are adapted from there.
The lemma implies that
$s_p$ induces an isomorphism
$$
E_0^{pq} = F_p C^{p+q} / F_{p+1} C^{p+q} \cong  C^p(\fg/I, \fk/I_{\fk}; C^q(I, I_{\fk};M)).
$$
This is our desired description of $E_0^{pq}$.

\begin{Lem} \label{l.sj}
\begin{enumerate}
\item The kernel of $s_p$ is $F_{p+1} C^{p+q}$. 
\item  $s_p$ maps $F_p C^{p+q}$ 
to $C^p(\fg/I, \fk/I_{\fk}; C^q(I, I_{\fk};M))$.  
\item The image of $s_p$ is all of $C^p(\fg/I, \fk/I_{\fk}; C^q(I, I_{\fk};M))$. 
\end{enumerate}
\end{Lem}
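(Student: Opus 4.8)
The plan is to establish the three assertions in order, treating (1) and (2) as consequences of the alternating property together with the defining vanishing condition of the filtration, and reserving the real work for the surjectivity in (3), where a preimage must be exhibited. For (1), note that $s_p f = 0$ means exactly that $f(x_1,\ldots,x_p,y_1,\ldots,y_q)=0$ whenever $y_1,\ldots,y_q \in I$ and $x_1,\ldots,x_p\in\fg$ are arbitrary. The inclusion $F_{p+1}C^{p+q}\subseteq\ker s_p$ is immediate, since an element of $F_{p+1}C^{p+q}$ vanishes as soon as $q$ of its inputs lie in $I$. Conversely, if $f\in F_p C^{p+q}$ with $s_p f=0$, I would show $f$ vanishes whenever at least $q$ inputs lie in $I$: for exactly $q$ such inputs the alternating property moves them into the last $q$ slots up to sign, whereupon $s_p f=0$ forces the value to be $0$; for $q+1$ or more inputs in $I$, $f$ vanishes already because $f\in F_p C^{p+q}$. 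Hence $\ker s_p = F_{p+1}C^{p+q}$.

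For (2), Lemma \ref{l.restrict} already gives $s_p f\in C^p(\fg,\fk;C^q(I;M))$, so it remains to sharpen the outer arguments and the inner values. That $s_p f$ descends to $\gL^p(\fg/(I+\fk))$ follows by computing, for $y\in I$, the outer contraction $(i_y s_p f)(x_2,\ldots,x_p)$, whose inner cochain is $f(y,x_2,\ldots,x_p,y_1,\ldots,y_q)$; this has $q+1$ inputs in $I$ and so vanishes since $f\in F_p C^{p+q}$, while the $\fk$-directions are already killed. To see the inner values lie in $C^q(I,I_{\fk};M)$, I would check the inner $i_a$ and $\gt_a$ for $a\in I_{\fk}$: the contraction $i_a$ vanishes because $a\in\fk$ forces $f$ to vanish on that insertion, and the inner $\gt_a$ reduces, using $\gt_a f=0$ and the $\fk$-equivariance of the bracket, to a sum of terms $f(\ldots,[a,x_j],\ldots,y_1,\ldots,y_q)$ with $[a,x_j]\in I$, again giving $q+1$ inputs in $I$ and hence $0$. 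The crucial ingredient throughout is the $F_p$-vanishing. Since $I_{\fk}$ then acts trivially both on the coefficient module $C^q(I,I_{\fk};M)$ and on $\fg/(I+\fk)$, the $\fk$-equivariance of $s_p f$ descends to $\fk/I_{\fk}$-equivariance, placing $s_p f$ in $C^p(\fg/I,\fk/I_{\fk};C^q(I,I_{\fk};M))$.

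The main obstacle is (3), which requires an explicit preimage and is where arbitrary characteristic forbids averaging. Given $\phi\in C^p(\fg/I,\fk/I_{\fk};C^q(I,I_{\fk};M))$, I would use the $\fk$-module decomposition $\fg=I\oplus J$ of Lemma \ref{l.reductive}, together with its $\fk$-equivariant projections $z\mapsto z^*\in I$ and $z\mapsto z^+\in J$ (Corollary \ref{c.reductive}), to define
\[
f(z_1,\ldots,z_{p+q})=\sum_{S}\epsilon(S)\,\phi(z_{s_1}^+,\ldots,z_{s_p}^+)(z_{t_1}^*,\ldots,z_{t_q}^*),
\]
where $S=\{s_1<\cdots<s_p\}$ ranges over the $p$-element subsets of $\{1,\ldots,p+q\}$, the complement is $\{t_1<\cdots<t_q\}$, and $\epsilon(S)$ is the shuffle sign; this $f$ is automatically alternating, and no factorial normalization is needed. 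Here $z^+\in J$ is fed to $\phi$ through its class in $\fg/(I+\fk)$, which is legitimate by the cochain conventions of Section \ref{s.prelim}.

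I would then verify the three properties of $f$. That $f\in F_p C^{p+q}(\fg;M)$ is forced by the shape of the formula: if $q+1$ inputs lie in $I$ then at most $p-1$ inputs have nonzero $J$-component, so no subset $S$ of size $p$ can avoid a vanishing factor $z_{s_i}^+=0$, and every summand dies. That $s_p f=\phi$ follows by evaluating on $x_1,\ldots,x_p\in\fg$ and $y_1,\ldots,y_q\in I$: since $y_j^+=0$, only the term $S=\{1,\ldots,p\}$ survives, with sign $+1$, returning $\phi(\bar{x}_1,\ldots,\bar{x}_p)(y_1,\ldots,y_q)$. The heart of the matter is showing $f\in C^{p+q}(\fg,\fk;M)$. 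For $a\in\fk$, writing $a=a^*+a^+$ with $a^*\in I_{\fk}$ and $a^+\in J_{\fk}\subseteq\fk$, the contraction $i_a f=0$ because any summand placing $a$ in an outer slot carries the factor $\phi(\ldots,a^+,\ldots)=0$ (as $a^+\in I+\fk$), while any summand placing $a$ in an inner slot carries an evaluation of a cochain in $C^q(I,I_{\fk};M)$ on $a^*\in I_{\fk}$, which is $0$. The remaining identity $\gt_a f=0$ is the one genuine computation: expanding $\gt_a f$ and passing the bracket through the projections via $[a,z]^*=[a,z^*]$ and $[a,z]^+=[a,z^+]$, the terms reorganize into the expression $\gt_{\bar{a}}\phi=0$ coming from the $\fk/I_{\fk}$-invariance of $\phi$. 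I expect the bookkeeping in this last step to be the heaviest part, and it is precisely the point at which the relative setting is more delicate than the absolute one.
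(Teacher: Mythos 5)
Your proposal is correct and follows essentially the same route as the paper: the same filtration argument for (1), the same two-step verification ($i_a$ and $\gt_a$ on inner values, using the $F_p$-vanishing and $\gt_a f=0$) for (2), and for (3) your shuffle-sum formula is precisely the paper's lift $\tilde g$ from \eqref{e.tildemap}, since feeding $z^+$ into the outer slots agrees with feeding $z$ itself (they differ by $z^*\in I$, which $\phi$ kills), with the same checks that $f\in F_pC^{p+q}(\fg,\fk;M)$ and $s_pf=\phi$, the equivariance bookkeeping via $[a,z]^*=[a,z^*]$ matching the paper's use of Corollary \ref{c.reductive}, and the alternating property handled as in Remark \ref{r.alternating}.
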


\begin{proof}
(1) The statement about the kernel of $s_p$ is immediate from the definition of the
filtration.  

(2) Suppose $f \in F_p C^{p+q}$.  We want to show that
$s_p(f) \in  C^p(\fg/I, \fk/I_{\fk}; C^q(I, I_{\fk};M))$.  Since $s_p$ is the restriction
of $S_p$ and of $r_p$, we know that $s_p f $ lies in the intersection
of the images of $S_p$ and of $r_p$, i.e., in
$$
C^p(\fg, \fk;C^q(I;M)) \cap C^p(\fg/I; C^q(I;M)).
$$
To prove (2), it will be enough to show that for
$\ux \in {\fg}^p$, we have $i_{\ux} f \in C^q(I, I_{\fk};M)$, for then
the notions of $\fk$-equivariance
and $\f\fk/I_{\fk}$-equivariance will coincide.  

To show that  $i_{\ux} f \in C^q(I, I_{\fk};M)$,
we must verify that for $z \in I_{\fk}$,
we have $$i_z  i_{\ux} s_p f = 0$$
and
$$\gt_z  i_{\ux}  s_p f =0.
$$
The first equation holds because if an element of $\fk$ is an argument
of $f$, the result is $0$.  For the second equation, since $f \in C^{p+q}(\fg, \fk;M)$,
and $z \in I_{\fk} \subset \fk $, we have $\gt_z f = 0$.  By definition of $\gt_z$,
this means that for $\ux \in {\fg}^p$ and $\uy \in I^q$, we have
$$
z \cdot (f(\ux, \uy)) = f(z \cdot \ux, \uy) + f(\ux, z \cdot \uy).
$$
The first term on the right is $0$ because it is a sum of terms, each of which
has at least $q+1$ inputs from $I$.   Thus, we obtain
$$
[ \gt_z  i_{\ux}  s_p f ] (\uy)= z \cdot (f(\ux, \uy)) - f(\ux, z \cdot \uy) = 0.
$$
As $\uy$ is arbitrary, we see that $\gt_z i_{\ux}  s_p f = 0$,
as desired.

(3) We now prove that $s_p$ is surjective.  We must show that if
$g \in C^p(\fg/I, \fk/I_{\fk}; C^q(I, I_{\fk};M))$, then there exists
$f \in F_p C^{p+q}$ such that $s_p f = g$.

As in \cite{HS:53}, define
$f \in C^{p+q}(\fg;M)$ by
\begin{equation} \label{e.tildemap}
f(x_1, \ldots, x_{p+q}) = \sum_{\gs} (-1)^{\gs} g(x_{\gs(1)}, \ldots, x_{\gs(p)}) 
(x_{\gs(p+1)}^*, \ldots, x_{\gs(p+q)}^*).
\end{equation}
Here, the map $x \mapsto x^*$ is as in Corollary 
\ref{c.reductive}.  The sum is over all
permutations $\gs \in \Sigma_{p+q}$ such that
$\gs(1) < \gs(2) < \cdots < \gs(p)$ and
$\gs(p+1) < \gs(p+2) < \cdots < \gs(p+q)$.  (We have asserted that
$f \in C^{p+q}(\fg;M)$; this amounts to checking that $f$ is alternating---see
Remark \ref{r.alternating} below.)
We must verify that $f \in C^{p+q}(\fg, \fk;M)$, i.e., that if $x \in \fk $,
then $i_x f = 0$ and $\gt_x f = 0$.  To show the first equation, it
suffices to show that if $x_1 \in \fk $, then $f(x_1, \ldots, x_{p+q}) = 0$.
This holds because either $\gs(1) = 1$, in which case we
get $0$ as $i_{x_1} g =0$; or $\gs(p+1) = 1$, in which case
$x_1^* \in I_{\fk}$ by Corollary \ref{c.reductive}, so we get $0$
because $i_{\ux} g \in C^q(I, I_{\fk};M)$.

Now we show that $\gt_x f = 0$.   This says that for $z \in \fk $
and $x_1, \ldots, x_{p+q} \in \fg$, we have
\begin{equation} \label{e.fequiv}
\sum_{i = 1}^{p+q} f(x_1 \ldots, [z, x_i], \ldots, x_{p+q} ) = z \cdot (f(x_1, \ldots, x_{p+q}) ).
\end{equation}
By assumption, $g$ is $\f\fk/I_{\fk}$-equivariant
(i.e., $\fk$-invariant, and $I_{\fk}$ acts trivially).   
Thus, for $z \in \fk $ and $x_1, \ldots, x_p \in \fg$, we have
\begin{equation} \label{e.gequiv1}
\sum_{i=1}^p g(x_1, \ldots, [z, x_i], \ldots, x_p) = z \cdot (g(x_1, \ldots, x_p ) ).
\end{equation}
Given $(x_{p+1}, \ldots, x_{p+q}) \in {\fg}^q$, we have
$(x_{p+1}^*, \ldots, x_{p+q}^*) \in I^q$, to which we can apply
both sides of the preceding equation.  Upon rearranging,
by definition of the $\fk$-action on $C^q(I, I_{\fk};M)$, we obtain
\begin{eqnarray*} 
\sum_{i=1}^p g(x_1,\ldots, [z, x_i], \ldots, x_p) (x_{p+1}^*, \ldots, x_{p+q}^*) 
+ \sum_{j=p+1}^{p+q} g(x_1, \ldots, x_p)(x_{p+1}^*\ldots, [z, x_j^*], 
\ldots , x_{p+q}^*) \\
 = z \cdot \big( g(x_1, \ldots, x_p) (x_{p+1}^*, \ldots, x_{p+q}^*) \big) .
\end{eqnarray*}
Now, $[z, x_j^*] = [z, x_j]^*$ by Corollary \ref{c.reductive}.
Hence
\begin{eqnarray*}
\sum_{\gs} (-1)^{\gs} \big( \sum_{i=1}^p g(x_{\gs(1)},  \ldots, [z, x_{\gs(i)}], \ldots,  x_{\gs(p)}) 
(x_{\gs(p+1)}^*, \ldots, x_{\gs(p+q)}^*) \\
+ \sum_{j=p+1}^{p+q} g(, \ldots, x_{\gs(p)})
(x_{\gs(p+1)}^*\ldots, [z, x_{\gs(j)}]^*, 
\ldots , x_{\gs(p+q)}^*)  \big) \\
= \sum_{\gs} (-1)^{\gs} z \cdot \big( g(x_{\gs(1)}, \ldots, x_{\gs(p)}) 
(x_{\gs(p+1)}^*, \ldots, x_{\gs(p+q)}^*) \big).
\end{eqnarray*}
This is exactly the equation we obtain by taking the equation
\eqref{e.fequiv} of $\fk$-equivariance of $f$ and expressing $f$
in terms of $g$.  This proves \eqref{e.fequiv}, so $f$ is
$\fk$-equivariant.  We conclude that $f \in C^{p+q}(\fg, \fk;M)$, as desired.

Next, observe that $f \in F_p C^{p+q}(\fg, \fk;M)$.  The reason is that
if more than $q$ of the arguments of $f$ are in $I$, then
for any permutation $\gs$,
at least one of the $x_{\gs(1)}, \ldots, x_{\gs(p)}$ must be in $I$,
in which case 
$g(x_{\gs(1)}, \ldots, x_{\gs(p)}) (x_{\gs(p+1)}^*, \ldots, x_{\gs(p+q)}^*) = 0$.

Finally, we must verify that $s_p f = g$.  For this, suppose
that $x_1, \ldots, x_p$ are in $\fg$ and $x_{p+1}, \ldots, x_{p+q}$ are
in $I$.  Then in the definition of $f$, the only permutation that
contributes is the identity permutation, and we see that
$$
f(x_1, \ldots, x_{p+q}) = g(x_1, \ldots, x_p)(x_{p+1}, \ldots, x_{p+q}),
$$
so $s_p f = g$, as desired.
\end{proof}

\begin{Rem} \label{r.alternating}
To show that $f$ is alternating, it suffices to show that if
$x_i = x_{i+1}$ for some $i$, then $f(x_1, \ldots, x_{p+q}) = 0$.
Suppose $\sigma \in \Sigma_{p+q}$ is a permutation
contributing to the sum \eqref{e.tildemap}, and suppose that
$\sigma(a) = i$, $\sigma(b) = i+1$.  If $a$ and $b$ are both $\le p$
or both $\ge p+1$, then $g(x_{\gs(1)}, \ldots, x_{\gs(p)}) 
(x_{\gs(p+1)}^*, \ldots, x_{\gs(p+q)}^*) = 0$ by the alternating properties of $g$.
Otherwise, the contribution 
to \eqref{e.tildemap} from $\sigma$ is the negative of the contribution from
 $\tau \sigma$, where $\tau$
is the transposition $(i \ i+1)$.  We conclude that $f(x_1, \ldots, x_{p+q}) = 0$, as desired.
\end{Rem}


\begin{Def} \label{d.tildemap}
For later use, we will denote by $g \mapsto \tilde{g}$ the map
$$
C^p(\fg/I, \fk/I_{\fk}; C^q(I, I_{\fk};M)) \to F_p C^{p+q}
$$
constructed
in the preceding proof; that is, $\tilde{g}$ is the element $f$
defined in \eqref{e.tildemap}.  The preceding
proof shows that $s_p \tilde{g} =  S_p(\tilde{g}) = g$.
\end{Def}

\section{Some formulas involving differentials} \label{s.differentials}
We give here some facts involving various Lie algebra
cohomology differentials, which will be used in Section \ref{s.hochserre}
when we calculate
the differentials in the spectral sequence.

We let $C^{*,*}=\oplus_{p, q \ge 0} C^{p,q}$, where $C^{p,q}=C^p(\fg;C^q(I;M)$.
Recall from Definition \ref{d.rj} the map
$R_p: C^{p+q}(\fg;M) \to C^{p,q}$.
There are two differentials
on $C^{*,*}$: first, there is
the vertical differential 
$d_v: C^{p,q}\to C^{p,q+1}$.
This is defined as $d_v f = d \circ f$ for $f\in C^p(\fg; C^q(I;M))$,
 where the $d$ on the right hand side of
the equation is the differential $C^q(I;M) \to C^{q+1}(I;M)$.  
Second, there is the horizontal differential
$d_h: C^{p,q} \to C^{p+1,q}$
(this is the Lie algebra
cohomology differential $d:C^p(\fg; C^q(I;M)) \to C^{p+1}(\fg; C^q(I;M))$,
 with $C^q(I;M)$ playing the role of the module;
note that $x \in \fg$ acts on $C^q(I;M)$ using the action $\gt_x$
defined above).

\begin{Lem} \label{l.dhdvcommute}
The differentials $d_h$ and $d_v$ (on $C(\fg, C(I;M))$) commute.
\end{Lem}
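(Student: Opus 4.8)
The plan is to recognize the vertical differential as post-composition by a single map of $\fg$-modules and then exploit the fact that the horizontal (Lie algebra cochain) differential is functorial in its coefficients. Concretely, $d_v$ acts on $f \in C^{p,q} = C^p(\fg; C^q(I;M))$ by $d_v f = d \circ f$, where $d\colon C^q(I;M) \to C^{q+1}(I;M)$ is the Lie algebra cohomology differential of the complex $C(I;M)$. By Lemma \ref{l.dcommute}, this $d$ commutes with the $\fg$-action $\gt_x$ on $C(I;M)$, i.e.\ it is a morphism of $\fg$-modules. Since $d_h$ is built entirely from this $\gt$-action on the coefficient module $C^q(I;M)$ together with the bracket in $\fg$, post-composing by a $\fg$-module morphism should commute with $d_h$; our task is to make this precise at the cochain level.

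First I would unwind both composites on $x_0, \ldots, x_p \in \fg$. Using $d_v f = d \circ f$ and the definition of $d_h$ with coefficients in $C^q(I;M)$,
$$ (d_h d_v f)(x_0, \ldots, x_p) = \sum_{i \le p} (-1)^i \gt_{x_i}\big( d(f(x_0, \ldots, \hat{x}_i, \ldots, x_p)) \big) + \sum_{i<j} (-1)^{i+j} d\big( f([x_i,x_j], x_0, \ldots, \hat{x}_i \ldots \hat{x}_j \ldots, x_p) \big). $$
On the other side, $d_v d_h f = d \circ (d_h f)$; because $d$ is a linear map it distributes over the sum defining $d_h f$, so the bracket terms come out verbatim identical to those above, while the first family of terms becomes $\sum_{i \le p} (-1)^i d\big(\gt_{x_i}(f(x_0, \ldots, \hat{x}_i, \ldots, x_p))\big)$. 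Comparing the two expressions term by term, the only discrepancy is the order of $d$ and $\gt_{x_i}$ in the first sum.

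The step to highlight is therefore the reconciliation of this first sum, which holds precisely because $\gt_{x_i} \circ d = d \circ \gt_{x_i}$ on $C^q(I;M)$ for each $x_i \in \fg$ — exactly Lemma \ref{l.dcommute}. The bracket terms need no hypothesis, since there $d_h$ alters only the \emph{inputs} and the coefficient value has $d$ applied to it in both orders. I do not expect a genuine obstacle here; the one point requiring care is notational, namely keeping straight that the action appearing in $d_h$ is the $\gt$-action on the coefficient module $C^q(I;M)$, so that the commutation invoked is the one supplied by Lemma \ref{l.dcommute} and nothing more. This reflects, at the level of cochains, the general principle that $C^\bullet(\fg; -)$ is functorial in the coefficient module, so that a morphism of $\fg$-modules such as $d$ induces a chain map commuting with $d_h$.
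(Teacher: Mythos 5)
Your proof is correct and follows essentially the same route as the paper: both hinge on Lemma \ref{l.dcommute}, which makes the differential $d\colon C^q(I;M)\to C^{q+1}(I;M)$ a $\fg$-module map, so that post-composition by $d$ (which is $d_v$) commutes with the Lie algebra cohomology differential $d_h$. The only difference is presentational --- the paper invokes the functoriality of $C(\fg;-)$ in the coefficient module abstractly, while you verify that chain-map property explicitly by unwinding both composites at the cochain level, which is exactly the computation the paper's citation encapsulates.
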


\begin{proof}
Any $\fg$-module map
$V \to W$ induces a map of complexes $C(\fg;V) \to C(\fg;W)$ which is compatible with
the Lie algebra cohomology differential.
By Lemma \ref{l.dcommute},
the differential $C^q(I;M) \to C^{q+1}(I;M)$ is a $\fg$-module map.  The
induced map $C(\fg;C^q(I;M)) \to C(\fg;C^{q+1}(I;M))$ is what we have denoted $d_v$;
the compatibility with the Lie algebra cohomology differential (with $\fg$ playing the role
of Lie algebra) amounts to the commutativity of $d_h$ and $d_v$.  
\end{proof}

\begin{Lem} \label{l.dstable}
The space $C(\fg, \fk;C(I,M))$ (viewed as a subspace
of $C(\fg,C(I,M))$) is stable
under $d_h$ and $d_v$.  Moreover, the spaces
$$
C(\fg/I, \fk/I_{\fk}; C(I, I_{\fk}; M)) \subset C(\fg, \fk;C(I,I_{\fk};M))
$$
of $C(\fg, \fk;C(I,M))$
are stable under $d_v$.
\end{Lem}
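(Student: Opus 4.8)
The plan is to reduce every claim to a single observation, already used in the proof of Lemma \ref{l.dhdvcommute}: a $\fg$-module map $\phi : V \to W$ induces $\phi_* : C^p(\fg;V) \to C^p(\fg;W)$, $\phi_* f = \phi \circ f$, which commutes with the outer operators $i_x$ and $\gt_x$ for every $x \in \fg$, and hence carries $C^p(\fg,\fk;V)$ into $C^p(\fg,\fk;W)$ and, more generally, preserves both the condition of $\fk$-invariance and the condition of factoring through $\fg/I$. I would verify this commutation directly from the definitions of $i_x$ and $\gt_x$ together with the $\fg$-equivariance of $\phi$.

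For the first assertion, note that $d_h$ is literally the Lie algebra cohomology differential $d : C^p(\fg;V) \to C^{p+1}(\fg;V)$ with coefficients in the $\fg$-module $V = C^q(I;M)$ (where $x \in \fg$ acts by $\gt_x$). Since $C(\fg,\fk;V)$ is a subcomplex of $C(\fg;V)$ for any $\fg$-module $V$ (the well-known fact recalled in Section \ref{s.prelim}), $d_h$ preserves $C(\fg,\fk;C(I;M))$. For $d_v$, Lemma \ref{l.dcommute} tells us that the inner differential $d : C^q(I;M) \to C^{q+1}(I;M)$ is a $\fg$-module map, and $d_v$ is precisely the induced map $d_*$; the observation above then shows that $d_v$ carries $C^p(\fg,\fk;C^q(I;M))$ into $C^p(\fg,\fk;C^{q+1}(I;M))$.

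For the second assertion only $d_v$-stability is needed, and the one new ingredient is to check that the coefficient space $C^q(I,I_{\fk};M)$ is a $\fk/I_{\fk}$-module that the inner $d$ carries into $C^{q+1}(I,I_{\fk};M)$. First, $C(I,I_{\fk};M)$ is a subcomplex of $C(I;M)$ (the same well-known fact applied to the pair $(I,I_{\fk})$), so the inner $d$ respects the relative subspace. Next, for $x \in \fk$ I would write $x = x^* + x^+$ with $x^* = \pi(x) \in I_{\fk}$ (Corollary \ref{c.reductive}) and $x^+ \in J_{\fk} \subset J$: then $\gt_{x^*}$ annihilates $C^q(I,I_{\fk};M)$ by the defining conditions of the relative cochains, while $\gt_{x^+}$ preserves it by Lemma \ref{l.Jpreserve}; hence $\gt_x$ preserves $C^q(I,I_{\fk};M)$, the action factors through $\fk/I_{\fk}$, and the inner $d$ is $\fk$-equivariant there by Lemma \ref{l.dcommute}. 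Applying the induced-map observation once more, $d_v = d_*$ then carries $C^p(\fg,\fk;C^q(I,I_{\fk};M))$ into $C^p(\fg,\fk;C^{q+1}(I,I_{\fk};M))$; and since $d_*$ commutes with $i_y$ for $y \in I$, it also preserves the condition $i_y f = 0$ that cuts out the subspace $C^p(\fg/I,\fk/I_{\fk};C^q(I,I_{\fk};M))$. This gives $d_v$-stability of both spaces.

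The step I expect to require the most care is the $\fk/I_{\fk}$-module structure on $C^q(I,I_{\fk};M)$ used in the second part: one must split $x \in \fk$ along $\fg = I \oplus J$ and treat the $I_{\fk}$-part (through the annihilation conditions defining relative cochains) and the $J_{\fk}$-part (through Lemma \ref{l.Jpreserve}) separately. This is also precisely why $d_h$-stability is \emph{not} claimed in the second part: $d_h$ uses the full $\fg$-action $\gt_x$ on the coefficients, and for $x \in \fg$ outside $J$ this action need not preserve $C^q(I,I_{\fk};M)$, so the coefficient space is not a $\fg$-submodule and the subcomplex argument that worked for $d_h$ in the first part breaks down.
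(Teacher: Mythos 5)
Your proof is correct and takes essentially the same route as the paper: $d_h$-stability via the subcomplex property of relative cochains, and $d_v$-stability via Lemma \ref{l.dcommute} together with the induced-map-on-coefficients observation (the same device the paper uses to prove Lemma \ref{l.dhdvcommute}). The paper dismisses the second assertion with ``the remaining assertions follow easily,'' and your spelled-out version --- the decomposition $x = x^* + x^+$ with Corollary \ref{c.reductive} and Lemma \ref{l.Jpreserve} giving the $\fk/I_{\fk}$-module structure on $C^q(I,I_{\fk};M)$, plus commutation of $d_v$ with $i_y$ for $y \in I$ --- is exactly the intended argument.
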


\begin{proof}
The space $C(\fg, \fk;C(I,M))$ is stable under $d_h$ because it is a relative Lie
algebra cohomology complex (with
$C(I;M)$ playing the role of module).   To verify stability under 
$d_v$, let $f \in C^p(\fg, \fk;C^q(I;M))$, and $z \in \fk $.  We must show two things.
First, for $z$ in $\fk$,
$d_v f \in C^p(\fg; C^{i+1}(I;M))$ is annihilated by $i_z$: this follows because
$d_v f = d \circ f$, and $f$ vanishes when any input is in $\fk$.  Second,
$d_v f: {\fg}^p \to C^q(I;M)$ is $\fk$-equivariant.  Indeed, suppose $z \in \fk $ and
$\ux \in {\fg}^p$.  Writing the action of $z$ on $C(I;M)$ as $\gt_z$,
we want to show that
$$
(d_v f)(z \cdot \ux) = \gt_z ( (d_v f)(\ux)).
$$
By definition, $(d_v f)(z \cdot \ux) = d (f( z \cdot \ux))$.
Since $f$ is $\fk$-equivariant, this equals
$d(\gt_z(f(\ux)))$.  By Lemma \ref{l.dcommute},
$d$ commutes with $\gt_z$.  Hence $C(\fg, \fk;C(I;M))$
is stable under $d_v$.  The remaining assertions follow
easily.
\end{proof}


The following lemma is essentially given in \cite{HS:53}, so we omit
the proof.

\begin{Lem} \label{l.differential}
Let $f \in C^{p+q}(\fg;M)$.  Then
\begin{equation} \label{e.differential1}
R_{p+1} df = d_h (R_p f) + (-1)^{p+1} d_v (R_{p+1}f).
\end{equation}
The analogous formula holds with $R$ replaced by $S$.
\end{Lem}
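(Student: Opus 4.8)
The plan is to verify the identity directly on cochains. Both sides lie in $C^{p+1}(\fg;C^q(I;M))$, so it suffices to evaluate them on arguments $x_0, \ldots, x_p \in \fg$ and $y_1, \ldots, y_q \in I$, where the left side is simply $(R_{p+1}df)(x_0, \ldots, x_p)(y_1, \ldots, y_q) = df(x_0, \ldots, x_p, y_1, \ldots, y_q)$. First I would expand this using the Lie algebra cohomology differential (applied to the $p+q+1$ arguments, the first $p+1$ in $\fg$ and the last $q$ in $I$) and sort the resulting terms into five families: the action terms $x_i\cdot f(\cdots)$ split according to whether the distinguished index lies among the $\fg$-arguments (call this $A_{\fg}$) or the $I$-arguments ($A_I$), while the bracket terms $f([z_i,z_j],\cdots)$ split into three families $B_{\fg\fg}$, $B_{\fg I}$, $B_{II}$ according to whether both indices are $\le p$, one is $\le p$ and the other $> p$, or both are $> p$. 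Throughout I use that $I$ is an ideal, so that $[x_i,y_k]$ and $[y_a,y_b]$ all lie in $I$ and are legitimate inputs for the slots of $C^q(I;M)$.

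Next I would expand the right side. Evaluating $d_h(R_p f)$ through the definition of the horizontal differential, in which $x_i$ acts on $C^q(I;M)$ by $\gt_{x_i}$, and using $(\gt_z c)(\uy) = z\cdot c(\uy) - c(z\cdot\uy)$: the $z\cdot c(\uy)$ pieces reproduce $A_{\fg}$, the $\fg$-$\fg$ bracket pieces reproduce $B_{\fg\fg}$, and the $-c(z\cdot\uy)$ pieces produce a family of correction terms, call it $\mathrm{CT}$, in which a bracket $[x_i,y_k]$ occupies an $I$-slot. Evaluating $(-1)^{p+1}d_v(R_{p+1}f)$ — where $R_{p+1}f \in C^{p+1}(\fg;C^{q-1}(I;M))$ and $d_v$ is the differential $C^{q-1}(I;M)\to C^q(I;M)$ — reproduces $A_I$ and $B_{II}$; the match requires the overall sign $(-1)^{p+1}$ together with moving each bracket $[y_a,y_b]$ from the front of the argument list (where the differential of $df$ places it) past the $p+1$ $\fg$-arguments (where $d_v$ places it), which by alternation costs exactly $(-1)^{p+1}$.

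The one genuinely delicate point, which I expect to be the crux, is showing that the cross family $B_{\fg I}$ coming from $df$ coincides with the correction terms $\mathrm{CT}$ produced by the $\gt$-action in $d_h$. In $df$ the term $f([x_i,y_k], x_0, \ldots, \widehat{x_i}, \ldots, x_p, y_1, \ldots, \widehat{y_k}, \ldots, y_q)$ carries the sign $(-1)^{i+p+k}$ with the bracket at the front, whereas in $\mathrm{CT}$ the same bracket sits in the $k$-th $I$-slot with sign $-(-1)^i$. I would reconcile these by moving $[x_i,y_k]$ from the front to its slot, passing the $p$ remaining $\fg$-arguments and the $k-1$ preceding $I$-arguments, which contributes $(-1)^{p+k-1}$; since $(-1)^{i+p+k}(-1)^{p+k-1} = -(-1)^i$, the two families agree term by term. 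Collecting the three matched groups then yields $R_{p+1}df = d_h(R_p f) + (-1)^{p+1}d_v(R_{p+1}f)$, the factor $(-1)^{p+1}$ on $d_v$ being exactly the sign forced by the repositioning in the previous paragraph. The hard part is thus purely the sign bookkeeping in this cross-term matching, where the $-c(z\cdot\uy)$ summand of $\gt_z$ is precisely what is needed to absorb the mixed brackets $B_{\fg I}$.

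Finally, the formula with $R$ replaced by $S$ is immediate. Since $C(\fg, \fk;M)$ is a subcomplex of $C(\fg;M)$, we have $df \in C^{p+q+1}(\fg, \fk;M)$ whenever $f \in C^{p+q}(\fg, \fk;M)$; as $S_p$ is by definition the restriction of $R_p$ to this subspace, $S_p f = R_p f$ and $S_{p+1}df = R_{p+1}df$ on it, and the $R$-identity restricts verbatim, with Lemma \ref{l.dstable} ensuring that $d_h$ and $d_v$ preserve the relevant relative subspaces.
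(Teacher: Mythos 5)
Your proof is correct --- all the sign bookkeeping checks out, including the crucial point that the mixed bracket terms $B_{\fg I}$ of $df$, after moving each $[x_i,y_k]$ past $p+k-1$ arguments with sign $(-1)^{i+p+k}(-1)^{p+k-1}=-(-1)^i$, exactly absorb the $-c(z\cdot\uy)$ correction terms of the $\gt$-action in $d_h$ --- and it is precisely the direct cochain verification of Hochschild--Serre to which the paper defers, since the paper omits the proof with a citation to \cite{HS:53}. Your reduction of the $S$-version to the $R$-version by restriction to the subcomplex $C(\fg,\fk;M)$, using Lemma \ref{l.restrict} and Lemma \ref{l.dstable}, is likewise the intended argument.
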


\subsection{Differentials related to relative cohomology}
To streamline the exposition we introduce some notation.

\begin{Def} \label{d.streamline}
Write
$$
C^p(C^q) = C^p(\fg/I, \fk/I_{\fk}; C^q(I, I_{\fk};M)).
$$
We adopt the analogous notation when $C^q(I, I_{\fk};M))$
is replaced by $Z^q(I, I_{\fk};M)$, $B^q(I, I_{\fk};M)$ or $H^q(I, I_{\fk};M)$
(cocycles, coboundaries, and cohomology, respectively).
When we write $C^p$ by itself, we will mean
$C^p(\fg, \fk;M)$.  Also, since $\fg/I$ acts on $H^q(I,I_{\fk};M)$,
we have groups of coboundaries and cocycles:
$$
B^p(H^q) := B^p(\fg/I,\fk /I_{\fk }; H^q(I,I_{\fk};M)) \subset 
Z^p(H^q) := Z^p(\fg/I,\fk /I_{\fk }; H^q(I,I_{\fk};M))
$$
and the corresponding cohomology group
$$
H^p(H^q) := H^p(\fg/I,\fk /I_{\fk }; H^q(I,I_{\fk};M)).
$$
\end{Def}

We need a little more notation.
Given a $(p+1)$-tuple $\uy = (y_0, \ldots, y_p)$, and $0 \le i \le p$,
let $\uy(i)$ denote the $p$-tuple $(y_0, \ldots,\hat{y}_i, \ldots y_p)$.
Given $z \in C^p(Z^q)$, we can view $z(\uy(j))$ as an element
of $Z^q(I, I_{\fk};M) \subset Z^q(I;M)$.  The space $Z^q(I;M)$ has an $\fg$-module
structure.  
To be explicit, given $u \in \fg$ and
$y_{p+1}, \ldots, y_{p+q} \in I$, we have
\begin{eqnarray*}
[u \cdot z(\uy(j))](y_{p+1}, \ldots, y_{p+q})  & = & u \cdot \big[ z(\uy(j))(y_{p+1}, \ldots, 
y_{p+q}) \big] \\
& + & \sum_{r = p+1}^{p+q} z(\uy(j))(y_{p+1}, \ldots,[y_r, u], \ldots, \ldots, y_{p+q}).
\end{eqnarray*}

\begin{Lem} \label{l.dplus}
There is a map 
$$
d_+: C^p(C^q) \to C^{p+1}(C^q)
$$
defined by the formula
$$
(d_+ z)(y_0, \ldots, y_p) = \sum_{i=0}^p (-1)^i y_i^+ \cdot z(\uy(i)) + 
\sum_{0 \le i<j \le p} (-1)^{i+j} z([y_i, y_j], y_0, \ldots, 
\hat{y}_i, \ldots, \hat{y}_j, \ldots, y_p). 
$$
\end{Lem}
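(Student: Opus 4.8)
The plan is to verify directly that, for $z \in C^p(C^q)$, the displayed formula produces a genuine element of
$C^{p+1}(C^q) = C^{p+1}(\fg/I, \fk/I_{\fk}; C^q(I, I_{\fk};M))$. Recalling that this target is
$\Hom_{\fk/I_{\fk}}(\gL^{p+1}(\fg/(I+\fk)), C^q(I, I_{\fk};M))$, I must check four things: each term of the formula is defined and takes values in $C^q(I, I_{\fk};M)$; the assignment $(y_0,\ldots,y_p)\mapsto (d_+z)(y_0,\ldots,y_p)$ is multilinear and alternating; it factors through $\gL^{p+1}(\fg/I)$; and it is $\fk/I_{\fk}$-equivariant (equivalently, annihilated by $i_{w}$ and $\gt_{w}$ for $w\in\fk$). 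The first three are routine; the equivariance is the substantive point.

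First I would record that each summand lies in $C^q(I, I_{\fk};M)$. The terms $z([y_i,y_j],\ldots)$ do so because $z$ is valued there by hypothesis. For the terms $y_i^+\cdot z(\uy(i)) = \gt_{y_i^+}z(\uy(i))$, note $y_i^+\in J$, so Lemma \ref{l.Jpreserve} guarantees that $\gt_{y_i^+}$ preserves the subspace $C^q(I, I_{\fk};M)$. Since $y\mapsto y^+$ is linear and the formula has the shape of a Chevalley--Eilenberg differential, multilinearity and the alternating property follow by the same argument sketched in Remark \ref{r.alternating}. To see that $d_+z$ descends through $\fg/I$, by the alternating property it suffices to assume $y_0\in I$ and show the value vanishes. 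Then $y_0^+=0$ because $\pi|_I=\mathrm{id}$ (Corollary \ref{c.reductive}), killing the $i=0$ term of the first sum; in the second sum the $i=0$ terms involve $z([y_0,y_j],\ldots)$ with $[y_0,y_j]\in I$ since $I$ is an ideal, and all remaining terms of both sums have $y_0\in I$ as an argument of $z$. As $z$ factors through $\fg/(I+\fk)$ it vanishes on such inputs, so $(d_+z)(y_0,\ldots,y_p)=0$.

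The main work, and the step I expect to be the obstacle, is the $\fk/I_{\fk}$-equivariance. For the condition $i_{w}(d_+z)=0$ with $w\in\fk$, I would set $y_0=w$ and use the $\fk$-equivariance of $z$ to rewrite the $i=0$ term of the first sum, $\gt_{w^+}z(y_1,\ldots,y_p)=z(w\cdot(y_1,\ldots,y_p))=\sum_k z(y_1,\ldots,[w,y_k],\ldots,y_p)$; after reordering arguments this cancels exactly against the $i=0$ terms of the second sum. Every remaining term still carries $w\in\fk$ as an argument of $z$, hence vanishes because $z$ factors through $\fg/(I+\fk)$. For $\gt_{w}(d_+z)=0$ I would expand $\gt_w\big((d_+z)(\uy)\big)-(d_+z)(w\cdot\uy)$ and match terms using three facts: $z$ is $\fk/I_{\fk}$-equivariant; the projection to $J$ is $\fk$-equivariant, so $[w,y_i^+]=[w,y_i]^+$ by Corollary \ref{c.reductive}; and $\gt$ is a genuine representation of $\fg$ on $C^q(I;M)$, so that $\gt_w\gt_{y_i^+}=\gt_{y_i^+}\gt_w+\gt_{[w,y_i]^+}$ (as already used in the proof of Proposition \ref{p.liealgaction}).

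The delicate feature here is that the first sum is built from the operators $\gt_{y_i^+}$ on $J$-components rather than from $z$ itself, so the cross-terms produced when $\gt_w$ passes through $\gt_{y_i^+}$ must be reconciled with the bracket terms of the second sum; the identity $[w,y_i]^+=[w,y_i^+]$ is precisely what forces these to agree, so that the whole expression collapses just as in the classical Chevalley--Eilenberg computation. This is the analogue in the present relative setting of the equivariance verification carried out in the proof of Lemma \ref{l.sj}, and once it is complete one concludes that $d_+z\in C^{p+1}(C^q)$, so $d_+$ is well defined.
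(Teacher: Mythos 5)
Your proof is correct, but it takes a genuinely different route from the paper's. The paper's key device is the decomposition $d_+ = d_h - e$, where $d_h$ is the relative Chevalley--Eilenberg differential on $C^p(\fg, \fk;C^q(I;M))$ --- so that $d_h z$ is \emph{automatically} a relative cochain, no computation needed --- and $(ez)(y_0, \ldots, y_p) = \sum_{i=0}^p (-1)^i y_i^* \cdot z(\uy(i))$ is the correction term; only the $\fk$-equivariance of $ez$ is then checked by hand, using $[u,y_i]^* = [u, y_i^*]$ (Corollary~\ref{c.reductive}) and the fact that this element acts as $\gt_u \gt_{y_i^*} - \gt_{y_i^*}\gt_u$. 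You instead verify equivariance of the full formula directly, via the mirror-image identities $[w,y_i]^+ = [w, y_i^+]$ (also a consequence of Corollary~\ref{c.reductive}) and $\gt_w \gt_{y_i^+} = \gt_{y_i^+}\gt_w + \gt_{[w,y_i]^+}$ together with the Jacobi identity; I checked that the resulting term-matching does close up, so your sketch is sound. It is the same underlying computation, applied to the $J$-components rather than offloaded onto the known differential $d_h$, so your version is longer but self-contained. It also has the merit of spelling out what the paper declares evident: that the values lie in $C^q(I, I_{\fk};M)$ (via Lemma~\ref{l.Jpreserve}), that the formula kills arguments in $I$ (using $y^+ = 0$ for $y \in I$ and $[I,\fg] \subset I$), and that $i_w (d_+ z) = 0$ for $w \in \fk$ --- a condition the paper's phrasing subsumes silently. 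In that last cancellation you should make explicit the half-sentence your sketch leaves implicit: $\gt_{w^+} = \gt_w$ on $C^q(I, I_{\fk};M)$ because $w^* \in I_{\fk}$ annihilates relative cochains, which is what lets you invoke the $\fk$-equivariance of $z$ to cancel the $i=0$ terms of the two sums. (One small citation quibble: Remark~\ref{r.alternating} concerns the lift $\tilde{g}$ of Lemma~\ref{l.sj}, not your formula; the alternating property here is the standard pairwise cancellation for Chevalley--Eilenberg-type expressions and is better asserted directly.)
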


\begin{proof}
It is evident that $d_+ z \in C^{p+1}(\fg/I; C^q(I,I_{\fk};M))$.  We must verify that
$d_+ z$ is $\f\fk/I_{\fk}$-equivariant, or equivalently, is $\fk$-equivariant.
We have
$$
C^p(\fg/I, \fk/I_{\fk}; C^q(I,I_{\fk};M)) \subset C^p(\fg, \fk;C^q(I;M)) \stackrel{d_h}{\rightarrow} C^{p+1}(\fg, \fk;C^q(I;M)).
$$
Define $e: C^p(\fg/I, \fk/I_{\fk}; C^q(I,I_{\fk};M)) \to  C^{p+1}(\fg/I; C^q(I;M))$ by the formula
\begin{equation} \label{e.defe}
(ez)(y_0, \ldots, y_p) = \sum_{i=0}^p (-1)^i y_i^* \cdot \big( z(\uy(i) \big)).
\end{equation}
Since $d_h$ is a relative Lie algebra cohomology differential, $d_h z$ is $\fk$-equivariant.
Since $d_+ = d_h - e$, it suffices to check that
$ez$ is $\fk$-equivariant as well, i.e., that if $u \in \fk $, and $\uy = (y_0, \ldots, y_p)$, then
\begin{equation} \label{e.dplus1}
u ( (ez)(\uy))= (ez)(u \cdot \uy).
\end{equation}
The left hand side of this equation is
\begin{equation} \label{e.dplus2}
\sum_{i=0}^p (-1)^i  u y_i^* z(\uy(i)).
\end{equation}
The right hand side is
\begin{equation} \label{e.dplus3}
\sum_{i=0}^p (ez)( y_0, \ldots, [u, y_i], \ldots, y_p).
\end{equation}
This can be rewritten as
\begin{equation} \label{e.dplus4}
\sum_{i = 0}^p (-1)^i [u,y_i]^* \cdot z(\uy(i)) + \sum_{i=0}^p \sum_{j \neq i}
(-1)^j y_j^* z(y_0, \ldots, \hat{y}_j, \ldots, [u, y_i], \ldots, y_p ).
\end{equation}
By Corollary \ref{c.reductive} $[u,y_i]^* = [u, y_i^*]$, and this element
acts on $z(\uy(i))$ as $uy_i^* - y_i^*u$.  
Also, the double sum in \eqref{e.dplus4} 
can be rewritten as
$$
\sum_{j = 0}^p (-1)^j y_j^* z(u \cdot \uy(j)).
$$
Therefore, changing $j$ to $i$ in this sum, we can rewrite \eqref{e.dplus4} as
$$
\sum_{i = 0}^p (-1)^i (uy_i^* - y_i^* u) \cdot z(\uy(i))  + \sum_{i = 0}^p (-1)^i y_i^* z(u \cdot \uy(i)).
$$
We want to show that this is equal to \eqref{e.dplus2}, i.e., that
$$
0 = - \sum_{i = 0}^p (-1)^i y_i^* u \cdot z(\uy(i))  + \sum_{i = 0}^p (-1)^i y_i^* z(u \cdot \uy(i)).
$$
This equality holds because the $\fk$-equivariance of $z$ implies that $u \cdot z(\uy(i)) = z(u \cdot \uy(i))$.
\end{proof}

Although $d_h$ commutes with $d_v$, we do not
assert that $d_+$ commutes with $d_v$.

Notice that the formula for $d_+$ would be a Lie algebra cohomology differential
if $\fg/I$ acted on $C^q(I,I_{\fk};M)$, with the action of $x$ taking $c$ to
$x^+ \cdot c$.  However, this formula does not define a Lie algebra action
(i.e., it is not compatible with the Lie bracket).
Thus, we cannot assert that $d_+ \circ d_+$ is zero on cochains.
However, by Proposition \ref{p.liealgaction}, $\fg/I$ does act on $H^q(I,I_{\fk};M)$
by the analogous formula, i.e., by $x \cdot [c] = [x^+ \cdot c]$.  
Write $[z]$ for the image of $z$ under the map
$$
C^p(\fg/I, \fk/I_{\fk}; Z^q(I,I_{\fk};M)) \to C^p(\fg/I, \fk/I_{\fk}; H^q(I,I_{\fk};M)).
$$
The Lie algebra cohomology differential
$$
C^p(\fg/I, \fk/I_{\fk}; H^q(I,I_{\fk};M)) \to C^{p+1}(\fg/I, \fk/I_{\fk}; H^q(I,I_{\fk};M))
$$
is given by $[z] \mapsto [d_+ z]$.  By abuse of notation we will simply
write $d_+ [z] = [d_+ z]$. 

\begin{Lem} \label{l.drearrange}
Let $f \in Z^q(I;M)$, and let $x_0, \ldots, x_q \in I$.  Then
\begin{eqnarray*}
(x_0 \cdot f)(x_1, \ldots, x_q) & = & \sum_{r=1}^q (-1)^{r+1} x_r (f(x_0, \ldots, \hat{x}_r, \ldots, x_q)) \\
& + & \sum_{1 \le r < s} (-1)^{r+s} f(x_0, [x_r, x_s],x_1,  \ldots, \hat{x}_r, \ldots, \hat{x}_s, \ldots).
\end{eqnarray*}
\end{Lem}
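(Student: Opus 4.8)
The plan is to exploit the homotopy formula \eqref{e.gt}, namely $\gt_z = d i_z + i_z d$, together with the hypothesis that $f$ is a cocycle. Since $f \in Z^q(I;M)$ we have $df = 0$, hence $i_{x_0} df = 0$; applying \eqref{e.gt} with $z = x_0$ (recall that $x_0 \cdot f$ denotes $\gt_{x_0} f$, and that the same formula defines $\gt_{x_0}$ on $C^q(I;M)$ because $I$ is an ideal) gives
$$
x_0 \cdot f = \gt_{x_0} f = d i_{x_0} f.
$$
Thus the whole statement reduces to expanding $d i_{x_0} f$ and recognizing the two sums on the right-hand side.

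Next I would compute $d i_{x_0} f$ explicitly. The element $i_{x_0} f$ lies in $C^{q-1}(I;M)$, with $(i_{x_0} f)(y_1, \ldots, y_{q-1}) = f(x_0, y_1, \ldots, y_{q-1})$. Applying the Lie algebra cohomology differential formula from Section \ref{s.prelim} to the $(q-1)$-cochain $i_{x_0} f$, evaluated at $(x_1, \ldots, x_q)$, produces one sum of derivative terms of the form $x_r \cdot (i_{x_0} f)(x_1, \ldots, \hat{x}_r, \ldots, x_q)$ and one sum of bracket terms of the form $(i_{x_0} f)([x_r, x_s], x_1, \ldots, \hat{x}_r, \ldots, \hat{x}_s, \ldots)$. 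Invoking the definition of $i_{x_0}$ then simply inserts $x_0$ as the first argument of $f$ in each term, yielding precisely the expressions $f(x_0, \ldots, \hat{x}_r, \ldots, x_q)$ and $f(x_0, [x_r,x_s], \ldots)$ that appear in the claimed identity.

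The only real work is the bookkeeping with signs. The differential formula in Section \ref{s.prelim} indexes its inputs starting from $0$, whereas the statement indexes the arguments of $x_0 \cdot f$ as $x_1, \ldots, x_q$. I would therefore substitute $y_i = x_{i+1}$ and track the resulting index shift: the sign $(-1)^i$ on the $i$-th derivative term becomes $(-1)^{r-1} = (-1)^{r+1}$ after setting $r = i+1$, and the sign $(-1)^{i+j}$ on the bracket term becomes $(-1)^{(r-1)+(s-1)} = (-1)^{r+s}$ after setting $r = i+1$, $s = j+1$. These agree with the signs in the statement, completing the identification. I expect this sign reconciliation to be the main — though entirely routine — obstacle; there is no conceptual difficulty once the reduction $x_0 \cdot f = d i_{x_0} f$ is in hand.
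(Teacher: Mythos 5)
Your proof is correct and is essentially the paper's own argument in different packaging: the paper's one-line proof expands $df(x_0, \ldots, x_q) = 0$ and rearranges using the definition of $x_0 \cdot f$, which is exactly the content of your reduction $x_0 \cdot f = \gt_{x_0} f = d\, i_{x_0} f$ via \eqref{e.gt} (legitimate here since $x_0 \in I$, so $i_{x_0}$ acts on $C^q(I;M)$), followed by expanding $d\, i_{x_0} f$ with the index shift you describe. Your sign bookkeeping checks out, and the same identity $u \cdot c = d\, i_u c$ for cocycles is used by the paper itself in the proof of Proposition \ref{p.liealgaction}, so your route is fully consistent with its conventions.
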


\begin{proof}
This follows by writing out the equation $df(x_0, \ldots, x_q) = 0$ and rearranging, using
the definition of $x_0 \cdot f$.
\end{proof}

The following proposition and corollary relate $z = S_p \tilz $ and $S_{p+1} \tilz$.

\begin{Prop} \label{p.dhtilde}
Let $z \in C^p(\fg/I, \fk/I_{\fk}; Z^q(I, I_{\fk};M))$, and let $\tilz \in C^{p+q}$ be defined
as in Definition \ref{d.tildemap} (so $S_p \tilz = z$).  
Let $(y_0, \ldots, y_p) \in {\fg}^{p+1}$.  Then
$$
(d_v S_{p+1} \tilz )(y_0, \ldots, y_p) = (-1)^p \sum_{i=0}^p  (-1)^i y_i^* \cdot z(\uy(i)).
$$
\end{Prop}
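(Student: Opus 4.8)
The plan is to compute $(S_{p+1}\tilz)(y_0,\dots,y_p)$ explicitly as an element of $C^{q-1}(I;M)$, then apply the vertical differential $d_v$ — which is just the Lie algebra cohomology differential $d\colon C^{q-1}(I;M)\to C^q(I;M)$ acting on the value — and simplify using the homotopy formula \eqref{e.gt} together with the fact that $z$ takes values in cocycles. This is a direct computation from the definitions, and I would not route it through Lemma \ref{l.differential}.

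First I would unwind the definitions. By Definition \ref{d.rj}, for $x_1,\dots,x_{q-1}\in I$ we have
$$
(S_{p+1}\tilz)(y_0,\dots,y_p)(x_1,\dots,x_{q-1}) = \tilz(y_0,\dots,y_p,x_1,\dots,x_{q-1}),
$$
where $\tilz$ is the shuffle sum \eqref{e.tildemap} with $g=z$ and $p+q$ inputs. The crucial observation is that $z \in C^p(\fg/I,\fk/I_{\fk};C^q(I,I_{\fk};M))$ factors through $(\fg/I)^p$, hence vanishes as soon as one of its $p$ arguments lies in $I$. Since $q-1$ of the inputs, namely $x_1,\dots,x_{q-1}$, already lie in $I$, a shuffle $\gs$ can contribute only if all of $x_1,\dots,x_{q-1}$ are routed into the $q$ value slots; this forces exactly one of $y_0,\dots,y_p$, say $y_i$, into the value slots and the remaining $p$ into the $z$-slots. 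A short inversion count shows such a shuffle carries sign $(-1)^{p-i}$, and since $x_j^*=x_j$ for $x_j\in I$ its contribution is $z(\uy(i))(y_i^*,x_1,\dots,x_{q-1})$. Collecting these and rewriting in terms of interior products (using $(-1)^{p-i}=(-1)^p(-1)^i$) gives
$$
(S_{p+1}\tilz)(y_0,\dots,y_p) = (-1)^p \sum_{i=0}^p (-1)^i\, i_{y_i^*}\, z(\uy(i)) \ \in\ C^{q-1}(I;M).
$$

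Next I would apply $d_v$, obtaining
$$
(d_v S_{p+1}\tilz)(y_0,\dots,y_p) = (-1)^p \sum_{i=0}^p (-1)^i\, d\bigl(i_{y_i^*}\, z(\uy(i))\bigr).
$$
Now the homotopy identity \eqref{e.gt}, applied on $C^q(I;M)$, gives $d\, i_{y_i^*} = \gt_{y_i^*} - i_{y_i^*} d$. Because $z$ takes values in $Z^q(I,I_{\fk};M)$, each $z(\uy(i))$ is a cocycle, so $d\,z(\uy(i))=0$ and the term $i_{y_i^*}d\,z(\uy(i))$ drops out, leaving $d\bigl(i_{y_i^*}z(\uy(i))\bigr) = \gt_{y_i^*}z(\uy(i)) = y_i^*\cdot z(\uy(i))$. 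Substituting yields exactly the asserted formula.

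I expect the only real obstacle to be the bookkeeping in the first step: arguing cleanly that precisely the $p+1$ shuffles described survive, pinning down the order $(y_i^*,x_1,\dots,x_{q-1})$ of the value-slot arguments, and getting the sign $(-1)^{p-i}$ right; the second step is a one-line use of \eqref{e.gt} and the cocycle property. As a consistency check, one may note that applying the $S$-version of \eqref{e.differential1} to $\tilz$ and using $S_p\tilz=z$ reduces the statement to the identity $S_{p+1}\,d\tilz = d_+ z$ (with $d_+$ as in Lemma \ref{l.dplus}), but the direct computation above is shorter and avoids that intermediate identity.
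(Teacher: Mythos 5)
Your proof is correct and follows essentially the same route as the paper: your shuffle computation of $(S_{p+1}\tilz)(y_0,\ldots,y_p)$, with the sign $(-1)^{p-i}$, is exactly the paper's equation \eqref{e.dhtilde1}, and your appeal to the Cartan identity $d\,i_{y_i^*}=\gt_{y_i^*}-i_{y_i^*}\,d$ applied to the cocycles $z(\uy(i))$ is precisely the identity the paper packages in component form as Lemma \ref{l.drearrange}. The only difference is presentational---operator calculus in place of the paper's explicit expansion of $d_v$ and index rearrangement---and your noted ``consistency check'' via Lemma \ref{l.differential} would in fact be circular as a proof, since Corollary \ref{c.sp+1} is deduced from this proposition, but you rightly rely on the direct computation instead.
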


\begin{proof}
First, observe that given $\ua = (a_0, \ldots, a_p ) \in {\fg}^p$, and $(b_1, \ldots, b_{q-1}) \in I^{q-1}$, we have
\begin{equation} \label{e.dhtilde1}
( S_{p+1} \tilz)(a_0, \ldots, a_p )(b_1, \ldots, b_{q-1}) = 
(-1)^p \sum_{i=0}^p (-1)^i z(\ua(i))(a_i^*, b_1, \ldots,b_{q-1}) .
\end{equation}
This follows from the definition of $\tilz$, along with the fact that if one of
$u_1, \ldots, u_p$ is in $I$, then
$z(u_1, \ldots, u_p)(v_1, \ldots, v_q) = 0$.

Now let $(y_0, \ldots, y_p) \in {\fg}^{p+1}$ and $(y_{p+1}, \ldots, y_{p+q}) \in I^q$.
To simplify the notation, for each $i \le p$, let $F_i = z(\uy(i)) \in Z^q(I)$.  
By definition, 
$$
(d_v S_{p+1} \tilz )(y_0, \ldots, y_p)(y_{p+1}, \ldots, y_{p+q})
$$ 
is equal to
\begin{eqnarray*}
 \sum_{p+1 \le r} (-1)^{r+p+1}   y_r (S_{p+1} \tilz )(y_0, \ldots, y_p)(y_{p+1}, \ldots,\hat{y}_r, \ldots, y_{p+q})  \\
+  \sum_{p+1 \le r<s} (-1)^{r+s}
(S_{p+1} \tilz )(y_0, \ldots, y_p)([y_r, y_s], y_{p+1}, \ldots,\hat{y}_r, \ldots, \hat{y}_s, 
\ldots, y_{p+q}).
\end{eqnarray*}
Equation \eqref{e.dhtilde1} implies that this is equal to
\begin{eqnarray*}
\sum_{p+1 \le r} \sum_{i \le p} (-1)^{r+i+1} y_r F_i (y_i^*, y_{p+1}, \ldots, \hat{y}_r, \ldots)  \ \ \ \ \ \ \ \ \ \ \ \ \ \ \ \ \ \ \ \   \\
+ \sum_{p+1 \le r<s} \sum_{i \le p} (-1)^{r+s+i+p} F_i (y_i^*, [y_r, y_s], y_{p+1},
 \ldots,\hat{y}_r, \ldots, \hat{y}_s, \ldots, y_{p+q} ). 
\end{eqnarray*}
We can rearrange this expression to obtain
\begin{eqnarray*}
(-1)^p \sum_{i \le p} (-1)^i \Big( \sum_{p+1 \le r} (-1)^{r+1+p} y_r ( F_i (y_i^*, y_{p+1}, \ldots, \hat{y}_r, \ldots, y_{p+q})  ) \ \ \ \ \ \ \ \ \ \ \ \ \ \ \ \ \ \ \ \   \\
+  \sum_{p+1 \le r<s} (-1)^{r+s} F_i (y_i^*, [y_r, y_s], \ldots,\hat{y}_r, \ldots, \hat{y}_s, \ldots,y_{p+q}) \Big). 
\end{eqnarray*}
It follows from Lemma \ref{l.drearrange}, with $F_i$ playing the role of $f$,
and $y_i^*, y_{p+1},  \ldots, y_{p+q}$ playing the role of
$x_0, \ldots, x_q$, 
that the above expression is equal to
$$
(-1)^p \sum_{i=0}^p (-1)^i (y_i^* F_i)(y_{p+1}, \ldots,y_{p+q})
$$
proving the proposition.
\end{proof}

\begin{Cor} \label{c.sp+1}
Let $z$ and $\tilz$ be as in Proposition \ref{p.dhtilde}.  Then
$$
s_{p+1} d \tilz = d_+ z.
$$
\end{Cor}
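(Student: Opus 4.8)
The plan is to obtain the identity by combining the $S$-version of Lemma \ref{l.differential} with the computation already carried out in Proposition \ref{p.dhtilde}, so that almost all of the work is done and only sign bookkeeping remains. Applying Lemma \ref{l.differential} to $f = \tilz$ (replacing $R$ by $S$) gives $S_{p+1} d\tilz = d_h(S_p \tilz) + (-1)^{p+1} d_v(S_{p+1}\tilz)$. Here $S_p \tilz = z$ by Definition \ref{d.tildemap}, so the first summand is $d_h z$. For the second, Proposition \ref{p.dhtilde} evaluates $(d_v S_{p+1}\tilz)(y_0, \ldots, y_p) = (-1)^p \sum_{i=0}^p (-1)^i y_i^* \cdot z(\uy(i))$, and the sum on the right is precisely $(ez)(y_0,\ldots,y_p)$ for the map $e$ introduced in the proof of Lemma \ref{l.dplus}; hence $d_v(S_{p+1}\tilz) = (-1)^p ez$. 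Substituting and using $(-1)^{p+1}(-1)^p = -1$, I get $S_{p+1} d\tilz = d_h z - ez$, which equals $d_+ z$ by the identity $d_+ = d_h - e$ recorded in the proof of Lemma \ref{l.dplus}.

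Before this can be written with the lowercase $s_{p+1}$, I would check that $d\tilz$ in fact lies in $F_{p+1} C^{p+q+1}$, which also guarantees $s_{p+1} d\tilz = S_{p+1} d\tilz$. Since $\tilz \in F_p C^{p+q}$ and $d$ preserves the filtration, $d\tilz \in F_p C^{p+q+1}$, so by Lemma \ref{l.sj}(1) it is enough to prove $S_p(d\tilz) = 0$. Using Lemma \ref{l.differential} once more, now in the form $S_p(d\tilz) = d_h(S_{p-1}\tilz) + (-1)^p d_v(S_p \tilz)$, I observe first that $S_{p-1}\tilz = 0$: evaluating it would feed $q+1$ arguments of $\tilz$ into $I$, and $\tilz \in F_p C^{p+q}$ vanishes on such inputs. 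The surviving term is $(-1)^p d_v z = (-1)^p (d \circ z)$, which vanishes because $z \in C^p(Z^q)$ takes values in cocycles. Thus $S_p(d\tilz) = 0$ and $d\tilz \in F_{p+1} C^{p+q+1}$, as needed.

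Since the delicate combinatorial computation is already isolated in Proposition \ref{p.dhtilde}, the corollary is essentially bookkeeping, and I expect the only genuine care to lie in the signs and in the well-definedness step. The point I would flag is that the vanishing $d_v z = 0$---which is exactly what forces $d\tilz$ to drop from $F_p$ into $F_{p+1}$---uses the hypothesis that $z$ takes values in $Z^q(I,I_{\fk};M)$ and not merely in $C^q(I,I_{\fk};M)$; this is where the cocycle assumption enters, and it is what makes $[d_+ z]$ a legitimate representative of the spectral sequence differential at the level of $H^q(I,I_{\fk};M)$.
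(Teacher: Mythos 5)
Your proof is correct and follows essentially the same route as the paper's: apply the $S$-version of Lemma \ref{l.differential} to $\tilz$, use $S_p\tilz = z$, and substitute Proposition \ref{p.dhtilde} for the $d_v(S_{p+1}\tilz)$ term (the paper adds the two displayed expressions pointwise, while you package the correction via the map $e$ and the identity $d_+ = d_h - e$ from the proof of Lemma \ref{l.dplus}---the same computation). Your second paragraph, verifying via $S_p(d\tilz)=0$ that $d\tilz$ actually lies in $F_{p+1}C^{p+q+1}$ so that $s_{p+1}$ applies, is a sound extra check of a point the paper leaves implicit (it follows there from Corollary \ref{c.e1}, since $s_p\tilz = z \in C^p(Z^q)$ places $\tilz$ in $Z_1^{pq}$), and you correctly identify the cocycle hypothesis on $z$ as what drives it.
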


\begin{proof}
By definition,  $s_{p+1} d \tilz = S_{p+1} d\tilz$.
By construction, $S_p \tilz = z$.  Thus, by Lemma \ref{l.differential}, 
$$
S_{p+1} d \tilz= d_h z + (-1)^{p+1} d_v (S_{p+1} \tilz).
$$
Let $(y_0, \ldots, y_p) \in {\fg}^{p+1}$.  By definition of $d_h$, we have
\begin{equation} \label{e.bars1}
d_h z(y_0, \ldots, y_p) = \sum_{i=0}^p (-1)^i y_i \cdot z(\uy(i)) + 
\sum_{0 \le r<s \le p} (-1)^{r+s} z([y_r, y_s], y_1, \ldots, \hat{y}_r, \ldots, \hat{y_s}, \ldots).
\end{equation}
On the other hand, by Proposition \ref{p.dhtilde},
\begin{equation} \label{e.bars2}
 (-1)^{p+1} d_v (S_{p+1} \tilz)(y_0, \ldots, y_p) = - \sum_{i=0}^p  (-1)^i y_i^* \cdot z(\uy(i)).
\end{equation}
Now, $y_i - y_i^* = y_i^+$.  The corollary follows by
adding \eqref{e.bars1} and \eqref{e.bars2} and comparing the result
with the definition of $d_+ z$.
\end{proof}

\section{The spectral sequence} \label{s.spectral}

\subsection{Spectral sequence generalities}
In this section we recall some standard facts about spectral sequences.
Our basic source is Chapter 5 of \cite{Wei:94}, but we have modified some of the
definitions for convenience.  See also Chapter XX.9 of \cite{Lan:02}.

Suppose $C = \oplus_{n \ge 0} C^n$ is a graded cochain complex with
differential $d$.  We assume also that $C$ has a decreasing filtration (compatible with $d$):
that is, for each $n$, we have
$$
F_0 C^n \supset F_1 C^n \supset \cdots ;
$$
the filtration is extended to negative indices by setting $F_p C^n = F_0 C^n$ for $p<0$.
We also assume
that for each $n$ there exists some $r$ (depending on $n$) such that $F_r C^n = 0$.
Let $B^n \subset Z^n$ denote the spaces of coboundaries
and cocycles (respectively) in $C^n$.  These spaces are filtered by setting
$F_p B^n := F_p C^n \cap B^n$ and $F_p Z^n := F_p C^n \cap Z^n$.
The cohomology $H^n(C)$ is filtered by $F_p H^n (C) = (F_p Z^n + B^n) / B^n$.

Define
$$
E_0^{pq} = \mbox{gr}^p C^{p+q} :=F_p C^{p+q}/F_{p+1} C^{p+q},
$$
and let $\pi_p: F_p C^{p+q} \to E_0^{pq}$ denote the projection. 

For each $r ,n, p\ge 0$ let
$$
F_p C^n(r) = \{ c \in F_p C^n \ | \ dc \in F_{p+r} C^{n+1} \}.
$$  
Thus, $F_p C^n = F_p C^n(0) \supset F_p C^n(1) \supset \ldots$.
We define subspaces of $F_p C^{p+q}$: first,
$$
Z_r^{pq} := F_p C^{p+q}(r) + F_{p+1} C^{p+q} \supset B_r^{pq} := d F_{p-r+1} C^{p+q-1}(r-1) + F_{p+1} C^{p+q}.
$$
Our assumption on the filtration means that for $r$ sufficiently large, $Z_r^{pq}
= Z_{\infty}^{pq}$ and $B_r^{pq} = B_{\infty}^{pq}$, where by definition
$$
Z_{\infty}^{pq} := F_p Z^{p+q} + F_{p+1} C^{p+q} \supset B_{\infty}^{pq} := F_p B^{p+q} + F_{p+1} C^{p+q}.
$$
There are inclusions of these spaces:
$$
0 = B_0^{pq} \subset  \ldots \subset B_r^{pq} \subset B_{r+1} \ldots \subset B_{\infty}^{pq} \subset Z_{\infty}^{pq} \subset \cdots
\subset Z_r ^{pq}\subset Z_{r-1}^{pq} \subset \ldots \subset Z_0 ^{pq}= E_0^{pq}.
$$

The $r$-th page of the spectral sequence is defined to be
\begin{equation} \label{e.pager1}
E_{r}^{pq} = Z_{r}^{pq} /B_{r}^{pq}  = 
\frac{F_p C^{p+q}(r) + F_{p+1}C^{p+q}}{dF_{p-r+1} C^{p+q-1}(r-1) + F_{p+1}C^{p+q}}.
\end{equation}
An equivalent definition of the $r$-th page is sometimes more convenient.
Given subspaces $A,B,C$ of a vector space $V$, with $A \supset B$, we have
a natural isomorphism
\begin{equation} \label{e.vsiso}
\frac{A}{ (A \cap C) + B } \to \frac{A+C}{B+C}.
\end{equation}
Applied to our situation, with $A = F_p C^{p+q}(r)$, $B=dF_{p-r+1} C^{p+q-1}(r-1)$,
$C = F_{p+1}C^{p+q}$, we find that $A \cap C = F_{p+1}C^{p+q}(r-1)$, so we obtain
a natural isomorphism
\begin{equation}  \label{e.pager2}
E_{r}^{pq} \cong
\frac{F_p C^{p+q}(r)}{G^{pq}_r}.
\end{equation}
where $G^{pq}_r = F_{p+1}C^{p+q}(r-1) + dF_{p-r+1} C^{p+q-1}(r-1)$.
Using this second description, we define a differential
$d_r: E_{r}^{pq} \to E_{r}^{p+r,q-r+1} $ by
$$
d_r (c + G^{pq}_r) = dc + G^{p+r,q-r+1}_r.
$$

The following proposition describes one of the key properties of spectral
sequences. 

\begin{Prop} \label{p.keyspectral}
Under the surjection $\pi_p: Z_r^{pq} \to E_r^{pq}$, the inverse image of
$\ker d_r$ (resp.~$\im d_r$) is $Z_{r+1}^{pq}$ (resp.~$B_{r+1}^{pq}$).  Hence
$\pi_p$ induces an isomorphism 
$$
E_{r+1}^{pq} = Z_{r+1}^{pq}/B_{r+1}^{pq} \to H(E_r,d_r)^{pq}.
$$
\end{Prop}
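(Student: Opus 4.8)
The plan is to prove the two preimage assertions separately and then read off the isomorphism from the isomorphism theorem. The map $\pi_p\colon Z_r^{pq}\to E_r^{pq}=Z_r^{pq}/B_r^{pq}$ in the statement is simply the quotient map, so it is surjective with kernel $B_r^{pq}$; once I show $\pi_p^{-1}(\ker d_r)=Z_{r+1}^{pq}$ and $\pi_p^{-1}(\im d_r)=B_{r+1}^{pq}$, the inclusions $B_r^{pq}\subset B_{r+1}^{pq}\subset Z_{r+1}^{pq}$ give
\[
E_{r+1}^{pq}=Z_{r+1}^{pq}/B_{r+1}^{pq}\cong \bigl(Z_{r+1}^{pq}/B_r^{pq}\bigr)\big/\bigl(B_{r+1}^{pq}/B_r^{pq}\bigr)=\ker d_r/\im d_r=H(E_r,d_r)^{pq},
\]
the isomorphism being induced by $\pi_p$. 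To compute the preimages I would work in the second description \eqref{e.pager2}, $E_r^{pq}\cong F_pC^{p+q}(r)/G_r^{pq}$, under which (via the identity \eqref{e.vsiso}) the restriction of $\pi_p$ to $F_pC^{p+q}(r)$ sends $c\mapsto c+G_r^{pq}$ and $d_r(c+G_r^{pq})=dc+G_r^{p+r,q-r+1}$. The first reduction is to observe that $F_{p+1}C^{p+q}$ lies in $B_r^{pq}=\ker\pi_p$ and in both $Z_{r+1}^{pq}$ and $B_{r+1}^{pq}$; hence in each preimage computation I may test membership on a representative lying in $F_pC^{p+q}(r)$.

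For the kernel, I would unwind $d_r(c+G_r^{pq})=0$ for $c\in F_pC^{p+q}(r)$ to the condition $dc\in G_r^{p+r,q-r+1}=F_{p+r+1}C^{p+q+1}(r-1)+dF_{p+1}C^{p+q}(r-1)$. Writing $dc=u+dv$ with $v\in F_{p+1}C^{p+q}(r-1)$, the element $c-v$ lies in $F_pC^{p+q}$ and satisfies $d(c-v)=u\in F_{p+r+1}C^{p+q+1}$, so $c-v\in F_pC^{p+q}(r+1)$ and $c=(c-v)+v\in Z_{r+1}^{pq}$. For the reverse inclusion a representative $c\in F_pC^{p+q}(r+1)$ has $dc\in F_{p+r+1}C^{p+q+1}$, and since $d(dc)=0$ this already lies in $F_{p+r+1}C^{p+q+1}(r-1)\subset G_r^{p+r,q-r+1}$, whence $d_r\pi_p(c)=0$. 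This yields $\pi_p^{-1}(\ker d_r)=Z_{r+1}^{pq}$.

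For the image, $\im d_r$ (into bidegree $(p,q)$) consists of the classes $de+B_r^{pq}$ with $e\in F_{p-r}C^{p+q-1}(r)$, so its preimage is $dF_{p-r}C^{p+q-1}(r)+B_r^{pq}$. Expanding $B_r^{pq}=dF_{p-r+1}C^{p+q-1}(r-1)+F_{p+1}C^{p+q}$, the crux is the absorption $F_{p-r+1}C^{p+q-1}(r-1)\subseteq F_{p-r}C^{p+q-1}(r)$: if $e\in F_{p-r+1}C^{p+q-1}$ with $de\in F_pC^{p+q}$, then a fortiori $e\in F_{p-r}C^{p+q-1}$ and $de\in F_{(p-r)+r}C^{p+q}=F_pC^{p+q}$, which is exactly the defining condition for the larger space. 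Hence $dF_{p-r+1}C^{p+q-1}(r-1)\subseteq dF_{p-r}C^{p+q-1}(r)$ and the preimage collapses to $dF_{p-r}C^{p+q-1}(r)+F_{p+1}C^{p+q}=B_{r+1}^{pq}$.

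I expect the index bookkeeping to be the main source of difficulty: matching the decorations $(r)$ and $(r-1)$ across the shifted bidegree $G_r^{p+r,q-r+1}$, keeping the two descriptions of $E_r^{pq}$ compatible with both $\pi_p$ and $d_r$ through \eqref{e.vsiso}, and checking that the coboundary-splitting and absorption steps respect the filtration degrees \emph{exactly}. No individual step is deep, but the accounting must be done with care so that the preimages land precisely on $Z_{r+1}^{pq}$ and $B_{r+1}^{pq}$ rather than on nearby spaces.
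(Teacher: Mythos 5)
Your proof is correct, and since the paper explicitly omits the proof of this proposition there is nothing in-text to compare against; your argument is the standard direct verification (as in Weibel, Ch.~5, or Lang, Ch.~XX.9), which is surely what the authors had in mind. All the index bookkeeping checks out: the key points are exactly the ones you isolate, namely the splitting $dc=u+dv$ with $v\in F_{p+1}C^{p+q}(r-1)$ so that $c-v\in F_pC^{p+q}(r+1)$, the absorption $F_{p-r+1}C^{p+q-1}(r-1)\subseteq F_{p-r}C^{p+q-1}(r)$, and the resulting identity $\pi_p^{-1}(\im d_r)=dF_{p-r}C^{p+q-1}(r)+F_{p+1}C^{p+q}=B_{r+1}^{pq}$, with the reduction modulo $F_{p+1}C^{p+q}\subset B_r^{pq}=\ker\pi_p$ justified since that space lies in both $Z_{r+1}^{pq}$ and $B_{r+1}^{pq}$.
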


We omit the proof.

\begin{Rem}\label{r.altdefdone}
For $r=0$ and $r=1$, the definitions \eqref{e.pager1} and \eqref{e.pager2}
of the $r$-th page are identical.  In particular, $G^{pq}_r = B^{pq}r$,
and $d_r(z + B_r^{pq})=dz + B_r^{pq}$.  However, the analogous
assertion is false for $r>1$.
\end{Rem}

The spectral sequence is said to degenerate at $E_r$ if for all $p,q$, we have
$Z_r^{pq} = Z_{\infty}^{pq} $ and $B_r^{pq} = B_{\infty}^{pq} $.  This is equivalent
to the vanishing of all differentials $d_s$, $s \ge r$, and implies that
$E_r^{pq} = E_{\infty}^{pq} $.

Observe that
$$
\mbox{gr}^p H^{p+q}(C) \cong \frac{F_p Z^{p+q} + B^{p+q}}{F_{p+1} Z^{p+q} + B^{p+q}} \cong 
\frac{F_p Z^{p+q}}{F_p B^{p+q} + F_{p+1} Z^{p+q}}.
$$
The composition
$$
F_p Z^{p+q}  \to Z_{\infty}^{pq} \to E_{\infty}^{pq}
$$
induces an isomorphism $\mbox{gr}^p H^{p+q}(C) \to E_{\infty}^{pq}$.

Suppose that $C$ has a graded algebra structure such that
the differential $d$ satisfies $d(c c') = (dc) c' + (-1)^p c (dc')$ for $c \in C^p, c' \in C$.
Then $H(C)$ is a graded algebra, and $\mbox{gr }H(C)$ is a bigraded algebra.
Moreover, each $E_r$ has the structure of a bigraded algebra, and the map
$\mbox{gr }H(C) \to E_{\infty}$ is an isomorphism of bigraded algebras.

\subsection{Edge maps} \label{ss.edge}
Suppose that $F_{p+1} C^p = 0$ for all $p$.  Suppose also that $r \ge 2$; in this
case $F_p C^p(r) = F_p Z^p$.  Consider the map
\begin{equation} \label{e.edge1}
\gre_p:E_r^{p0} = \frac{F_p Z^p}{d F_{p-r+1}C^{p-1}(r-1)} \to E_{\infty}^{p0} = \frac{F_p Z^p}{F_p B^p} = \mbox{gr}^p H^p(C)
\subset H^p(C).
\end{equation}
The first map $E_r^{p0} \to E_{\infty}^{p0}$ is surjective.  If the spectral sequence degenerates at $E_r$, we obtain
$$
E_r^{p0} = \mbox{gr}^p H^p(C) \hookrightarrow H^p(C).
$$

Next, making use of the facts that if $r >0$ then $F_{1-r}C^q = C^q$, 
we have an edge map
\begin{equation} \label{e.edge2}
\gre_q: H^q(C) = \frac{Z^q}{B^q} \to \frac{Z^q}{B^q + F_1 Z^q} = E_{\infty}^{0q} \hookrightarrow
E_r^{0q} = \frac{F_0 C^q(r)}{d F_0C^{q-1}(r-1)  + F_1 C^q(r-1)}.
\end{equation}
Here the first map is surjective.  If the spectral sequence degenerates at $E_r$, we obtain
a surjection
$$
H^q(C) \to \mbox{gr}^0 H^q(C) = E_r^{0q} .
$$

\subsection{The product structure}
Let $A^p = E_{\infty}^{p0}$, $A= \oplus_p A^p$, $B^q =  E_{\infty}^{0q}$, $B= \oplus B^q$.
Write $A^{+} = \oplus_{p>0} A^p$.
We endow the tensor product $A \otimes B$ with an algebra structure such that
$$
(a_1 \otimes b_1) \cdot (a_2 \otimes b_2) = (-1)^{ q_1 p_2} a_1 a_2 \otimes b_1 b_2,
$$
for $b_1 \in B^{q_1}$, $a_2 \in A^{p_2}$.

The edge maps give an inclusion $A \hookrightarrow H(C)$,
$A^p = \mbox{gr}^p H^p(C) \subset H^p(C)$.  Similarly, the edge maps give
a surjection $H(C) \to B$, $H^q(C) \to \mbox{gr}^0 H^q(C) = B^q$.  
Let $J$ denote the kernel of the map $H(C) \to B$; then $J = \oplus_q F_1 H^q(C)$.

\begin{Prop} \label{p.spectralideal}
Suppose that the multiplication map $A \otimes B \to E_{\infty}$ is an algebra
isomorphism.  Then:

(1) The ideal $J$ is equal to the ideal of $H(C)$ generated by $A^{+}$.

(2) If $ b_1, \ldots b_n $ are homogeneous elements of $H(C)$ whose images
under the map $H(C) \to B$ form a basis of $B$,
then these elements form an $A$-module basis
of $H(C)$.  
Hence if $\dim B = n$ is finite, then $H(C)$ is a free $A$-module of rank $n$.
\end{Prop}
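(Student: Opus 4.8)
The plan is to reduce both statements to the associated graded algebra $\operatorname{gr} H(C)$, which by the remarks in the product-structure part of Section \ref{s.spectral} is isomorphic as a bigraded algebra to $E_{\infty}$, and then to transport the conclusions back to $H(C)$ using that the filtration is exhaustive and, in each degree, bounded. Throughout I use the edge-map identifications of Section \ref{ss.edge}: since $F_{p+1}C^p = 0$ we have $A^p = \operatorname{gr}^p H^p(C) = F_p H^p(C) \subset H^p(C)$, while $B^q = \operatorname{gr}^0 H^q(C) = H^q(C)/F_1 H^q(C)$, so that $J = \oplus_q F_1 H^q(C)$. I also use $\operatorname{gr}^p H^{p+q}(C) \cong E_{\infty}^{pq}$ and multiplicativity of the filtration, $F_i H(C) \cdot F_j H(C) \subset F_{i+j} H(C)$.

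For (1), write $\widetilde J$ for the ideal generated by $A^{+}$. One inclusion is immediate: for $p \ge 1$ we have $A^p = F_p H^p(C) \subset F_1 H^p(C) \subset J$, hence $A^{+} \subset J$ and, $J$ being an ideal, $\widetilde J \subset J$. For the reverse inclusion I would compare associated gradeds. Since $\operatorname{gr}^p H^{p+q}(C) \cong E_{\infty}^{pq}$, one has $\operatorname{gr}(J) = \bigoplus_{p \ge 1,\, q} E_{\infty}^{pq}$, which under the hypothesized isomorphism $A \otimes B \cong E_{\infty}$ is precisely $A^{+} \otimes B$. On the other hand, for $a \in A^p$ with $p \ge 1$ and $b \in H^q(C)$, the product $ab$ lies in $F_p H^{p+q}(C) \cap \widetilde J$, and its image in $\operatorname{gr}^p H^{p+q}(C) \cong E_{\infty}^{pq}$ equals the product $\bar a\,\bar b$ of the classes $\bar a \in A^p$, $\bar b \in B^q$; as $a$ and $b$ vary these span $A^p \cdot B^q = E_{\infty}^{pq}$. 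Thus $\operatorname{gr}(\widetilde J) \supset \bigoplus_{p \ge 1,\, q} E_{\infty}^{pq} = \operatorname{gr}(J)$, forcing $\operatorname{gr}(\widetilde J) = \operatorname{gr}(J)$. A successive-approximation argument then gives $\widetilde J = J$: given $x \in J \cap F_p H^n$, choose $y \in \widetilde J \cap F_p H^n$ with the same image in $\operatorname{gr}^p$, replace $x$ by $x - y \in J \cap F_{p+1} H^n$, and repeat; boundedness of the filtration forces termination.

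For (2), I would package the $b_i$ into a single map of $A$-modules. Let $M = \bigoplus_i A e_i$ be the free graded $A$-module on generators $e_i$ of total degree $q_i$, where $b_i \in H^{q_i}(C)$, and let $\Phi \colon M \to H(C)$ be the $A$-linear map with $\Phi(e_i) = b_i$. Filtering $M$ so that $A^p e_i$ lies in filtration level $p$, the containment $A^p b_i \subset F_p H^{p+q_i}(C)$ (from $A^p = F_p H^p(C)$, $b_i \in F_0 H^{q_i}(C)$, and multiplicativity) shows $\Phi$ is filtration-preserving. The key point is the computation of $\operatorname{gr}\Phi$: on $\operatorname{gr}^p M = \bigoplus_i A^p e_i$ it sends $a e_i$ to the class of $ab_i$ in $\operatorname{gr}^p H^{p+q_i}(C) \cong E_{\infty}^{p,q_i}$, namely $a \cdot \bar b_i \in A^p \cdot B^{q_i}$. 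Under $A \otimes B \cong E_{\infty}$ this is $\operatorname{id}_{A^p} \otimes (e_i \mapsto \bar b_i)$ from $A^p \otimes (\oplus_i \F e_i)$ to $A^p \otimes B$; since the $\bar b_i$ form a basis of $B$, it is an isomorphism, and hence so is $\operatorname{gr}\Phi$.

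Finally I would upgrade $\operatorname{gr}\Phi$ being an isomorphism to $\Phi$ being one. Both filtrations are exhaustive and bounded in each total degree---for $H(C)$ by the standing hypothesis on $C$, and for $M$ because $M^n = \bigoplus_{q_i \le n} A^{n-q_i} e_i$ occupies only filtration levels $\le n$---and a filtration-preserving map inducing an isomorphism on $\operatorname{gr}$ between such spaces is itself an isomorphism (injectivity: a nonzero kernel element would have a nonzero leading term annihilated by $\operatorname{gr}\Phi$; surjectivity: the same successive-approximation argument as in (1)). This exhibits $\{b_i\}$ as a free $A$-module basis of $H(C)$, of rank $\dim B$ when $B$ is finite-dimensional. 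The main obstacle is the bookkeeping in the $\operatorname{gr}\Phi$ (and $\operatorname{gr}\widetilde J$) computation: matching bidegrees correctly and invoking $A \otimes B \cong E_{\infty}$ to identify the product $ab$ with $a \otimes b$ on associated graded. Once that identification is in hand, the passage from $\operatorname{gr}$ to the filtered level is the routine bounded-filtration comparison.
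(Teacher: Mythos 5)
Your proposal is correct and follows essentially the same route as the paper: both arguments reduce to the identification $\mbox{gr}\, H(C) \cong E_{\infty} \cong A \otimes B$, prove the reverse inclusion in (1) by subtracting products $a b$ to raise filtration degree until the bounded filtration terminates (the paper phrases this as downward induction on $p$, you as successive approximation), and prove (2) by a leading-term argument in $E_{\infty}$. Your only departure is cosmetic packaging---bundling generation and independence into a single filtered $A$-module map $\Phi$ with $\mbox{gr}\,\Phi$ an isomorphism, where the paper treats the two properties separately---so the content matches the paper's proof.
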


\begin{proof}
(1) Let $J'$ denote the ideal $A^+ H(C)$.  We want to show that $J' = J$.  Since
$A^+ \subset F_1 H(C)$, we have $J' \subset J$.   For the reverse inclusion,
it suffices to show that if $h \in F_p H^q(C)$ with $p \ge 1$, then
 $h \in J'$.  We use downward induction on $p$.  If $p >q$ then
$F_p H^q(C) = 0$ so the result holds.  Now suppose the result holds for elements
of $F_{p+1}H^q(C)$.  Let $\bar{h}$ denote the image of $h$ in $E_{\infty}^{p,q-p}$.
By hypothesis we can write $\bar{h} = \sum a_q \gb_q$ for some elements 
$a_q \in A^p$, $\gb_q \in B^{q-p}$.  Choose elements $b_q \in H^{q-p}(C)$
mapping to $\gb_q \in B$.  Then $h - \sum a_q b_q\in F_{p+1}H^q(C)$  is
in $J'$ by the inductive hypothesis.  Since $\sum a_q b_q \in J'$, we conclude
$h \in J'$, as desired.

(2) Let $b_1, \ldots, b_n$ be as in the statement of the proposition; let $d_q$ denote
the degree of $b_q$, and let $\bar{b}_q $ denote its image in $B^{d_q} \cong F_0 H^{d_q}(C)/F_1 H^{d_q}(C)$.
The argument that the $b_q$ generate $H(C)$ as an $A$-module is similar to the proof of
(1), and we omit the details.  To show that the $b_q$ are linearly independent over $A$,
suppose we have a relation 
$\sum_q a_q b_q = 0$ with not all $a_q = 0$.  We may assume that each term $a_q b_q$
has the same degree, which we denote $n$; then the degree of $a_q$ is $c_q := n-d_q$.
Let $c$ denote the smallest of the $c_q$ for which $a_q \neq 0$.  Then each term
$a_q b_q$ lies in $F_c H^n(C)$.  By definition of the product on $E_{\infty}$, the
image of $\sum_q a_q b_q$ in $E_{\infty}^{c, n-c} \cong F_c H^n(C) /F_{c+1} H^n(C)$
is $\sum_{c_q = c} a_q \bar{b}_q$.  Since the multiplication map $A \otimes B \to E_{\infty}$
is an isomorphism, this implies $\sum_{c_q = c} a_q \otimes \bar{b}_q$ is zero in $A \otimes B$.
As the $\bar{b}_q$ are linearly independent, this implies that all the $a_q$ with
$c_q = c$ must be $0$, contradicting our choice of $c$.  We conclude that the
$b_q$ are linearly independent, as desired.
\end{proof}

\section{The relative Hochschild-Serre spectral sequence} \label{s.hochserre}
The following theorem
shows the existence of the
Hochschild-Serre spectral sequence and identifies the edge maps.

\begin{Thm} \label{t.hochserre}
Let $\fg$ be a Lie algebra.  Let $\fk$ be a subalgebra of $\fg$, reductive in $\fg$,
and let $I$ be an ideal of $\fg$.
Let $I_{\fk} = I \cap \fk$.
Let $M$ be a $\fg$-module.  

(1) There is a spectral sequence converging to $H^{p+q}(\fg,\fk;M)$, and an
isomorphism $\psi$:
$$
E_2^{pq} \stackrel{\psi}{\rightarrow} H^p(\fg/I, \fk/I_{\fk}; H^q(I, I_{\fk};M)).
$$

(2) The edge morphism $E_2^{p0} \to H^p(\fg,\fk;M)$ corresponds under the isomorphism
$\psi$ to the composition
$$
H^p(\fg/I, \fk/I_{\fk}; M^I) \to H^p(\fg, \fk; M^I) \to H^p(\fg, \fk;M),
$$
where the first map is the pullback induced by the projection $(\fg, \fk) \to (\fg/I, \fk/I_{\fk})$,
and the second map is induced by the $\fg$-module map $M^I \to M$.

(3) The edge morphism $H^q(\fg, \fk;M) \to E^{0q}_2$ corresponds under the isomorphism
$\psi$ to the pullback
$$
i^*:H^q(\fg, \fk;M) \to H^q(I,I_{\fk};M)^{\fg/I}.
$$ 
\end{Thm}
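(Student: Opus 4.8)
The plan is to build the spectral sequence directly from the Hochschild--Serre filtration $F_p C^{p+q}$ of Section \ref{s.filtration} and then identify its low pages by transporting everything through the isomorphisms $s_p$ of Lemma \ref{l.sj}. First I would invoke the formalism of Section \ref{s.spectral}: since $F_p C^n = 0$ for $p > n$, the filtration is bounded, so the associated spectral sequence converges to $\mathrm{gr}^p H^{p+q}(\fg, \fk; M)$. By Lemma \ref{l.sj}, $s_p$ identifies $E_0^{pq}$ with $C^p(\fg/I, \fk/I_{\fk}; C^q(I, I_{\fk}; M))$. To pin down $d_0$, I would apply Lemma \ref{l.differential} in the form $R_p(df) = d_h(R_{p-1} f) + (-1)^p d_v(R_p f)$; for $f \in F_p C^{p+q}$ the term $R_{p-1} f$ vanishes, since it would require $q+1$ inputs from $I$, so $s_p(df) = (-1)^p d_v(s_p f)$ for $f \in F_p C^{p+q}$. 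Thus $d_0$ corresponds to $(-1)^p d_v$, and since $\fk$ is reductive in $\fg$ the functor $\Hom_{\fk/I_{\fk}}(\Lambda^p(\fg/(I+\fk)), -)$ is exact and commutes with cohomology, so taking $d_0$-cohomology yields $E_1^{pq} \cong C^p(\fg/I, \fk/I_{\fk}; H^q(I, I_{\fk}; M))$.

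The heart of the argument is the identification of $d_1$ with the differential $d_+$ of Lemma \ref{l.dplus}. By Remark \ref{r.altdefdone}, $d_1$ is computed on representatives by the honest differential $d$ modulo $B_1$. So I would represent a class in $E_1^{pq}$ by $z \in C^p(\fg/I, \fk/I_{\fk}; Z^q(I, I_{\fk}; M))$, lift it to $\tilz \in F_p C^{p+q}$ via Definition \ref{d.tildemap}, and apply Corollary \ref{c.sp+1} to get $s_{p+1}(d\tilz) = d_+ z$. Passing to $H^q(I, I_{\fk}; M)$-coefficients, where $d_+[z] = [d_+ z]$ is the genuine relative Lie algebra cohomology differential for the $\fg/I$-action of Proposition \ref{p.liealgaction}, I conclude $d_1 = d_+$. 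Taking cohomology, Proposition \ref{p.keyspectral} gives $E_2^{pq} \cong H^p(\fg/I, \fk/I_{\fk}; H^q(I, I_{\fk}; M))$, which defines $\psi$ and proves (1).

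For (2), note $F_{p+1} C^p = 0$, so the edge map $\gre_p: E_2^{p0} \to H^p(\fg, \fk; M)$ of \eqref{e.edge1} sends a class represented by a cocycle in $F_p Z^p$ to its class in $H^p(\fg, \fk; M)$. Here $H^0(I, I_{\fk}; M) = M^I$, so $E_2^{p0} = H^p(\fg/I, \fk/I_{\fk}; M^I)$; a class corresponds under $\psi^{-1}$ to a cocycle $z \in Z^p(\fg/I, \fk/I_{\fk}; M^I)$, whose lift $\tilz$ (with $q = 0$, where only the identity permutation contributes to \eqref{e.tildemap}) is simply $z$ viewed as a cochain on $\fg$ with values in $M^I \subset M$. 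That $\tilz$ is a genuine cocycle follows from Corollary \ref{c.sp+1}, since $s_{p+1} d\tilz = d_+ z = 0$ and $\ker s_{p+1} = F_{p+2} C^{p+1} = 0$. Thus $\tilz$ is exactly the image of $z$ under the pullback $H^p(\fg/I, \fk/I_{\fk}; M^I) \to H^p(\fg, \fk; M^I)$ followed by $M^I \hookrightarrow M$, which is the asserted composition.

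For (3), I would use $F_{1-r}C^q = C^q$ so that the edge map $\gre_q: H^q(\fg, \fk; M) \to E_2^{0q}$ of \eqref{e.edge2} sends $[c]$, for a cocycle $c \in Z^q(\fg, \fk; M)$, to the class of $c$ in $E_2^{0q}$. Since $s_0 c = S_0 c$ is just the restriction $i^* c$ of $c$ to $I^q$, and $H^0(\fg/I, \fk/I_{\fk}; H^q(I, I_{\fk}; M)) = H^q(I, I_{\fk}; M)^{\fg/I}$, the isomorphism $\psi$ carries the class of $c$ to $[i^* c] = i^*[c]$, which lies in the invariants by Proposition \ref{p.pullbackinvariant}; hence the edge morphism corresponds to $i^*$. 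I expect the main obstacle to lie in the identification of $d_1$ with $d_+$ in (1): this is the one place where the $\fg/I$-action is genuinely nonobvious at the cochain level, and it rests on the technical Corollary \ref{c.sp+1}, and behind it Proposition \ref{p.dhtilde}, which is precisely the new ingredient the relative setting forces on the Hochschild--Serre argument.
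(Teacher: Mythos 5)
Your proposal is correct and follows essentially the same route as the paper: Lemma \ref{l.sj} for $E_0$, Lemma \ref{l.differential} for $d_0 = (-1)^p d_v$, the lift of Definition \ref{d.tildemap} together with Corollary \ref{c.sp+1} for $d_1 = d_+$ (the paper's Propositions \ref{p.d0}, \ref{p.d1} and Theorem \ref{t.e2}), and the same cocycle-level computations for the edge maps (the paper's Propositions \ref{p.bottomedge} and \ref{p.leftedge}). One hair is out of place in your part (2): to invoke $\ker s_{p+1} = F_{p+2}C^{p+1} = 0$ you first need $d\tilz \in F_{p+1}C^{p+1}$, whereas a priori $d\tilz$ lies only in $F_p C^{p+1}$ --- this is repaired by noting $S_p\, d\tilz = (-1)^p d_v (S_p \tilz) = 0$ since $z$ takes values in $M^I$, or avoided entirely since $F_{p+2}C^{p+1}=0$ forces $F_pC^p(2) = F_pZ^p$, so representatives of $E_2^{p0}$ are automatically cocycles, as the paper observes.
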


Part (1) of this theorem is proved in Section \ref{s.existspectral}.
Part (2) is Proposition \ref{p.bottomedge}, and part (3) is
Proposition \ref{p.leftedge}.  In Section \ref{s.pairings} 
we will show that the spectral sequence is
compatible with pairings of representations.

\subsection{Some notation} \label{s.existspectral}
We follow the notational conventions of Definition \ref{d.streamline}.
We have
$$
F_p C^{p+q} \stackrel{\pi_p}{\rightarrow} E_0^{pq}  \stackrel{\spover}{\rightarrow} C^p(C^q).
$$
The composition is $s_p$, and by Lemma \ref{l.sj} it induces the isomorphism $\spover$.
Let $\stotal$ denote $\oplus_p \spover$.

\subsection{The differential $d_0$}
We begin by calculating the differential $d_0$. 

\begin{Prop} \label{p.d0}
Under the isomorphism $\stotal$, the differential
$d_0: E_0^{pq} \to E_0^{p,q+1}$ corresponds to $(-1)^{p} d_v$.
\end{Prop}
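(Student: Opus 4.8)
The plan is to unwind the definitions on both sides and verify the claimed equality $d_0 = (-1)^p d_v$ directly at the level of cochains, using the isomorphism $\stotal$ to transport the question from $E_0^{pq}$ to $C^p(C^q)$. Recall from Remark \ref{r.altdefdone} that for $r=0$ the differential $d_0$ on $E_0^{pq} = F_p C^{p+q}/F_{p+1}C^{p+q}$ is simply induced by the complex differential $d$: if $c \in F_p C^{p+q}$ represents a class, then $d_0(\pi_p c) = \pi_p(dc)$, where $dc$ lands in $F_p C^{p+q+1}$ (indeed, one checks $dc \in F_p C^{p+q+1}$ so the class makes sense, and the assertion is that it actually lands where the $(p,q+1)$-spot demands). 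So the statement to prove is that under the identifications $\spover: E_0^{pq} \to C^p(C^q)$ and $\spover: E_0^{p,q+1} \to C^p(C^{q+1})$, the map $\pi_p(dc)$ corresponds to $(-1)^p d_v$ applied to the image of $\pi_p(c)$.

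The key tool is Lemma \ref{l.differential}, which I would apply with $S$ in place of $R$:
\begin{equation*}
S_{p+1}\, df = d_h(S_p f) + (-1)^{p+1} d_v(S_{p+1} f).
\end{equation*}
However, I want the $(p,q+1)$-component, so the more relevant instance replaces the roles appropriately: the image of $dc$ under $s_p$ (keeping $p$ fixed and increasing the $I$-degree to $q+1$) is what computes $d_0$. First I would note that since $c \in F_p C^{p+q}$, the term $d_h(S_p c) \in C^{p+1}(\fg; C^q(I;M))$ raises the $\fg/I$-degree rather than the $I$-degree, hence it contributes to the $(p+1,q)$-spot and is killed when we project to compute the $(p,q+1)$-component of $d_0$. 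Concretely, $s_p(dc) = S_p(dc)$ picks out exactly the part of $dc$ with $p$ external arguments and $q+1$ arguments in $I$, and by Lemma \ref{l.differential} (read in the form $S_p\, dc = (-1)^p d_v(S_p c) + (\text{horizontal piece})$) the surviving term is $(-1)^p d_v(S_p c) = (-1)^p d_v(z)$ where $z = \spover(\pi_p c)$. This is precisely the desired formula.

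The main obstacle, and the step requiring genuine care, is bookkeeping the filtration degrees to confirm that the horizontal term $d_h$ genuinely does not contribute to the $(p,q+1)$-component. I would argue this by observing that in the differential $dc$, evaluated on $p$ arguments from $\fg$ and $q+1$ arguments from $I$, every term coming from the $d_h$-type contribution (i.e., involving a bracket or a module action using an external argument, hence effectively having at most $q$ inputs from $I$ in the inner cochain) is filtered one step higher and thus vanishes modulo $F_{p+1}$. Equivalently, I must verify that the only terms of $dc$ that have exactly $q+1$ of the last arguments in $I$ while the inner structure stays at $I$-level $q+1$ are exactly the terms assembled by $d_v$. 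Once the degree counting is pinned down—which is the substance of the (omitted) proof of Lemma \ref{l.differential} specialized here—the sign $(-1)^p$ falls out directly from the reindexing in that lemma, and the proposition follows by applying $\spover$ to both sides.
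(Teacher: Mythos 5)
Your argument is in substance the paper's own proof: both rest on Lemma \ref{l.differential} (in its $S$-form, with the index shifted) to write $S_p\,d\hat{e} = d_h(S_{p-1}\hat{e}) + (-1)^p d_v(S_p \hat{e})$ for a lift $\hat{e} \in F_p C^{p+q}$, both kill the horizontal term using the filtration, and the sign $(-1)^p$ emerges identically. One justification, however, is wrong as stated and should be repaired: you claim the horizontal contribution ``contributes to the $(p+1,q)$-spot and is killed when we project to compute the $(p,q+1)$-component.'' That describes the wrong instance of the lemma. In the instance relevant to $d_0$, the horizontal term is $d_h(S_{p-1}\hat{e})$, which lies in $C^p(\fg; C^{q+1}(I;M))$ --- exactly the same bidegree $(p,q+1)$ as the term $d_v(S_p\hat{e})$ you keep --- so no projection discards it. It vanishes identically, not merely modulo $F_{p+1}$: since $\hat{e} \in F_p C^{p+q}$, the cochain $\hat{e}$ is zero whenever $q+1$ of its inputs lie in $I$, so $S_{p-1}\hat{e} = 0$ on the nose (equivalently, Lemma \ref{l.sj}(1) gives $\ker s_{p-1} = F_p C^{p+q}$), whence $d_h(S_{p-1}\hat{e}) = 0$. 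Your closing paragraph's filtration-counting heuristic gestures at this, but the clean statement above is what the paper uses; with that sentence corrected, your proof coincides with the paper's.
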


\begin{proof}
Let $e \in E_0^{pq}$, and choose $\hat{e} \in F_p C^{p+q}$
satisfying $\pi_p\hat{e}= e$.   By definition,
$d_0 e = \pi_p de$, so $\spover d_0 e = s_p d \hat{e}$.
As $s_p$ is the restriction of $S_p$, we apply Lemma
\ref{l.differential} and find
$$
\spover d_0 e = s_p d \hat{e} = S_p d \hat{e} = 
d_h (S_{p-1} \hat{e}) + (-1)^{p} d_v (S_{p} \hat{e}).
$$
As $\hat{e} \in F_p C^{p+q}$, we have $S_{p-1} \hat{e} = 0$ by Lemma \ref{l.sj}.
Therefore,
$$
\spover d_0 e = (-1)^{p} d_v (S_{j} \hat{e}) = (-1)^p d_v s_p \hat{e} = (-1)^p d_v  \spover e,
$$
proving the proposition.
\end{proof}

The following diagram summarizes some of the relationships between the groups
we are considering.  The vertical arrows are inclusions, and the horizontal maps
$\ga$ and $\gb$ (defined by this diagram)
are the surjections from cocycles to cohomology.

$\begin{CD}
F_p C^{p+q} @>{s_p}>> C^p(C^q) \\
                        @.                  @AAA \\
                        @.                 C^p(Z^q) @>{\ga}>> C^p(H^q) \\
                        @.                  @.                                  @AAA  \\
                        @.                  @.                                   Z^p(H^q) @>{\gb}>> H^p(H^q).
\end{CD}$                       
                       
As a consequence of Proposition \ref{p.d0}, we have:

\begin{Cor} \label{c.e1}
Under the surjective map $s_p$, 
the inverse image of $C^p(Z^q)$ (resp.~$C^p(B^q)$) is $Z_1^{pq}$
(resp.~$B_1^{pq})$.  Hence the map $E_1^{pq} \to C^p(H^q)$
defined by $c + B_1^{pq} \mapsto \ga s_p (c)$ is an isomorphism.
\end{Cor}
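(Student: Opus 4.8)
The plan is to reduce both inverse-image statements to the already-established description of $d_0$, and then read off the isomorphism. Recall that $s_p = \spover \circ \pi_p$ with $\spover : E_0^{pq} \to C^p(C^q)$ an isomorphism, and that Proposition~\ref{p.keyspectral} (applied with $r=0$) identifies $Z_1^{pq} = \pi_p^{-1}(\ker d_0)$ and $B_1^{pq} = \pi_p^{-1}(\operatorname{im} d_0)$. Hence it suffices to show $\spover(\ker d_0) = C^p(Z^q)$ and $\spover(\operatorname{im} d_0) = C^p(B^q)$. By Proposition~\ref{p.d0}, $d_0$ corresponds under $\spover$ to $(-1)^p d_v$, and since the sign is a unit this means
$$
\spover(\ker d_0) = \ker\big(d_v : C^p(C^q) \to C^p(C^{q+1})\big), \qquad \spover(\operatorname{im} d_0) = d_v\big(C^p(C^{q-1})\big).
$$

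Next I would compute these two spaces. Since $d_v$ is post-composition with the differential $d : C^q(I;M) \to C^{q+1}(I;M)$, a cochain $z \in C^p(C^q)$ lies in $\ker d_v$ exactly when each value $z(\ux)$ is a $d$-cocycle, i.e.\ when $z$ takes values in $Z^q(I,I_{\fk};M)$; thus $\ker d_v = C^p(Z^q)$, which gives $Z_1^{pq} = s_p^{-1}(C^p(Z^q))$ at once. For the image, the inclusion $d_v(C^p(C^{q-1})) \subseteq C^p(B^q)$ is immediate, because $d_v w$ visibly takes values in $B^q(I,I_{\fk};M)$; equivalently, using $s_p(dc) = (-1)^p d_v(s_p c)$ (a consequence of Lemma~\ref{l.differential} together with the vanishing $S_{p-1}c = 0$ for $c \in F_p C^{p+q-1}$), this direction already yields $B_1^{pq} \subseteq s_p^{-1}(C^p(B^q))$ directly.

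The reverse inclusion $C^p(B^q) \subseteq d_v(C^p(C^{q-1}))$ is the main obstacle: it is the assertion that every $\fk/I_{\fk}$-equivariant map into $B^q(I,I_{\fk};M)$ lifts, equivariantly, through the surjection $d : C^{q-1}(I,I_{\fk};M) \to B^q(I,I_{\fk};M)$. I would obtain this from the exactness of the functor $\Hom_{\fk/I_{\fk}}(\gL^p(\fg/(I+\fk)),-)$ on the relevant $\fk$-modules --- the same exactness that underlies the surjectivity of $\ga$ recorded in the diagram above --- which is exactly what the hypothesis that $\fk$ is reductive in $\fg$ is meant to provide. Granting surjectivity of $d_v$ onto $C^p(B^q)$, a short chase finishes the reverse inclusion: given $c$ with $s_p(c) \in C^p(B^q)$, write $s_p(c) = d_v(w)$, lift $w = s_p(c')$ with $c' \in F_p C^{p+q-1}$ by Lemma~\ref{l.sj}(3), and observe that $c - (-1)^p dc' \in F_{p+1}C^{p+q} = \ker s_p$, so $c \in d(F_p C^{p+q-1}) + F_{p+1}C^{p+q} = B_1^{pq}$. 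This completes the two inverse-image identities. The final isomorphism is then formal: restricting $s_p$ gives a surjection $Z_1^{pq} \to C^p(Z^q)$, and composing with the surjection $\ga : C^p(Z^q) \to C^p(H^q)$ (whose kernel is $C^p(B^q)$) produces a surjection $\ga s_p : Z_1^{pq} \to C^p(H^q)$ whose kernel is $s_p^{-1}(C^p(B^q)) = B_1^{pq}$; passing to the quotient identifies $E_1^{pq} = Z_1^{pq}/B_1^{pq}$ with $C^p(H^q)$.
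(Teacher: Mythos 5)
Your route is, in outline, exactly the paper's: the paper proves the cocycle half by the chain $s_p^{-1}(C^p(Z^q)) = \pi_p^{-1}(\ker d_0) = Z_1^{pq}$, citing Propositions \ref{p.d0} and \ref{p.keyspectral}, and then dismisses the coboundary half with ``a similar argument works.'' Your writeup is more candid about what that phrase hides: as you observe, everything reduces to the identity $d_v(C^p(C^{q-1})) = C^p(B^q)$, of which only the inclusion $\subseteq$ is formal. Your concluding chase (lift $w$ through $s_p$ by Lemma \ref{l.sj}(3), correct by $(-1)^p dc'$, and note the difference lies in $F_{p+1}C^{p+q} = \ker s_p$) is correct and is precisely the content the paper leaves implicit, as is your derivation of the isomorphism $E_1^{pq} \cong C^p(H^q)$ from the two preimage identities.

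The gap is the one sentence justifying the reverse inclusion $C^p(B^q) \subseteq d_v(C^p(C^{q-1}))$: exactness of $\Hom_{\fk/I_{\fk}}(\gL^p(\fg/(I+\fk)), -)$ is \emph{not} what ``$\fk$ reductive in $\fg$'' provides. That hypothesis controls the adjoint action of $\fk$ on $\fg$ (hence on $\gL^p(\fg/(I+\fk))$), but the functor is being applied to the surjection $d: C^{q-1}(I,I_{\fk};M) \to B^q(I,I_{\fk};M)$, whose $\fk$-module structure comes from the arbitrary $\fg$-module $M$; no complete reducibility is guaranteed there, and when $\gL^p(\fg/(I+\fk))$ is trivial the functor is just the invariants functor, which is only left exact. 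Concretely, take $\fg$ abelian with basis $x,y,z$, $\fk = \F x$, $I = \F y$ (so $I_{\fk}=0$ and $\fk$ is reductive in $\fg$), and $M = \F^2$ with $x$ and $y$ both acting by the nilpotent $N$ ($Ne_2 = e_1$, $Ne_1 = 0$) and $z$ acting by $0$. For $(p,q)=(1,1)$ one checks $C^1(C^0) \cong M^{\fk} = \F e_1$, so $d_v(C^1(C^0)) = 0$, while $C^1(B^1) \cong \F \neq 0$; correspondingly $B_1^{1,1} = 0$ while $s_1^{-1}(C^1(B^1)) = Z_1^{1,1} \cong \F$, and the corollary's map $c + B_1^{1,1} \mapsto \ga s_1(c)$ is the zero map between one-dimensional spaces. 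So the lifting you need is a genuine extra hypothesis: complete reducibility of the $\fk$-action on the coefficient complex, e.g. $M$ finite-dimensional and semisimple as a $\fk$-module in characteristic zero --- automatic in the paper's application, where $M = \C$ and $\fk = \fl_{\gD}$ is reductive. To be fair, the paper silently assumes the very same thing: its ``similar argument'' needs exactly this surjectivity, as does its unproved assertion that $\ga: C^p(Z^q) \to C^p(H^q)$ is surjective. So you have not introduced a new error but inherited the paper's; the attribution of the exactness to reductivity of $\fk$ in $\fg$ is nonetheless wrong as stated, and the fix is to impose (or make explicit) the complete-reducibility hypothesis on the coefficients rather than on $\fg$.
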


\begin{proof}
We have
$$
s_p^{-1} (C^p(Z^q)) = \pi_p^{-1}(\ker d_0) = Z_1^{pq},
$$
where the first equality is by Proposition \ref{p.d0}, and the second
by Proposition \ref{p.keyspectral}.  A similar argument works to show
$s_p^{-1} (C^p(B^q)) = B_1^{pq}$.  
\end{proof}

For $e = c+B_1^{pq} \in E_1^{pq}$, write $\phi(e) = \ga s_p(c) \in C^p(H^q)$.

\begin{Prop} \label{p.d1}
Under the isomorphism $\phi: E_1^{pq} \to C^p(H^q)$, the spectral
sequence differential
$d_1$ corresponds to the Lie algebra cohomology differential $d_+$,
i.e., $\phi( d_1 e) = d_+ \phi (e)$.
\end{Prop}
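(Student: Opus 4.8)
The plan is to verify the identity $\phi(d_1 e) = d_+ \phi(e)$ on one cleverly chosen representative of each class $e \in E_1^{pq}$. This is legitimate because both maps are already known to be well defined: $\phi$ is the isomorphism of Corollary \ref{c.e1}, and $d_1$ is well defined on $E_1$. By Remark \ref{r.altdefdone}, for $r=1$ the differential is $d_1(c + B_1^{pq}) = dc + B_1^{p+1,q}$, so that $\phi(d_1 e) = \ga\, s_{p+1}(dc)$ for any representative $c \in Z_1^{pq}$ of $e$. Given $e$, I would first pick an arbitrary representative $c \in Z_1^{pq}$ and set $z = s_p(c)$, which lies in $C^p(Z^q)$ by Corollary \ref{c.e1}; thus $\phi(e) = \ga(z) = [z]$.

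The key step is to replace $c$ by the canonical lift $\tilz$ of $z$ from Definition \ref{d.tildemap}, for which Corollary \ref{c.sp+1} applies on the nose. For this I must check two things: that $\tilz \in Z_1^{pq}$, and that $\tilz$ represents the same class $e$. For the first, $\tilz \in F_p C^{p+q}$ by construction, so it suffices to show $d\tilz \in F_{p+1}C^{(p+1)+q}$, equivalently $S_p(d\tilz) = 0$ (as $\ker s_p = F_{p+1}$, by Lemma \ref{l.sj}). Applying Lemma \ref{l.differential} gives $S_p(d\tilz) = d_h(S_{p-1}\tilz) + (-1)^{p} d_v(S_{p}\tilz)$; here $S_{p-1}\tilz = 0$ because $\tilz$ vanishes whenever $q+1$ of its arguments lie in $I$, while $S_p\tilz = z$. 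Since $z$ takes values in cocycles, $d_v z = 0$, so indeed $S_p(d\tilz) = 0$ and $\tilz \in F_p C^{p+q}(1) \subset Z_1^{pq}$. For the second, $s_p(c - \tilz) = z - z = 0$, so $c - \tilz \in F_{p+1}C^{p+q} \subset B_1^{pq}$ (the latter inclusion being immediate from the definition of $B_1^{pq}$), whence $\tilz + B_1^{pq} = c + B_1^{pq} = e$.

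With $\tilz$ as the representative, the conclusion is a direct application of Corollary \ref{c.sp+1}. We have $d_1 e = d\tilz + B_1^{p+1,q}$, and since $d(d\tilz)=0$, the element $d\tilz$ lies in $Z_1^{p+1,q}$, so $s_{p+1}(d\tilz) \in C^{p+1}(Z^q)$ by Corollary \ref{c.e1}. Then
$$
\phi(d_1 e) = \ga\, s_{p+1}(d\tilz) = \ga(d_+ z) = [d_+ z] = d_+ [z] = d_+ \phi(e),
$$
using Corollary \ref{c.sp+1} for the middle equality and the definition $d_+[z]=[d_+z]$ for the penultimate one.

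I expect the main obstacle to be the bookkeeping that establishes $\tilz \in Z_1^{pq}$ and that it represents $e$; once that reduction is in place the proof is essentially immediate. The crux of that reduction is the vanishing $d_v z = 0$, which holds precisely because $z$ takes values in the cocycles $Z^q(I,I_{\fk};M)$. This is exactly what forces $\tilz$ to be a genuine $Z_1^{pq}$-representative and what aligns the Hochschild--Serre formula of Corollary \ref{c.sp+1} with the spectral sequence differential $d_1$.
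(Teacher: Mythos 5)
Your proof is correct and follows essentially the same route as the paper's: replace an arbitrary representative by the canonical lift $\tilz$ of $z=s_p(c)$ from Definition \ref{d.tildemap}, then conclude via Remark \ref{r.altdefdone} and Corollary \ref{c.sp+1}. The only cosmetic differences are that you verify $\tilz\in Z_1^{pq}$ directly from Lemma \ref{l.differential} and show $\tilz$ represents $e$ via $c-\tilz\in F_{p+1}C^{p+q}\subset B_1^{pq}$, where the paper instead cites Corollary \ref{c.e1} ($s_p^{-1}(C^p(Z^q))=Z_1^{pq}$) and the injectivity of $\phi$ --- both routes are sound.
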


\begin{proof}
Let $e \in E_1^{pq}$, and let $\phi(e)= \ga(z)$, with $z\in C^p(Z^q)$.
Let $\tilz \in F_p C^{p+q}$ be the lift of $z$ defined in Definition \ref{d.tildemap}.   Then
$s_p(\tilz)=z$, so by Corollary \ref{c.e1},
$\tilz \in Z_1^{pq}$.  Observe that $e = \tilz + B_1^{pq}$: this
follows by Corollary \ref{c.e1}, since 
$$
\phi(\tilz + B_1^{pq}) = \ga(s_p \tilz) = \ga(z) = \phi(e).
$$
By Remark \ref{r.altdefdone}, $d_1e = d\tilz + B_1^{p+1,q}$, and therefore
$$
\phi(d_1 e)=\phi(d\tilz + B_1^{p+1,q})=\alpha s_{p+1}(d\tilz) = \ga(d_+z) = d_+ \ga(z) = d_+\phi(e),
$$
where the third equality is by Corollary \ref{c.sp+1}, and the fourth is by definition of
the map $d_+$ on cohomology.  This proves the result.
\end{proof}



\begin{Thm} \label{t.e2}
If $c \in Z_2^{pq}$, then $\ga s_p (c) \in Z^p(H^q)$.  The map
$Z_2^{pq} \to H^p(H^q)$ given by $c \mapsto \beta \ga s_p(c)$ is surjective with kernel
$B_2^{pq}$.  Hence the induced map
$$
\psi: E_2^{pq} \to H^p(H^q)
$$
defined by
$$
\psi(c + B_2^{pq}) = \gb \ga s_p(c),
$$
is an isomorphism.
\end{Thm}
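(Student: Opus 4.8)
The plan is to deduce the theorem from the general spectral-sequence fact Proposition \ref{p.keyspectral} (applied with $r=1$), combined with the identification of the first page already obtained in Corollary \ref{c.e1} and Proposition \ref{p.d1}. The point is that those two results together exhibit an isomorphism of complexes between $(E_1^{pq}, d_1)$ and $(C^p(H^q), d_+)$, so that passing to cohomology on both sides relates $E_2^{pq}$ to $H^p(H^q)$; the only real task is to verify that the resulting isomorphism is the one implemented by the explicit formula $c \mapsto \gb \ga s_p(c)$.

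Concretely, I would recall from Corollary \ref{c.e1} that $\phi \colon E_1^{pq} \to C^p(H^q)$, given by $\phi(c + B_1^{pq}) = \ga s_p(c)$, is an isomorphism, and from Proposition \ref{p.d1} that $\phi \, d_1 = d_+ \, \phi$. Hence $\phi$ carries $\ker d_1$ isomorphically onto $\ker d_+ = Z^p(H^q)$ and $\im d_1$ onto $\im d_+ = B^p(H^q)$. Applying Proposition \ref{p.keyspectral} with $r=1$, the quotient $\pi_p \colon Z_1^{pq} \to E_1^{pq}$ satisfies $\pi_p^{-1}(\ker d_1) = Z_2^{pq}$ and $\pi_p^{-1}(\im d_1) = B_2^{pq}$. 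For the first assertion, if $c \in Z_2^{pq}$ then $\pi_p(c) \in \ker d_1$, so $\ga s_p(c) = \phi(\pi_p(c)) \in Z^p(H^q)$, which makes $\gb \ga s_p(c)$ well-defined. For the isomorphism I would then factor the map $c \mapsto \gb \ga s_p(c)$ as the composite
$$
Z_2^{pq} \xrightarrow{\;\pi_p\;} \ker d_1 \xrightarrow{\;\phi\;} Z^p(H^q) \xrightarrow{\;\gb\;} H^p(H^q),
$$
each arrow of which is surjective: $\pi_p$ restricts to a surjection onto $\ker d_1$ because $\pi_p$ is onto and $Z_2^{pq} = \pi_p^{-1}(\ker d_1)$, $\phi$ is an isomorphism, and $\gb$ is surjective by construction. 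Its kernel consists of those $c$ with $\pi_p(c) \in \phi^{-1}(B^p(H^q)) = \im d_1$, i.e. $c \in \pi_p^{-1}(\im d_1) = B_2^{pq}$, the inverse image being taken inside $Z_2^{pq}$ and agreeing with $B_2^{pq}$ since $B_2^{pq} \subseteq Z_2^{pq}$. Passing to the quotient yields the desired isomorphism $\psi \colon E_2^{pq} = Z_2^{pq}/B_2^{pq} \to H^p(H^q)$.

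I do not expect a serious obstacle here, since the genuinely delicate computation, namely that the differential $d_1$ corresponds to the Lie algebra cohomology differential $d_+$, has already been carried out in Proposition \ref{p.d1} (resting on Proposition \ref{p.dhtilde} and Corollary \ref{c.sp+1}). What remains is bookkeeping with the filtration. The one point to watch is the chain of inclusions $B_1^{pq} \subseteq B_2^{pq} \subseteq Z_\infty^{pq} \subseteq Z_2^{pq}$ recorded in Section \ref{s.spectral}: it guarantees both that $\gb \ga s_p$ vanishes on $B_2^{pq}$, so the map descends to $E_2^{pq}$, and that the kernel computed inside $Z_2^{pq}$ really is $B_2^{pq}$, so that Proposition \ref{p.keyspectral} can be invoked cleanly.
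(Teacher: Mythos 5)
Your proposal is correct and follows essentially the same route as the paper: both identify $(E_1^{pq},d_1)$ with $(C^p(H^q),d_+)$ via the isomorphism $\phi$ of Corollary \ref{c.e1} and Proposition \ref{p.d1}, and then invoke Proposition \ref{p.keyspectral} with $r=1$ to conclude that under $\phi\circ\pi_p$ the inverse images of $Z^p(H^q)$ and $B^p(H^q)$ are $Z_2^{pq}$ and $B_2^{pq}$. The additional bookkeeping you record---factoring $c\mapsto\gb\ga s_p(c)$ through $Z_2^{pq}\to\ker d_1\to Z^p(H^q)\to H^p(H^q)$ and noting that the inverse image taken inside $Z_2^{pq}$ still equals $B_2^{pq}$---is exactly what the paper compresses into ``the result follows.''
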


\begin{proof}
We have 
$$
Z_1^{pq} \stackrel{\pi_p}{\rightarrow} E_1^{pq} \stackrel{\phi}{\rightarrow} C^p(H^q),
$$
where $\pi_p$ is surjective and $\phi$ is an isomorphism.
By Proposition \ref{p.d1}, $\phi:\ker(d_1) \to Z^p(H^q)$ and $\phi:\im(d_1)
\to B^p(H^q)$ are isomorphisms.
Thus, Proposition \ref{p.keyspectral} implies that under $\phi \circ \pi_p$,
the inverse image of $Z^p(H^q)$ is $Z_2^{pq}$ and the
inverse image of $B^p(H^q)$ is $B_2^{pq}$.
The result follows.
\end{proof}

\subsection{Edge maps}
In this section we show that the edge maps are compatible with
maps defined using the functorial properties of Lie algebra cohomology.

Since $H^0(I,I_{\fk};M) = M^I$, we have
$$
H^p(H^0) = H^p(\fg/I, \fk/I_{\fk}; H^0(I,I_{\fk};M)) = H^p(\fg/I, \fk/I_{\fk}; M^I).
$$
There is a natural morphism
$$
\eta: H^p(H^0) \to H^p(\fg, \fk;M)
$$
defined as the composition
$$
H^p(\fg/I, \fk/I_{\fk}; M^I) \to H^p(\fg, \fk; M^I) \to H^p(\fg, \fk;M),
$$
where the first map is the pullback induced by the projection $(\fg, \fk) \to (\fg/I, \fk/I_{\fk})$,
and the second map is induced by the $\fg$-module map $M^I \to M$.
More concretely, we can view $C(\fg/I,\fk/I_{\fk};M^I)$ as the subspace
of elements $f \in C(\fg, \fk;M)$ such that $f$ vanishes when any argument is
in $I$, and such that the image of $f$ lies in $M^I$.  We claim that
$$
F_p Z^p(\fg, \fk;M) =  Z^p(\fg/I, \fk/I_{\fk}; M^I).
$$
Since by definition of the filtration any element $g$ of the left hand side vanishes
when any argument is in $I$, to verify the claim we only need to check that any
such element has image in $M^I$.  This follows because if $x_0 \in I$, then
$$
0 = dg(x_0, \ldots, x_p) = x_0 g(x_1, \ldots, x_p).
$$
Let $f \in Z^p(\fg/I, \fk/I_{\fk}; M^I)$.  The map $\eta$ takes the class of $f$ in
$H^p(\fg/I, \fk/I_{\fk}; M^I)$ to the class of the same element $f$, but now in
$H^p(\fg, \fk;M)$.  We see that the image of $\eta$ lies in $F_p H^p(\fg, \fk;M)$.

There is another morphism $H^p(H^0) \to H^p(\fg, \fk;M)$ defined
as the composition of $\psi^{-1}$ with the edge morphism $\gre_p$:
$$
H^p(H^0) \stackrel{\psi^{-1}}{\rightarrow} E^{p0}_2 \stackrel{\gre_p}{\rightarrow}
H^p(\fg, \fk;M).
$$
The next proposition shows that this morphism coincides with $\eta$.

\begin{Prop} \label{p.bottomedge}
The following diagram commutes:
$$
\xymatrix{
E_2^{p0} \ar[r]^{\gre_p} \ar[d]^{\psi}
&H^p(\fg, \fk;M) \\
H^p(H^0) \ar@{>}^{\eta}[ur] & }
$$
\end{Prop}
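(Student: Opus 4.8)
The plan is to chase a single class through the diagram and reduce the equality $\gre_p = \eta \circ \psi$ to the explicit description of $\eta$ recorded just before the statement. Since we work at $E_2$ and in bidegree $(p,0)$, the discussion in Section \ref{ss.edge} gives $F_p C^p(2) = F_p Z^p$, so any class in $E_2^{p0}$ is represented by a cocycle $c \in F_p Z^p = F_p Z^p(\fg, \fk;M)$. The crucial structural input, already established above, is the identification
$$
F_p Z^p(\fg, \fk;M) = Z^p(\fg/I, \fk/I_{\fk}; M^I),
$$
so $c$ is literally a $\fg/I$-cocycle with values in $M^I$ that vanishes whenever an argument lies in $I$.

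Next I would unwind $\psi$ in the case $q=0$. Here $H^0(I,I_{\fk};M) = M^I$ and $B^0(I,I_{\fk};M)=0$, so $Z^0(I,I_{\fk};M) = H^0(I,I_{\fk};M) = M^I$ and the map $\ga \colon C^p(Z^0) \to C^p(H^0)$ of the diagram preceding Corollary \ref{c.e1} is the identity. Moreover, for $q=0$ the map $s_p$ is tautological: by Definition \ref{d.rj}, $(s_p c)(x_1, \ldots, x_p)$ is just $c(x_1, \ldots, x_p)$ regarded as an element of $C^0(I;M)=M$, and since $c$ takes values in $M^I = Z^0$, we get $s_p c \in C^p(Z^0)$ with $s_p c = c$ under the identification above. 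Hence $\psi(c + B_2^{p0}) = \gb\,\ga\, s_p(c) = \gb(c)$ is simply the class $[c] \in H^p(\fg/I, \fk/I_{\fk}; M^I) = H^p(H^0)$.

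Finally I would apply the explicit description of $\eta$: by the discussion preceding the proposition, $\eta$ sends the class of $f \in Z^p(\fg/I, \fk/I_{\fk}; M^I)$ to the class of the \emph{same} cochain $f$, now viewed in $H^p(\fg, \fk;M)$. Applying this with $f=c$ gives $\eta(\psi(c + B_2^{p0})) = [c] \in H^p(\fg, \fk;M)$. On the other hand, by \eqref{e.edge1} the edge morphism $\gre_p$ sends $c + B_2^{p0}$ to the image of $c$ in $\mbox{gr}^p H^p(C) \subset H^p(\fg, \fk;M)$, which is again the class $[c]$. Thus $\eta \circ \psi = \gre_p$, establishing commutativity.

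The argument is essentially bookkeeping of identifications, and the one step I would treat as the main obstacle is verifying that all three of $s_p$, $\ga$, and the passage through $\gb$ collapse to the tautological identification when $q=0$ — so that both composites produce literally the cohomology class of the single cochain $c$. Once this is checked, commutativity is immediate, and no residual sign or normalization issue arises because the value $p+q$ degenerates to $p$ and no vertical differential is involved.
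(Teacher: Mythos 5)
Your proof is correct and takes essentially the same route as the paper's own argument: both represent a class in $E_2^{p0}$ by a cocycle $c \in F_p Z^p(\fg, \fk;M) = Z^p(\fg/I, \fk/I_{\fk}; M^I)$ and check that $\gre_p$, $\psi$, and $\eta$ all act tautologically on this representative, so both composites give the class of the same cochain $c$ in $H^p(\fg, \fk;M)$. Your explicit verification that $s_p$, $\ga$, and $\gb$ collapse to identifications when $q=0$ (using $Z^0(I,I_{\fk};M) = H^0(I,I_{\fk};M) = M^I$ and $B^0 = 0$) is precisely the content the paper states more briefly when it says $\ga s_p f$ is ``again $f$.''
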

\begin{proof}
As observed in equation \eqref{e.edge1},
$$
E_2^{p0} =\frac{ F_p Z^p(\fg, \fk;M)}{d F_{p-1} C^{p-1}(1)}.
$$
Let $f \in F_p Z^p(\fg, \fk;M) = Z^p(\fg/I, \fk/I_{\fk}; M^I)$.   Then $\gre_p$
takes the class of $f$ in $E_2^{p0}$ to the class of $f$ in $H^p(\fg, \fk;M)$.
On the other hand,
$s_p f$
is just $f$, now viewed as an element of $C^p(\fg/I,\fk/I_{\fk}; C^0(I,I_{\fk};M))$,
and $\alpha s_p f$ is again $f$, but now viewed as an element of
$Z^p(\fg/I,\fk/I_{\fk}; M^I)$.  Thus, $\psi$ takes the class of $f$ in $E_2^{p0}$
to the class of $f$ in $H^p(\fg/I,\fk/I_{\fk}; M^I)$.  The discussion preceding the proposition shows that applying $\eta$ to this
yields the class of $f$ in $H^p(\fg, \fk;M)$.  We conclude that the diagram commutes,
as claimed.
\end{proof}

We now consider the other edge morphism.  By Proposition \ref{p.pullbackinvariant},
the inclusion $i: (I,I_{\fk}) \to (\fg, \fk)$
induces a pullback 
$$
i^*:H^q(\fg, \fk;M) \to H^q(I,I_{\fk};M)^{\fg/I} = H^0(\fg/I;H^q(I,I_{\fk};M)) = H^0(H^q).
$$ 
On the other hand, we have a morphism defined as the composition
$$
H^q(\fg, \fk;M) \stackrel{ \gre_q}{\rightarrow} E^{0q}_2 \stackrel{\psi}{\rightarrow} H^0(H^q).
$$
The next proposition shows that these two morphisms agree.

\begin{Prop} \label{p.leftedge}
The following diagram commutes:
$$
\xymatrix{
H^q(\fg, \fk;M)\ar[r]^{\gre_q} \ar[dr]^{i^*}
&E_2^{0q} \ar[d]^{\psi} \\
& H^0(H^q) .}
$$
\end{Prop}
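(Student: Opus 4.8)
The plan is to mimic the proof of Proposition \ref{p.bottomedge}, tracing a single cocycle through the definitions of $\gre_q$, $\psi$, and $i^*$. Start with a class in $H^q(\fg,\fk;M)$ represented by a cocycle $f \in Z^q(\fg,\fk;M) = Z^q$. Since $df=0$ we have $f \in F_0 Z^q$, and because the incoming differentials at the left edge $p=0$ all vanish (their sources $E_r^{-r,q+r-1}$ are zero), one checks that $B_2^{0q}=B_\infty^{0q}$ and $f \in F_0 Z^q \subseteq Z_\infty^{0q}\subseteq Z_2^{0q}$. Thus, reading \eqref{e.edge2} in the $Z_2^{0q}/B_2^{0q}$ presentation in which $\psi$ is defined (Theorem \ref{t.e2}), the element $\gre_q([f])$ is simply $f + B_2^{0q}$.

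Next I would apply $\psi$, which by Theorem \ref{t.e2} gives $\psi(f + B_2^{0q}) = \gb\,\ga\, s_0(f)$. The central observation is that $s_0$ is nothing but restriction to $I$: since $s_0$ is the restriction of $R_0$ and $(R_0 f)()(x_1,\ldots,x_q)=f(x_1,\ldots,x_q)$ for $x_1,\ldots,x_q \in I$, we have $s_0 f = f|_{\Lambda^q I}$, which is exactly the cochain-level pullback $i^* f$ along $i:(I,I_{\fk})\to(\fg,\fk)$. Because $df=0$ and $I$ is a subalgebra, the $I$-differential $d_v(s_0 f) = d(f|_{\Lambda^q I})$ equals $(df)|_{\Lambda^{q+1}I}=0$, so $s_0 f \in C^0(Z^q)$. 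Hence $\ga\, s_0(f) = [f|_{\Lambda^q I}]$ is the cohomology class of $i^* f$ in $H^q(I,I_{\fk};M)$, viewed in $C^0(H^q)$; by Proposition \ref{p.pullbackinvariant} this class is $\fg/I$-invariant, so it lies in $Z^0(H^q)$. Since $B^0(H^q)=0$, the map $\gb$ is the identity on these invariants, whence $\gb\,\ga\, s_0(f) = [f|_{\Lambda^q I}]$ in $H^0(H^q) = H^q(I,I_{\fk};M)^{\fg/I}$.

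Finally I would compare with the pullback: by definition of $i^*$ on cohomology, $i^*([f]) = [i^* f] = [f|_{\Lambda^q I}]$, which is precisely the element just computed. Therefore $\psi\circ\gre_q = i^*$ and the diagram commutes. The proof is short, and the only points that require care are bookkeeping: confirming the representative of $\gre_q([f])$ is $f+B_2^{0q}$ (which, for a genuine cocycle, is unambiguous across the two presentations \eqref{e.pager1} and \eqref{e.pager2} of the page, cf.\ Remark \ref{r.altdefdone}), and verifying that $s_0 f$ is a $d_v$-cocycle so that $\ga$ and $\gb$ apply. The real structural content, which makes all of this routine, is the identification $s_0 = i^*$ at the cochain level together with the invariance supplied by Proposition \ref{p.pullbackinvariant}.
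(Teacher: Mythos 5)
Your proposal is correct and takes essentially the same approach as the paper's proof: both trace a single cocycle through $\gre_q$, identify $s_0$ with the cochain-level pullback $i^*$ (via Definition \ref{d.rj}), and observe that $\ga s_0(f)$ lies in $Z^0(H^q) = H^0(H^q) = H^q(I,I_{\fk};M)^{\fg/I}$, where $\gb$ is the identity. The extra bookkeeping you supply --- checking that $s_0 f$ is a $d_v$-cocycle and citing Proposition \ref{p.pullbackinvariant} for the $\fg/I$-invariance --- is left implicit in the paper but is exactly the right justification.
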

\begin{proof}
Let $c \in Z^q(\fg, \fk;M)$.  By Definition (see \eqref{e.edge2}), 
the edge homomorphism $\gre_q$ takes
the class of $c$ in $H^q(\fg, \fk;M)$ to the class of $c$ in $E_2^{0q}$,
and $\psi$ takes this class to $\gb \ga s_0(c)$.  By definition \ref{d.rj},
$s_0(c) \in C^0(C^q) = C^q(I,I_{\fk};M)^{\fk/I_{\fk}}$ is the pullback $i^*c$.  In addition,
$i^*c \in Z^q(I,I_{\fk};M)^{\fk/I_{\fk}}$, and
$\ga s_0(c)$ is the cohomology class of $s_0(c) = i^*c$ in $C^0(H^q) = H^q(I,I_{\fk};M)^{\fk/I_{\fk}}$.  
Further, $\ga s_0(c)$ lies in $Z^0(H^q) = H^0(H^q) = H^q(I,I_{\fk};M)^{\fg/I}$, and $\beta: Z^0(H^q) \to H^0(H^q)$
is the identity map.  We conclude that the composition $\psi \circ \gre_q$ takes the cohomology
class of $c$ to the cohomology class of $i^*c$, so the diagram commutes.
\end{proof}

\subsection{Pairings}  \label{s.pairings}
A pairing of $\fg$-modules induces a pairing on spectral sequences.  In this section
we show that the basic result about these pairings (\cite{HS:53}, Theorem 5) extends
to the relative situation.

Given a $\fg$-module $M$, write $E_r^{pq}(M)$ for the corresponding
Hochschild-Serre spectral sequence.  Write 
\begin{equation} \label{e.cohmodule}
H^p(H^q(M)) = H^p(\fg/I, \fk/I_{\fk}; H^q(I,I_{\fk};M)),
\end{equation}
 and
let $\psi_M: E_2^{pq}(M) \to H^p(H^q(M))$ denote the isomorphism of
Theorem \ref{t.e2}.

Suppose $M,N$ and $P$ are $\fg$-modules with a $\fg$-module map
$M \otimes N \to P$.  There is a ``cup product" pairing
$$
C^p(\fg;M) \otimes C^q(\fg;N) \to C^{p+q}(\fg;P)
$$
taking $a \otimes b$ to $a \cup b$ (the formula is given in \cite{HS:53}, p. 592).
As verified by Hochschild and Serre, this product is compatible with the differential $d$ and the action $\gt_z$
for $z \in \fg$, in that
\begin{equation} \label{e.diffcup}
d(a \cup b) = da \cup b + (-1)^p a \cup db
\end{equation}
and 
$$\gt_z(a \cup b) = \gt_z a \cup b + a \cup \gt_z b.
$$ 
The cup product pairing
induces a pairing on the spaces of relative cochains:
$$
C^p(\fg, \fk;M) \otimes C^q(\fg, \fk;N) \to C^{p+q}(\fg, \fk;P).
$$
Indeed, let $a \in C^p(\fg, \fk;M)$
and $b \in C^q(\fg, \fk;N)$.  If $z \in \fk $, then 
$\gt_z(a \cup b) = \gt_z a \cup b + a \cup \gt_z b = 0$, and
$i_z(a \cup b) = 0$ (this is immediate from the cup product
formula).  The compatibility with the differential implies that
the cup product pairing descends to cohomology, yielding a pairing
$$
H^p(\fg, \fk;M) \otimes H^q(\fg, \fk;N) \to H^{p+q}(\fg, \fk;P).
$$

\begin{Lem} \label{l.cup}
There is a pairing
$$
E_r^{p_1,q_1}(M) \otimes E_r^{p_2,q_2}(N) \to E_r^{p_1+p_2,q_1+q_2}(N).
$$
taking $[a] \otimes [b]$ to $[a \cup b]$ (the brackets denote the class of a cochain).
\end{Lem}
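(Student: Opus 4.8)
The plan is to show that the cup product on cochains is compatible with the entire filtration apparatus that defines the pages, so that it descends to a well-defined pairing $[a]\otimes[b]\mapsto[a\cup b]$ on each $E_r$ (with values in the spectral sequence of $P$, where $M\otimes N\to P$ is the given module map). The heart of the matter is a single multiplicativity statement for the Hochschild--Serre filtration: if $a\in F_{p_1}C^{p_1+q_1}(\fg;M)$ and $b\in F_{p_2}C^{p_2+q_2}(\fg;N)$, then $a\cup b\in F_{p_1+p_2}C^{p_1+p_2+q_1+q_2}(P)$. I would prove this directly from the shuffle formula for the cup product: if more than $q_1+q_2$ of the arguments of $a\cup b$ lie in $I$, then for each shuffle $\sigma$ either more than $q_1$ arguments are fed to $a$ or more than $q_2$ are fed to $b$, and in either case the corresponding summand vanishes because $a\in F_{p_1}$ (resp.\ $b\in F_{p_2}$). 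Since the cup product of relative cochains is again a relative cochain (shown in the paragraph preceding the lemma), this gives $F_{p_1}C^{p_1+q_1}\cup F_{p_2}C^{p_2+q_2}\subset F_{p_1+p_2}C^{p_1+p_2+q_1+q_2}$ at the level of $C(\fg,\fk;-)$.

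Next I would upgrade this to the subspaces $F_pC^n(r)=\{c\in F_pC^n: dc\in F_{p+r}C^{n+1}\}$ using the Leibniz rule \eqref{e.diffcup}. If $a\in F_{p_1}C^{n_1}(r)$ and $b\in F_{p_2}C^{n_2}(r)$, then $d(a\cup b)=da\cup b+(-1)^{n_1}a\cup db$, and each term lies in $F_{p_1+p_2+r}$ by the multiplicativity just established (since $da\in F_{p_1+r}$ and $db\in F_{p_2+r}$); hence $a\cup b\in F_{p_1+p_2}C^{n_1+n_2}(r)$. Combining this with the bound $F_{p+1}\cup F_{p'}\subset F_{p+p'+1}$ and the definition $Z_r^{pq}=F_pC^{p+q}(r)+F_{p+1}C^{p+q}$, a short expansion of $a\cup b$ into four terms shows $Z_r^{p_1,q_1}(M)\cup Z_r^{p_2,q_2}(N)\subset Z_r^{p_1+p_2,q_1+q_2}(P)$.

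The main obstacle, and the step I would spend the most care on, is checking that the pairing is well defined modulo boundaries, that is, $Z_r\cup B_r\subset B_r$ and $B_r\cup Z_r\subset B_r$, where $B_r^{pq}=dF_{p-r+1}C^{p+q-1}(r-1)+F_{p+1}C^{p+q}$. Writing $a=a_0+a_1$ with $a_0\in F_{p_1}C(r)$, $a_1\in F_{p_1+1}C$, and $b=d\beta+b_1$ with $\beta\in F_{p_2-r+1}C(r-1)$, $b_1\in F_{p_2+1}C$, the only term needing real work is $a_0\cup d\beta$. Here I would first verify that $a_0\cup\beta\in F_{p_1+p_2-r+1}C(r-1)$: its filtration degree is correct by multiplicativity, and $d(a_0\cup\beta)=da_0\cup\beta+(-1)^{n_1}a_0\cup d\beta$ lands in $F_{p_1+p_2}$ since $da_0\in F_{p_1+r}$ and $d\beta\in F_{p_2}$. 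The Leibniz rule then lets me solve $a_0\cup d\beta=(-1)^{n_1}\big(d(a_0\cup\beta)-da_0\cup\beta\big)$, exhibiting it as a sum of an element of $dF_{p_1+p_2-r+1}C(r-1)$ and an element of $F_{p_1+p_2+1}C$, hence in $B_r$. The remaining three products all lie in $F_{p_1+p_2+1}C\subset B_r$ for filtration reasons, so $a\cup b\in B_r$. The symmetric inclusion follows the same way with the sign in \eqref{e.diffcup} tracked carefully, and together these show the cup product descends to the asserted pairing on $E_r$. This is essentially Theorem~5 of \cite{HS:53}; the only new point is that all cochains involved remain relative, which is guaranteed by the compatibility of $\cup$ with $i_z$ and $\gt_z$ for $z\in\fk$.
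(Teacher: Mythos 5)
Your proposal is correct and takes essentially the same approach as the paper: the paper's proof consists precisely of the filtration multiplicativity \eqref{e.cup1} (which you establish via the shuffle formula), its Leibniz-rule upgrade to the subspaces $F_p C^n(r)$ \eqref{e.cup2}, and the assertion that the lemma then follows ``in a straightforward way.'' Your careful treatment of the boundary term $a_0 \cup d\beta$ by solving the Leibniz identity for it through $d(a_0 \cup \beta)$, with $a_0 \cup \beta \in F_{p_1+p_2-r+1}C(r-1)$, is exactly the verification that the paper leaves implicit, and all your filtration bookkeeping checks out.
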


\begin{proof}
Write $A^n = C^n(\fg, \fk;M)$, $B^n = C^n(\fg, \fk;N)$, $D^n = C^n(\fg, \fk;P)$.  First, observe that
the cup product satisfies
\begin{equation} \label{e.cup1}
F_{p_1} A^{p_1+ q_1} \otimes F_{p_2} B^{p_2+q_2} \to F_{p_1+p_2} D^{p_1+p_2+q_1+q_2}.
\end{equation}
Next, the property $d(a \cup b) = da \cup b + (-1)^p a \cup db$ and
\eqref{e.cup1} imply that
\begin{equation} \label{e.cup2}
F_{p_1} A^{p_1+ q_1}(r) \otimes F_{p_2} B^{p_2+q_2}(r) \to F_{p_1+p_2} D^{p_1+p_2+q_1+q_2}(r).
\end{equation}
The lemma follows in a straightforward way
from equations \eqref{e.cup1} and \eqref{e.cup2}.
\end{proof}

The pairing is compatible with the differential $d_r$:

\begin{Lem}
Let $e \in E_r^{p_1+q_1}(M)$ and $e' \in  E_r^{p_2+q_2}(N)$.  Then
$$
d_r(e \cup e') = (d_r e) \cup e' + (-1)^{p_1+q_1} e \cup (d_r e').
$$
\end{Lem}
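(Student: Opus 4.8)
The plan is to lift both classes to cochains, apply the Leibniz rule \eqref{e.diffcup} for the cup product, and then project back to the $E_r$-page. Writing $A^n = C^n(\fg, \fk;M)$, $B^n = C^n(\fg, \fk;N)$, and $D^n = C^n(\fg, \fk;P)$ as in the proof of Lemma \ref{l.cup}, I would use the description \eqref{e.pager2} of the $r$-th page to choose representatives $a \in F_{p_1} A^{p_1+q_1}(r)$ of $e$ and $b \in F_{p_2} B^{p_2+q_2}(r)$ of $e'$, so that $e = a + G^{p_1q_1}_r$ and $e' = b + G^{p_2q_2}_r$. By \eqref{e.cup2}, the cochain $a \cup b$ lies in $F_{p_1+p_2} D^{p_1+p_2+q_1+q_2}(r)$, and by the construction of the pairing in Lemma \ref{l.cup} it represents $e \cup e'$ in $E_r^{p_1+p_2,q_1+q_2}(P)$.

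Next I would compute $d_r(e \cup e')$ directly from the definition $d_r(c + G^{pq}_r) = dc + G^{p+r,q-r+1}_r$, so that $d_r(e \cup e')$ is represented by $d(a \cup b)$. Since $a$ has total cochain degree $p_1+q_1$, the Leibniz rule \eqref{e.diffcup} gives
$$
d(a \cup b) = (da) \cup b + (-1)^{p_1+q_1} a \cup (db).
$$
It then remains to recognize the two terms on the right as representatives of $(d_r e) \cup e'$ and $e \cup (d_r e')$. The key observation is that $da$ is a legitimate representative of $d_r e$ in $E_r^{p_1+r,q_1-r+1}(M)$: from $a \in F_{p_1} A^{p_1+q_1}(r)$ we get $da \in F_{p_1+r} A^{p_1+q_1+1}$, and $d(da) = 0$ lies in every filtration level, so $da \in F_{p_1+r} A^{p_1+q_1+1}(r)$ and $d_r e = da + G^{p_1+r,q_1-r+1}_r$; the same holds for $db$ and $d_r e'$. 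Applying the pairing of Lemma \ref{l.cup} to the representative pairs $(da, b)$ and $(a, db)$ shows that $(da) \cup b$ represents $(d_r e) \cup e'$ and $a \cup (db)$ represents $e \cup (d_r e')$, both in $E_r^{p_1+p_2+r,\,q_1+q_2-r+1}(P)$, which is exactly the bidegree of $d_r(e \cup e')$. Projecting the displayed identity into this group yields the claimed formula.

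I expect the only real care to lie in the bookkeeping of bidegrees and filtration levels, namely in checking that each of $a \cup b$, $(da) \cup b$, and $a \cup (db)$ lands in the correct $F_\bullet(r)$ subspace so that Lemma \ref{l.cup} applies and the projections compute the intended products. The sign is forced by the total degree $p_1+q_1$ of $a$ in \eqref{e.diffcup}, and there is no genuine analytic difficulty once the representatives are chosen within $F_\bullet(r)$; the compatibility of the cup product with passage to the $E_r$-page is precisely what Lemma \ref{l.cup} supplies.
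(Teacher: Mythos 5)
Your proposal is correct and is essentially the paper's own argument: the paper's proof consists of two sentences saying precisely that $d_r$ is computed by choosing representative cochains, applying $d$, and invoking the Leibniz rule \eqref{e.diffcup}. Your write-up simply makes explicit the filtration bookkeeping the paper leaves implicit, including the useful observation that $da \in F_{p_1+r}A^{p_1+q_1+1}(r)$ automatically since $d(da)=0$, so $da$ is a legitimate representative of $d_r e$ to which the pairing of Lemma \ref{l.cup} applies.
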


\begin{proof}
The differential $d_r$ is calculated by choosing representative
cochains and applying the differential $d$.  The lemma is then
a consequence of \eqref{e.diffcup}.
\end{proof}

There is a pairing
$$
H^{q_1}(I,I_{\fk};M) \otimes H^{q_2}(I,I_{\fk};N) \to H^{q_1+q_2}(I,I_{\fk};P).
$$
Since this is a $\fg/I$-module pairing, we obtain a corresponding
pairing in $(\fg/I,\fk/I_{\fk})$-cohomology (with notation as in \eqref{e.cohmodule}):
\begin{equation} \label{e.pairing}
H^{p_1}(H^{q_1}(M) ) \otimes H^{p_2}(H^{q_2}(N) )   \stackrel{\cup}{\rightarrow} 
 H^{p_1+p_2}(H^{q_1+q_2}(P)).
\end{equation}
Note that this pairing is derived from a pairing on the level of cochains.

The next result relates this pairing with the spectral sequence pairing,
extending Theorem 5 of \cite{HS:53} to the relative setting.

\begin{Prop} \label{p.comparepairing}
Let $e \in E_r^{p_1+q_1}(M)$ and $e'  \in  E_r^{p_2+q_2}(N)$.
Then
$$
\psi_P(e \cup e') = (-1)^{p_2 q_1} \psi_M(e) \cup \psi_N(e').
$$
\end{Prop}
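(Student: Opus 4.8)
The plan is to reduce the identity to a cochain-level computation and then push that computation through the two maps $\ga$ and $\gb$ that define $\psi$. Since $\psi$ lives on the $E_2$-page, I take $r=2$ throughout. First I would choose representatives: pick $a$ in the filtered subcomplex $F_{p_1}C^{p_1+q_1}(2)$ for $M$ and $b$ in $F_{p_2}C^{p_2+q_2}(2)$ for $N$, with $[a]=e$ and $[b]=e'$ (possible by the description \eqref{e.pager2} of $E_2$). By Lemma \ref{l.cup} the class $e\cup e'$ is represented by $a\cup b$, and by Theorem \ref{t.e2} we have $\psi_P(e\cup e')=\gb\ga\,s_{p_1+p_2}(a\cup b)$, $\psi_M(e)=\gb\ga\,s_{p_1}(a)$, and $\psi_N(e')=\gb\ga\,s_{p_2}(b)$. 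Thus the proposition reduces to comparing $\gb\ga\,s_{p_1+p_2}(a\cup b)$ with $\bigl(\gb\ga\,s_{p_1}(a)\bigr)\cup\bigl(\gb\ga\,s_{p_2}(b)\bigr)$ in $H^{p_1+p_2}(H^{q_1+q_2}(P))$.

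The heart of the argument is the cochain identity
$$ s_{p_1+p_2}(a\cup b)=(-1)^{p_2 q_1}\,s_{p_1}(a)\cup s_{p_2}(b), $$
where on the right $\cup$ denotes the cup product of $(\fg/I,\fk/I_{\fk})$-cochains valued in $Z^*(I,I_{\fk};-)$, induced by the cup product pairing on cocycles $Z^*(I,I_{\fk};-)$. To prove it I would evaluate both sides on $\fg/I$-arguments $x_1,\ldots,x_{p_1+p_2}$ and $I$-arguments $y_1,\ldots,y_{q_1+q_2}$ and expand the defining shuffle sum (\cite{HS:53}, p.~592) for $a\cup b$. Because $a\in F_{p_1}$ and $b\in F_{p_2}$, a shuffle contributes only when $a$ receives exactly $q_1$ of the $I$-arguments $y_j$ and $b$ receives exactly $q_2$ of them; since all $\fg$-slots precede all $I$-slots, such shuffles are exactly the pairs consisting of a $(p_1,p_2)$-shuffle of the $x$'s and a $(q_1,q_2)$-shuffle of the $y$'s, and each surviving summand is $a(x_{I_a},y_{J_a})\cdot b(x_{I_b},y_{J_b})$, matching a summand of $s_{p_1}(a)\cup s_{p_2}(b)$. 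The main obstacle, exactly as in \cite{HS:53}, Theorem 5, is the sign bookkeeping: the sign of the combined shuffle differs from the product of the two separate shuffle signs by the sign of moving $b$'s block of $p_2$ $\fg$-arguments past $a$'s block of $q_1$ $I$-arguments, which is $(-1)^{p_2 q_1}$; this yields the asserted sign.

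Finally I would descend this identity through $\ga$ and $\gb$. The map $\ga$ is induced by the morphism $Z^*(I,I_{\fk};-)\to H^*(I,I_{\fk};-)$, which carries the cup product of cocycles to the cup product of cohomology classes (by \eqref{e.diffcup}), so $\ga\bigl(s_{p_1}(a)\cup s_{p_2}(b)\bigr)=\bigl(\ga s_{p_1}(a)\bigr)\cup\bigl(\ga s_{p_2}(b)\bigr)$ as $(\fg/I,\fk/I_{\fk})$-cochains valued in $H^*(I,I_{\fk};-)$. Since $a$ and $b$ represent classes in $E_2$, Theorem \ref{t.e2} guarantees that $\ga s_{p_1}(a)$ and $\ga s_{p_2}(b)$ lie in $Z^*(H^*)$, and $\gb$ is the passage to $(\fg/I,\fk/I_{\fk})$-cohomology, which again takes cup products of cocycles to cup products of their classes; hence $\gb$ commutes with $\cup$ on these elements. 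Applying $\gb\ga$ to the displayed cochain identity and combining the two compatibilities gives $\psi_P(e\cup e')=(-1)^{p_2 q_1}\psi_M(e)\cup\psi_N(e')$, as desired.
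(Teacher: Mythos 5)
Your proof is correct and follows essentially the same route as the paper: the paper's proof also chooses filtered representatives $a,b$, reduces the claim to the cochain identity $s_{p_1}(a)\cup s_{p_2}(b)=(-1)^{p_2q_1}s_{p_1+p_2}(a\cup b)$, and then (rather than descending explicitly through $\ga$ and $\gb$ as you do) simply cites the shuffle-and-sign calculation from the proof of Theorem 3 of \cite{HS:53}. Your version merely fills in the details the paper delegates to that citation --- the observation that the filtration forces $a$ to absorb exactly $q_1$ of the $I$-arguments, the $(-1)^{p_2q_1}$ from moving the $p_2$ block past the $q_1$ block, and the compatibility of $\ga$ and $\gb$ with cup products --- all of which check out.
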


\begin{proof}
Keep the notation of the proof of Lemma \ref{l.cup}.  Let
$a \in F_{p_1} A^{p_1+q_1}$ and $b \in F_{p_2}B^{p_2+q_2}$
represent $e$ and $e'$, respectively.  It suffices to prove
that the pairing of cochains satisfies
$$
s_{p_1}(a) \cup s_{p_2}(b) = (-1)^{p_2 q_1} s_{p_1 + p_2}(a \cup b).
$$
This follows from the calculation in the proof of Theorem 3 of \cite{HS:53}.
\end{proof}

\subsection{A tensor product decomposition}
The natural pairing of the trivial representation $\F$ with itself induces a product
on $E_2(\F)$, making this space a bigraded ring.
In this subsection, we describe a tensor product decomposition of 
$E_2(\F)$, under
a certain cohomological hypothesis.
For simplicity, if the coefficient representation is the trivial representation
$\F$, we will generally omit it from the notation.  Thus, for example, we write
$C^n(\fg, \fk)$ and $H^n(\fg, \fk)$ for 
  $C^n(\fg, \fk;\F)$ and $H^n(\fg, \fk;\F)$, respectively, and $E_2$ for $E_2(\F)$.

 Unless otherwise stated, we will assume
 $H(\fg/I,\fk/I_{\fk}) \otimes H(I,I_{\fk})$ is given the product structure defined by
 \begin{equation} \label{e.signedproduct}
 (a_1 \otimes b_1)(a_2 \otimes b_2) = (-1)^{p_2 q_1} (a_1 \cup a_2) \otimes (b_1 \cup b_2),
 \end{equation}
where $a_2 \in H^{p_2}(\fg/I,\fk/I_{\fk})$ and $b_1 \in H^{q_1}(I,I_{\fk})$.  
We will denote the product defined by the same formula except without the factor of
$(-1)^{p_2 q_1}$ by $*$, and refer to this as the ``unsigned product".

\begin{Prop} \label{p.productbk}
Suppose that for any $\fg/I$-module $V$,
the inclusion $V^{\fg/I} \hookrightarrow V$ induces an isomorphism
\begin{equation} \label{e.includeiso}
H(\fg/I,\fk/I_{\fk};V^{\fg/I}) \to H(\fg/I,\fk/I_{\fk};V).
\end{equation}
Then there exists an algebra isomorphism
$$
\Psi: E_2 \cong H(\fg/I,\fk/I_{\fk}) \otimes H(I,I_{\fk})^{\fg/I}
$$
such that if $e_1 \in E_2^{p0}$ and $e_2 \in E_2^{0q}$, then
\begin{equation} \label{e.compat1}
\Psi(e_1) = \psi(e_1) \otimes 1 
\end{equation}
\begin{equation} \label{e.compat2}
\Psi(e_2) = 1 \otimes \psi(e_2).
\end{equation}
\end{Prop}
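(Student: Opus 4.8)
The plan is to realize $\Psi$ as the composite $\Psi = \theta \circ \psi$, where $\psi$ is the isomorphism of Theorem~\ref{t.e2} and $\theta$ is an isomorphism supplied by the cohomological hypothesis, and then to trace cup products through each factor. First I would apply the hypothesis \eqref{e.includeiso} to the $\fg/I$-module $V = H^q(I,I_{\fk})$: the inclusion $V^{\fg/I} \hookrightarrow V$ then induces an isomorphism $H^p(\fg/I,\fk/I_{\fk};V^{\fg/I}) \stackrel{\sim}{\to} H^p(\fg/I,\fk/I_{\fk};V) = H^p(H^q)$. Since $V^{\fg/I}$ is a trivial $\fg/I$-module, there is a canonical splitting $H^p(\fg/I,\fk/I_{\fk};V^{\fg/I}) = H^p(\fg/I,\fk/I_{\fk}) \otimes V^{\fg/I}$; composing the inverse of the hypothesis isomorphism with this splitting defines $\theta : H^p(H^q) \to H^p(\fg/I,\fk/I_{\fk}) \otimes H^q(I,I_{\fk})^{\fg/I}$. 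Setting $\Psi = \theta \circ \psi$ on each $E_2^{pq}$ and summing over $p,q$ gives a vector-space isomorphism onto $H(\fg/I,\fk/I_{\fk}) \otimes H(I,I_{\fk})^{\fg/I}$.

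The substance of the proof is to show that $\theta$ carries the cup-product pairing \eqref{e.pairing} on $H^*(H^*)$ to the unsigned product $*$. I would break this into two steps. The first is that the pairing \eqref{e.pairing} is compatible with passing to invariant coefficients: the module maps $V_i^{\fg/I} \hookrightarrow V_i$ (for $V_i = H^{q_i}(I,I_{\fk})$) intertwine the restriction of the $I$-cohomology cup product with the full pairing, so by naturality of the cup product in the coefficient module one gets a commutative square of pairings whose vertical, inclusion-induced maps are the hypothesis isomorphisms; hence \eqref{e.pairing} may be computed on invariant coefficients. The second step is that, for trivial coefficients, the cup product splits with no Koszul sign, i.e.\ $(\alpha \otimes v) \cup (\beta \otimes w) = (\alpha \cup \beta) \otimes (v \cup w)$, since the module pairing merely multiplies the coefficient values and is not interleaved with the shuffle of the $\fg/I$-arguments. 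Combining the two steps shows $\theta$ intertwines \eqref{e.pairing} with $*$.

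With this in hand, multiplicativity of $\Psi$ is a sign computation. Proposition~\ref{p.comparepairing}, specialized to trivial coefficients, gives $\psi(e \cup e') = (-1)^{p_2 q_1}\,\psi(e) \cup \psi(e')$ for $e \in E_2^{p_1 q_1}$, $e' \in E_2^{p_2 q_2}$, where the right-hand $\cup$ is \eqref{e.pairing}. Applying $\theta$ and using the previous paragraph, $\Psi(e \cup e') = (-1)^{p_2 q_1}\,\Psi(e) * \Psi(e')$, which by the definition \eqref{e.signedproduct} of the product on $H(\fg/I,\fk/I_{\fk}) \otimes H(I,I_{\fk})^{\fg/I}$ equals $\Psi(e)\cdot\Psi(e')$; thus $\Psi$ is an algebra isomorphism. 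Finally I would verify the edge compatibilities: for $e_1 \in E_2^{p0}$ one has $V = H^0(I,I_{\fk}) = \F$, so $\theta$ is the canonical identification $\eta \mapsto \eta \otimes 1$ and $\Psi(e_1) = \psi(e_1) \otimes 1$, giving \eqref{e.compat1}; for $e_2 \in E_2^{0q}$ one has $\psi(e_2) \in H^0(H^q) = H^q(I,I_{\fk})^{\fg/I}$ and $\theta$ in degree $p=0$ is $v \mapsto 1 \otimes v$, giving $\Psi(e_2) = 1 \otimes \psi(e_2)$, which is \eqref{e.compat2}.

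I expect the main obstacle to be the first step of the second paragraph: verifying that the hypothesis isomorphism is compatible with the cup product, i.e.\ that restricting to invariant coefficients respects the multiplicative structure. This is where the assumption \eqref{e.includeiso} is used in an essential, rather than purely additive, way, and it must be checked at the cochain level (using that \eqref{e.pairing} is defined by a cochain pairing) so as to match the shuffle-product formula. Once this compatibility is established, the remaining work---the trivial-coefficient splitting and the reconciliation of the sign $(-1)^{p_2 q_1}$ with \eqref{e.signedproduct} via Proposition~\ref{p.comparepairing}---is routine bookkeeping.
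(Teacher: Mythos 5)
Your proposal is correct and follows essentially the same route as the paper: your map $\theta$ is precisely the inverse of the paper's composite $F$ (the cochain-level splitting $f\colon H(\fg/I,\fk/I_{\fk})\otimes H(I,I_{\fk})^{\fg/I}\to H(\fg/I,\fk/I_{\fk};H(I,I_{\fk})^{\fg/I})$ followed by the hypothesis isomorphism \eqref{e.includeiso}), and your multiplicativity argument via Proposition \ref{p.comparepairing} together with the unsigned-product property of the splitting, as well as the $p=0$ and $q=0$ edge verifications, match the paper's proof step for step. The two points you flag as needing cochain-level care --- that \eqref{e.includeiso} respects products and that the trivial-coefficient splitting carries no Koszul sign --- are exactly the assertions the paper makes (tersely) in defining $F$, so your treatment is if anything slightly more explicit.
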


\begin{proof}
First, there is a natural isomorphism
$$
f: C^p(\fg/I,\fk/I_{\fk}) \otimes H^q(I,I_{\fk})^{\fg/I} \to C^p(\fg/I,\fk/I_{\fk}; H^q(I,I_{\fk})^{\fg/I})
$$
defined by
$$
f(c \otimes \gd)(x_1, \ldots, x_p) = c(x_1, \ldots, x_p) \gd.
$$
This map induces a cohomology
isomorphism (also denoted $f$)
$$
f: H^p(\fg/I,\fk/I_{\fk}) \otimes H^q(I,I_{\fk})^{\fg/I} \to H^p(\fg/I,\fk/I_{\fk}; H^q(I,I_{\fk})^{\fg/I}).
$$
The space $H(\fg/I,\fk/I_{\fk}; H(I,I_{\fk})^{\fg/I})$ has a ring structure.  In fact, the
definition of the product 
(which is made using the cohomology pairing induced by a pairing of representations)
is such that if we equip $H(\fg/I, \fk/I_{\fk}) \otimes H(I,I_{\fk})^{\fg/I} $ with the 
unsigned product, then $f$ is an algebra isomorphism.
The isomorphism of \eqref{e.includeiso} also respects the algebra structure,
so the composition
$$
F: H(\fg/I,\fk/I_{\fk}) \otimes H(I,I_{\fk})^{\fg/I} \stackrel{f}{\rightarrow}
 H(\fg/I,\fk/I_{\fk}; H(I,I_{\fk})^{\fg/I}) \to H(\fg/I,\fk/I_{\fk}; H(I,I_{\fk}))
$$
is an algebra isomorphism.  

We now consider
$$
F: H^p(\fg/I,\fk/I_{\fk}) \otimes H^q(I,I_{\fk})^{\fg/I} \to H^p(\fg/I,\fk/I_{\fk}; H^q(I,I_{\fk}))
$$
in case $p=0$ or $q=0$.  If $p = 0$, then $H^0(\fg/I,\fk/I_{\fk}) = \F$
and $H^0(\fg/I,\fk/I_{\fk}; H^q(I,I_{\fk})) = H^q(I,I_{\fk})^{\fg/I} $, and we have
\begin{equation} \label{e.F1}
F(1 \otimes b) = b.
\end{equation}
On the other hand, if $q=0$, then $H^0(I,I_{\fk})^{\fg/I} = \F$ and
$H^p(\fg/I,\fk/I_{\fk}; H^0(I,I_{\fk})) = H^p(\fg/I,\fk/I_{\fk})$, and we have
\begin{equation} \label{e.F2}
F(a \otimes 1) = a.
\end{equation}

Let $\nu$ denote the inverse of $F$, and define $\Psi$ as the composition
$$
\Psi: E_2 \stackrel{\psi}{\rightarrow} H(\fg/I,\fk/I_{\fk}; H(I,I_{\fk})) \stackrel{\nu}{\rightarrow}
H(\fg/I,\fk/I_{\fk}) \otimes H(I,I_{\fk})^{\fg/I}.
$$
The map $\Psi$ is a vector space isomorphism (since it is the composition of two such).
Let $e_1 \in E_2^{p_1,q_1}$ and $e_2 \in E_2^{p_2,q_2}$.  Then, applying
Proposition \ref{p.comparepairing}, we see
\begin{eqnarray*}
\Psi(e_1 e_2) & = & \nu \psi (e_1 e_2)  =  (-1)^{p_2 q_1} \nu(\psi(e_1) \psi(e_2)) \\
  & = & (-1)^{p_2 q_1} \nu(\psi(e_1)) * \nu (\psi(e_2)) = \Psi(e_1) \Psi(e_2).
\end{eqnarray*}
 Hence $\Psi$ is compatible with products, so it is an algebra isomorphism.

Finally, since $\Psi = \nu \psi$, equation \eqref{e.compat1} states that
$\nu \psi(e_1) = \psi(e_1) \otimes 1 $.  Since $F$ and $\nu$ are inverses,
this is equivalent to the equation
$\psi(e_1) = F(\psi(e_1) \otimes 1)$, which follows from \eqref{e.F2}.
The verification of \eqref{e.compat2} is similar.
\end{proof}

\begin{Cor} \label{c.productbk}
Keep the assumptions of Proposition \ref{p.productbk}.

(1) There is an isomorphism
\begin{equation} \label{e.productbk}
H^*(\fg/I,\fk/I_{\fk}) \otimes H^*(I,I_{\fk})^{\fg/I} \to E_2
\end{equation}
taking $a \otimes b$ to $\psi^{-1}(a) \psi^{-1}(b)$.

(2) Suppose that the spectral sequence degenerates at $E_2$.  Then
the pullback map $\pi^*: H(\fg/I,\fk/I_{\fk}) \to H(\fg, \fk)$ is injective
and the pullback map $i^*: H(\fg, \fk) \to H^*(I,I_{\fk})^{\fg/I}$ is surjective.
The kernel of $i^*$ is the ideal $J$ generated by elements of the form $\pi^*(a)$,
where $a$ is a positive degree homogeneous element of $H(\fg/I,\fk/I_{\fk})$.
\end{Cor}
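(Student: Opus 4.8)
The plan is to deduce both parts formally from Proposition \ref{p.productbk}, the edge-map identifications of Propositions \ref{p.bottomedge} and \ref{p.leftedge}, and the general Proposition \ref{p.spectralideal}; no new computation should be needed. Throughout the coefficient module is the trivial representation $\F$, so that $H^p(H^0) = H^p(\fg/I,\fk/I_{\fk})$ and $H^0(H^q) = H^q(I,I_{\fk})^{\fg/I}$, and the map $\eta$ of Proposition \ref{p.bottomedge} becomes the pullback $\pi^*$.

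For part (1), I would take $\Psi$ as in Proposition \ref{p.productbk} and show that its inverse is the asserted map. For homogeneous $a \in H^p(\fg/I,\fk/I_{\fk})$ and $b \in H^q(I,I_{\fk})^{\fg/I}$, write $a \otimes b = (a \otimes 1)(1 \otimes b)$ in $H(\fg/I,\fk/I_{\fk}) \otimes H(I,I_{\fk})^{\fg/I}$, noting that the sign in \eqref{e.signedproduct} is trivial because both $a_2 = 1$ and $b_1 = 1$ sit in degree $0$. Equations \eqref{e.compat1} and \eqref{e.compat2} say precisely that $\Psi(\psi^{-1}(a)) = a \otimes 1$ (viewing $\psi^{-1}(a) \in E_2^{p0}$) and $\Psi(\psi^{-1}(b)) = 1 \otimes b$ (viewing $\psi^{-1}(b) \in E_2^{0q}$). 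Since $\Psi$ is an algebra isomorphism, $\Psi^{-1}(a \otimes b) = \psi^{-1}(a)\psi^{-1}(b)$, and this extends by bilinearity, giving the desired isomorphism.

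For part (2), the degeneration hypothesis gives $E_2 = E_{\infty}$, so in the notation of the product-structure subsection $A = \oplus_p E_{\infty}^{p0}$ and $B = \oplus_q E_{\infty}^{0q}$ are identified via $\psi$ with $H(\fg/I,\fk/I_{\fk})$ and $H(I,I_{\fk})^{\fg/I}$. Under these identifications the isomorphism of part (1) is exactly the multiplication map $A \otimes B \to E_{\infty}$, so the hypothesis of Proposition \ref{p.spectralideal} holds. I would then translate the edge maps: by Proposition \ref{p.bottomedge}, $\gre_p \circ \psi^{-1} = \pi^*$, and since degeneration makes $\gre_p$ injective, $\pi^*$ is injective; dually, by Proposition \ref{p.leftedge}, $\psi \circ \gre_q = i^*$, and since degeneration makes $\gre_q$ surjective, $i^*$ is surjective. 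The kernel of $i^*$ is the kernel $J$ of the surjection $H(\fg,\fk) \to B$, which Proposition \ref{p.spectralideal}(1) identifies with the ideal generated by $A^+$. Finally, since $A^+ = \oplus_{p>0} E_{\infty}^{p0}$ sits in $H(\fg,\fk)$ as $\pi^*$ applied to the positive-degree part of $H(\fg/I,\fk/I_{\fk})$, the ideal generated by $A^+$ is exactly the ideal generated by the classes $\pi^*(a)$ with $a$ homogeneous of positive degree, completing part (2).

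The main obstacle is bookkeeping rather than genuine difficulty. One must check that the product on $A \otimes B$ used in Proposition \ref{p.spectralideal} (sign $(-1)^{q_1 p_2}$) agrees with the signed product \eqref{e.signedproduct} of Proposition \ref{p.productbk} (sign $(-1)^{p_2 q_1}$), which it does; and one must compose the edge-map and $\psi$ identifications in the correct order so that $\pi^*$ and $i^*$ are literally the maps appearing in Propositions \ref{p.bottomedge} and \ref{p.leftedge}. Once these identifications are aligned and the hypothesis of Proposition \ref{p.spectralideal} is seen to be supplied by part (1), both statements follow formally.
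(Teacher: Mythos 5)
Your proposal is correct and follows exactly the paper's own route: part (1) is obtained by showing the map $a \otimes b \mapsto \psi^{-1}(a)\psi^{-1}(b)$ inverts the algebra isomorphism $\Psi$ of Proposition \ref{p.productbk}, and part (2) is deduced by identifying $\pi^*$ and $i^*$ with the edge maps via Propositions \ref{p.bottomedge} and \ref{p.leftedge} and then invoking Proposition \ref{p.spectralideal}. The only difference is that you spell out details the paper leaves implicit (the sign check in \eqref{e.signedproduct} versus the $(-1)^{q_1 p_2}$ convention, and the factorization $a \otimes b = (a \otimes 1)(1 \otimes b)$), which is harmless.
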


\begin{proof}
Part (1) holds because the map \eqref{e.productbk} is the inverse of the
map $\Psi$ from Proposition \ref{p.productbk}.  For part (2), the injectivity
of $\pi^*$ and surjectivity of $i^*$ follow because these maps are identified
with edge maps (cf. Propositions \ref{p.bottomedge} and \ref{p.leftedge})
and the corresponding edge maps have this property.  The statement about
the kernel of $i^*$ then follows from Proposition \ref{p.spectralideal}.
\end{proof}

\section{The Belkale-Kumar product} \label{s.bk}
In this section we apply
the Hochschild-Serre spectral sequence to prove Theorem \ref{t.borelextension},
which shows that the cohomology 
of a generalized flag variety,
equipped with the Belkale-Kumar product, 
has a structure analogous to the cohomology of a fiber
bundle.

Throughout this section
$\fg$ is a complex semisimple Lie algebra.
Let $G$ denote the corresponding adjoint group,
with $B \supset H$ a Borel and maximal torus, respectively. 
Let $P$ be a parabolic subgroup of $G$ which contains $B$, and
let $m = \dim H^2(G/P)$ (cohomology is taken with complex
coefficients).  For each $t = (t_1, \ldots, t_m) \in \C^m$, 
Belkale and Kumar construct a product
$\odot_t$ on the space $H^*(G/P)$, and show that it is closely
related to the geometric Horn problem.
In \cite{EG:11}, it was shown that the ring $(H^*(G/P),\odot_t)$ is isomorphic
to a certain relative Lie algebra cohomology ring.

To describe this ring we need some more notation.  We denote the
Lie algebra of an algebraic group by the corresponding fraktur letter.
Let $\fl$ be the Levi subalgebra of $\fp$ containing $\fh$.
Let $W$ and $W_P$ denote the Weyl groups of $\fg$ and $\fl$, respectively.
Let $ \{ \ga_1, \ldots, \ga_n \} \subset \fh^*$ be the simple
roots corresponding to the positive system for which the roots of $\fb$
are positive.  Given $M \subset \{ 1, \dots, n \}$, let $\fl_M$ be the
Levi subalgebra generated by $\fh$ and the root spaces
$\fg_{\pm \ga_i }$ for $i\in M$.  Let $\fu_{M,+}$ 
(resp.~$\fu_{M,-}$) be the subalgebra
spanned by the positive (resp.~negative) root spaces not contained in
$\fl_M$.  We assume that the simple roots are numbered so that $\fl= \fl_I$
for $I =\{ \ga_{m+1}, \ldots, \ga_n \}$.

Let $t = (t_1, \ldots, t_m) \in \C^m$ and let
$J(t)=\{ 1 \le q \le m : t_q \not= 0 \}$.  Let $K = I \cup J(t)$.

Given a subspace $V$ of $\fg$, let $V_{\gD}$ denote the image
of $V$ under the diagonal map $\fg \to \fg \times \fg$.
Let 
$$
\tilde{\fu}_K = ( \fu_{K, -} \times \{ 0 \}) + ( \{ 0 \} \times  \fu_{K, +}).
$$
Define a subalgebra $\fg_K$ of $\fg \times \fg$ by
$$
\fg_K = \fl_{K, \gD} + \tilde{\fu}_K.
$$
Then $\tilde{\fu}_K$ is an ideal of $\fg_K$, and
$\fg_K/ \tilde{\fu}_K \cong \fl_K$.  Moreover,
$\fg_K$  contains $\fl_{\gD}$
(since $\fl_K$ does). 

For $t=0$, we denote $\tilde{\fu}_K$ by $\tilde{\fu}$, and note
that $\tilde{\fu} = \fu_- \times \{ 0 \} + \{ 0 \} \times \fu$,
where $\fu$ is the nilradical of $\fp$, and $\fu_-$ is the
opposite nilradical.  Thus, for $t=0$, $\fg_K = \fl_{\gD} + \tilde{\fu}$,
and $H^*(\fg_K, \fl_{\gD}; \C) \cong H^*(\tilde{\fu})^L$. 

The following theorem describes the Belkale-Kumar
product in terms of relative Lie algebra cohomology.  The
generic case and the case $t=0$ are established in \cite{BeKu:06};
the general case is in \cite{EG:11}.

\begin{Thm}\label{t.egone}
\par\noindent (1) The rings $(H^*(G/P),\odot_t)$ and $H^*(\fg_K, \fl_{\gD};\C)$ are isomorphic.
\par\noindent (2) $(H^*(G/P),\odot_0) \cong H^*(\tilde{\fu})^L$.
\end{Thm}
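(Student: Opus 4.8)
The plan is to treat the two assertions separately. Assertion (1) is the substantive input: it is essentially the main theorem of \cite{EG:11}, with the generic case and the case $t=0$ already contained in \cite{BeKu:06}. Assertion (2) I would then deduce from (1) specialized to $t=0$, together with a purely formal reduction that is a degenerate instance of the Hochschild--Serre machinery built earlier in this paper. Thus the real work lies in recalling and applying the identification of the Belkale--Kumar product with a relative Lie algebra cohomology ring, and the passage to (2) is then a short computation.

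For (1), the strategy of \cite{EG:11} is to begin from Kostant's description of $H^*(G/P)$ in terms of the Lie algebra cohomology of nilradicals, and to realize the one-parameter family $\odot_t$ as the effect of replacing $\fg \times \fg$ by the subalgebra $\fg_K = \fl_{K,\gD} + \tilde{\fu}_K$, whose only dependence on $t$ is through the combinatorial datum $K = I \cup J(t)$. The crucial point is that the Belkale--Kumar structure constants, obtained from the ordinary cup-product structure constants by discarding every contribution that fails the weight-matching (\emph{Levi-movability}) condition, agree exactly with the cup product on $H^*(\fg_K, \fl_{\gD}; \C)$. Establishing this agreement is the heart of the argument and is the main obstacle; everything else is formal.

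Granting (1), I would prove (2) as follows. At $t=0$ we have $J(0) = \emptyset$, so $K = I$, whence $\fl_K = \fl_I = \fl$ and $\fg_K = \fl_{\gD} + \tilde{\fu}$ with $\tilde{\fu} = \fu_- \times \{0\} + \{0\} \times \fu$ the nilpotent ideal. It then remains to identify $H^*(\fg_K, \fl_{\gD}; \C)$ with $H^*(\tilde{\fu})^L$. Since $\tilde{\fu} \cap \fl_{\gD} = 0$ and $\fl_{\gD}$ is reductive in $\fg_K$ (as $\fl$ is reductive), restriction of cochains identifies $C^n(\fg_K, \fl_{\gD}; \C) = \Hom_{\fl_{\gD}}(\Lambda^n(\fg_K/\fl_{\gD}), \C)$ with $C^n(\tilde{\fu}; \C)^{\fl}$, using the $\fl$-module isomorphism $\fg_K/\fl_{\gD} \cong \tilde{\fu}$; this identification is compatible with both the differential and the cup product, so it is an isomorphism of differential graded algebras onto the $\fl$-invariant subcomplex. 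Because $\fl$ is reductive, invariants commute with cohomology, and passing to cohomology gives a ring isomorphism $H^*(\fg_K, \fl_{\gD}; \C) \cong H^*(\tilde{\fu})^{\fl} = H^*(\tilde{\fu})^L$. Equivalently, this is the degenerate Hochschild--Serre spectral sequence of Theorem \ref{t.hochserre} with $I = \tilde{\fu}$, $\fk = \fl_{\gD}$, in which $I \cap \fk = 0$ and $\fg_K/\tilde{\fu} \cong \fl_{\gD}$ force $E_2^{pq} = H^p(\fl, \fl; H^q(\tilde{\fu}))$ to be concentrated on the line $p = 0$, where the edge map $i^*$ of Proposition \ref{p.leftedge} is the ring isomorphism (its multiplicativity being the statement of Proposition \ref{p.comparepairing}). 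Composing with the isomorphism of (1) at $t = 0$ yields $(H^*(G/P), \odot_0) \cong H^*(\tilde{\fu})^L$, as required.
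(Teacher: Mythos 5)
Your proposal is correct and takes essentially the same route as the paper: the paper proves Theorem \ref{t.egone} by citation alone (the generic case and $t=0$ to \cite{BeKu:06}, the general case to \cite{EG:11}), and the identification $H^*(\fg_K, \fl_{\gD};\C) \cong H^*(\tilde{\fu})^L$ at $t=0$, which you derive by restricting cochains along $\fg_K/\fl_{\gD} \cong \tilde{\fu}$ and using that invariants under the reductive $\fl$ commute with cohomology, is exactly the fact the paper records without proof in the paragraph preceding the theorem. Your spelled-out argument for part (2), including the degenerate spectral sequence alternative, is a correct filling-in of that standard step, so nothing further is needed.
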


We now apply the Hochschild-Serre spectral sequence for relative
Lie algebra cohomology.   By Theorem \ref{t.hochserre}, there
is a spectral sequence converging to $H^*(\fg_K, \fl_{\gD})$
with $E_2$-term
\begin{equation}\label{e.hochserre}
H^*(\fg_K/ \tilde{\fu}_K, \fl_{\gD};
H^*(\tilde{\fu}_K, \C)).
\end{equation}

Note that in this spectral sequence, the role of $I_{\fk}$ is played by
$\tilde{\fu}_K \cap \fl_{\gD} = 0$.

\begin{Prop} \label{p.hsdegen}
The above spectral sequence degenerates at the $E_2$-term.
\end{Prop}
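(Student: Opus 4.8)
The plan is to prove degeneration by a parity argument: I will show that the $E_2$-page is supported in bidegrees $(p,q)$ with both $p$ and $q$ even, which forces every higher differential to vanish. In the Hochschild--Serre setup the data are $I=\tilde{\fu}_K$, $\fk=\fl_{\gD}$, and $I_{\fk}=\tilde{\fu}_K\cap\fl_{\gD}=0$, so $\fk/I_{\fk}=\fl_{\gD}\cong\fl$ and $\fg_K/\tilde{\fu}_K\cong\fl_K$; by Theorem \ref{t.hochserre}(1) the $E_2$-term is $E_2^{pq}=H^p(\fl_K,\fl;H^q(\tilde{\fu}_K))$. The differential $d_r\colon E_r^{pq}\to E_r^{p+r,q-r+1}$ shifts $(p,q)$ by $(r,1-r)$, and since $r$ and $1-r$ have opposite parity, $d_r$ reverses the parity of exactly one coordinate. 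Hence if $E_2^{pq}=0$ whenever $p$ or $q$ is odd, then every $d_r$ ($r\ge 2$) lands in a zero group; the same support condition is inherited by each later page (a subquotient), so the sequence degenerates at $E_2$.

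To identify $E_2^{pq}$ I would first verify the hypothesis of Proposition \ref{p.productbk} for the reductive pair $(\fl_K,\fl)$, namely that for any finite-dimensional $\fl_K$-module $V$ one has $H^*(\fl_K,\fl;V)\cong H^*(\fl_K,\fl)\otimes V^{\fl_K}$. Two points enter. First, the center $\fz_K$ of $\fl_K$ is the joint kernel in $\fh$ of the simple roots $\ga_i$, $i\in K$, and since $I\subseteq K$ this kernel lies in the center of $\fl$, so $\fz_K\subseteq\fl$; as $\fz_K$ is central it acts trivially on $\fl_K/\fl$, and because relative cochains are $\fl$-equivariant maps out of $\gL^*(\fl_K/\fl)$, only the $\fz_K$-weight-zero subspace of $V$ contributes to $C^*(\fl_K,\fl;V)$. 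Second, on the remaining semisimple part $[\fl_K,\fl_K]$ I invoke the Casimir: the operator $\gt_\Omega$ extending $\gt_z=d i_z+i_z d$ acts as $0$ on cohomology while acting by the nonzero Casimir scalar on each nontrivial $[\fl_K,\fl_K]$-isotypic component, so those components contribute nothing. Combining, only $V^{\fl_K}$ survives, and with $V=H^q(\tilde{\fu}_K)$ this gives $E_2^{pq}\cong H^p(\fl_K,\fl)\otimes H^q(\tilde{\fu}_K)^{\fl_K}$, as in Corollary \ref{c.productbk}.

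It then remains to check that both tensor factors are concentrated in even degrees. For $H^*(\fl_K,\fl)$ this is the evenness of the cohomology of the partial flag variety of $L_K$ with Levi $L$ (equivalently, apply Kostant's theorem to $\fl_K$), so $H^p(\fl_K,\fl)=0$ for $p$ odd. For the coefficients, $\tilde{\fu}_K=\fu_{K,-}\oplus\fu_{K,+}$ as a Lie algebra and $\fl_K$-module, so by Künneth $H^q(\tilde{\fu}_K)=\bigoplus_{a+b=q}H^a(\fu_{K,-})\otimes H^b(\fu_{K,+})$. By Kostant's theorem \cite{Kos:63}, $H^a(\fu_{K,+})$ and $H^a(\fu_{K,-})$ are multiplicity-free sums of irreducible $\fl_K$-modules indexed by the length-$a$ minimal coset representatives $W^K$ of $W_K$ in $W$, the two families being mutually dual; taking $\fl_K$-invariants therefore pairs $H^a(\fu_{K,-})$ with $H^a(\fu_{K,+})$ term by term, so $H^q(\tilde{\fu}_K)^{\fl_K}=0$ unless $q=2a$ is even (one line for each $w\in W^K$, in bidegree $(\ell(w),\ell(w))$). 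Thus $E_2^{pq}=0$ unless $p$ and $q$ are both even, completing the parity argument.

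I expect the main obstacle to be exactly the factorization $E_2\cong H^*(\fl_K,\fl)\otimes H^*(\tilde{\fu}_K)^{\fl_K}$ with both factors purely even: the central-character plus Casimir vanishing for $(\fl_K,\fl)$ (the hypothesis of Proposition \ref{p.productbk}) together with the Kostant bookkeeping for the invariants in $H^*(\fu_{K,-})\otimes H^*(\fu_{K,+})$. As a consistency check, and as an alternative to the parity argument, these same computations yield $\dim E_2=\dim H^*(\fl_K,\fl)\cdot\dim H^*(\tilde{\fu}_K)^{\fl_K}=(|W_K|/|W_P|)(|W|/|W_K|)=|W/W_P|=\dim H^*(G/P)=\dim H^*(\fg_K,\fl_{\gD})$ by Theorem \ref{t.egone}(1); since $E_\infty$ is a subquotient of $E_2$ of equal total dimension, the spectral sequence must already degenerate at $E_2$.
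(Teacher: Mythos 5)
Your proposal is correct, but its primary mechanism is genuinely different from the paper's. The paper proves degeneration by a pure dimension count: it establishes the identification of the $E_2$-term with $H^*(\fl_K,\fl)\otimes H^*(\tilde{\fu}_K)^{L_K}$, computes $\dim E_2=|W_{P_K}/W_P|\cdot|W/W_{P_K}|=|W^P|$ (using \cite{EG:11}, Equation (3.10), and Theorem 5.14 of \cite{Kos:61}), and compares with $\dim H^*(\fg_K,\fl_\gD)=|W^P|$ from Corollary 3.18 of \cite{EG:11}; since $E_\infty$ is a subquotient of $E_2$ of the same total dimension, all higher differentials vanish. Your parity argument --- $E_2$ is supported in even-even bidegrees, and $d_r$ shifts bidegree by $(r,1-r)$, which changes the parity of exactly one coordinate --- is a different and in one respect sharper route: it needs nothing about the abutment (in particular not the dimension formula from \cite{EG:11}), only the shape of $E_2$; in exchange it requires the evenness of $H^*(\fl_K,\fl)$ and the Kostant bookkeeping placing $H^*(\tilde{\fu}_K)^{L_K}$ in bidegrees $(\ell(w),\ell(w))$, whereas the paper's count uses only total dimensions. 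Both arguments hinge on the same key step, the vanishing of the contribution of nontrivial $L_K$-isotypic components $V_i$ of $H^*(\tilde{\fu}_K)$; here too your treatment differs in detail: you split off the center via a $\fz_K$-weight argument on relative cochains (your observation $\fz_K\subset\fz(\fl)\subset\fl$ is correct since $I\subset K$) and then invoke Casimir vanishing for the semisimple part, while the paper runs a second Hochschild--Serre spectral sequence for the pair $(\fl_K,\fl)$ with central ideal $\fz_K$ --- note that there $I_{\fk}=\fz_K\neq 0$, which is precisely the generality the paper develops --- and then cites Theorem 28.1 of \cite{CE:48}. One caveat: your homotopy sketch for the Casimir is too glib as stated, since for $z\notin\fl$ the operator $\gt_z$ does not preserve the relative complex $C^*(\fl_K,\fl;V)$ (the paper stresses exactly this obstruction when constructing the $\fg/I$-action), so ``$\gt_\Omega$ extending $\gt_z=di_z+i_zd$'' is not literally defined on relative cochains; the clean fix is to cite the relative Whitehead/Wigner vanishing (e.g., \cite{BoWa:00}, Ch.~I) for the pair obtained after splitting off $\fz_K$, or to follow the paper's auxiliary spectral sequence. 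Finally, your closing ``consistency check'' dimension count is, essentially verbatim, the paper's actual proof.
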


\begin{proof}
By standard facts concerning spectral sequences, it suffices to
prove that the dimension of the $E_2$-term coincides with
$\dim(H^*(\fg_K, \fl_{\gD}))$, which by Corollary 3.18 of \cite{EG:11}
is $|W^P|$.
 For this, we may identify
$\fg_K/\tilde{\fu}_K = \fl_{K, \gD}$.
We decompose 
$$H^*(\tilde{\fu}_K, \C) = 
H^*(\tilde{\fu}_K, \C)^{L_K} \oplus \sum_i V_i,
$$
where the sum is over nontrivial irreducible representations of $L_K$
appearing in the space $H^*(\tilde{\fu}_K, \C)$.
For each $V_i$, there is a Hochschild-Serre spectral sequence
converging to $H^*(\fl_K, \fl ; V_i)$ with $E_2$-term,
$H^*(\fl_K/\fz_K, \fl/\fz_K; H^*(\fz_K,V_i))$. 
Since $\fz_K$ acts semisimply on $V_i$,
 $H^*(\fz_K, V_i)=0$ if $\fz_K$ acts nontrivially on $V_i$.
Hence the modules $H^*(\fz_K, V_i)$ are a direct sum of 
nontrivial $\fl_K/\fz_K$-modules. Since $\fl_K/\fz_K$ is semisimple,
Theorem 28.1 from \cite{CE:48} implies that 
$H^*(\fl_K/\fz_K, \fl/\fz_K; H^*(\fz_K, V_i))=0$.  Hence the $E_2$-term
\eqref{e.hochserre} coincides with
\begin{equation} \label{e.invhochserre}
H^*(\fl_K, \fl_{\gD}; 
H^*(\tilde{\fu}_K, \C)^{L_K}).
\end{equation}
This last space coincides with
$$
H^*(\fl_K, \fl ) \otimes H^*(\tilde{\fu}_K, \C)^{L_K}.
$$
It is well-known that $\dim H^*(\fl_K, \fl ) = \dim H^*(P_K/P) = |W_{P_K}/W_P|$, where $W_{P_K}$ is the Weyl group
of $L_K$ (see \cite{EG:11}, Equation (3.10)).  Also, by Theorem 5.14 from \cite{Kos:61}, $\dim H^*(\tilde{\fu}_K, \C)^{L_K}=  |W/W_{P_K}|$.
We conclude that the dimension of the $E_2$ term \eqref{e.hochserre} is
$$
|W_{P_K}/W_P| \cdot |W/W_{P_K}| = |W^P|,
$$ and the
proposition follows. 
\end{proof}

\begin{Thm} \label{t.e2ring}

\noindent (1) $H^*(\fl_K, \fl)$ is isomorphic to a subalgebra of $H^*(\fg_K, \fl_{\gD})$.
\par\noindent (2) Let $I_+ = \sum_{q > 0} H^q(\fl_K, \fl)H^*(\fg_K, \fl_{\gD})$,
where we identify $H^*(\fl_K, \fl)$ with its image in $H^*(\fg_K, \fl_{\gD})$.
 Then
$$H^*(\fg_K, \fl_{\gD})/I_+ \cong H^*(\tilde{\fu}_K, \C)^{L_K}.$$
\par\noindent (3) The cohomology ring $H^*(P_K/P)$ with the usual
cup product is isomorphic to a graded subalgebra $A$
of $(H^*(G/P), \odot_t)$.  Further, the ring
$(H^*(G/P_K), \odot_0) \cong
(H^*(G/P), \odot_t)/I_+$, where $I_+$ is the ideal of
$(H^*(G/P), \odot_t)$ generated by positive degree elements
of $A$.
\end{Thm}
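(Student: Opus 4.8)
The plan is to derive all three parts of Theorem \ref{t.e2ring} by specializing the machinery of Corollary \ref{c.productbk} to the Hochschild-Serre spectral sequence of the triple $(\fg_K, \fl_{\gD}, \tilde{\fu}_K)$, in which $\fg_K$ plays the role of $\fg$, the ideal $\tilde{\fu}_K$ plays the role of $I$, and $\fl_{\gD}$ plays the role of $\fk$. The essential simplification here is that $I_{\fk} = \tilde{\fu}_K \cap \fl_{\gD} = 0$, so that $\fg/I = \fg_K/\tilde{\fu}_K \cong \fl_K$ and $\fk/I_{\fk} \cong \fl$; consequently the $E_2$-term is $H^*(\fl_K, \fl; H^*(\tilde{\fu}_K))$, and $H^*(I, I_{\fk})^{\fg/I} = H^*(\tilde{\fu}_K)^{L_K}$. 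By Proposition \ref{p.hsdegen} this spectral sequence degenerates at $E_2$, so parts (1) and (2) will follow from Corollary \ref{c.productbk} once the cohomological hypothesis of Proposition \ref{p.productbk} is established.

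First I would verify that hypothesis. I claim it holds for every finite-dimensional $\fl_K$-module $V$, and in particular for $V = H^*(\tilde{\fu}_K)$, which is the only module actually needed in the proof of Proposition \ref{p.productbk}. Since $\fl_K$ is reductive, $V$ splits as $V^{\fl_K} \oplus \bigoplus_i V_i$ with each $V_i$ a nontrivial irreducible $\fl_K$-module, and relative Lie algebra cohomology is additive in the coefficient module; it therefore suffices to show $H^*(\fl_K, \fl; V_i) = 0$ for each nontrivial irreducible $V_i$. This vanishing is precisely the computation carried out in the proof of Proposition \ref{p.hsdegen}: the inner spectral sequence with ideal $\fz_K$ collapses because $H^*(\fz_K, V_i) = 0$ when $\fz_K$ acts nontrivially, and the remaining semisimple contribution is killed by Theorem 28.1 of \cite{CE:48}.

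With the hypothesis in hand, Corollary \ref{c.productbk} applies directly. Its part (2), combined with the degeneration from Proposition \ref{p.hsdegen}, gives that the pullback $\pi^*: H^*(\fl_K, \fl) \to H^*(\fg_K, \fl_{\gD})$ is an injective ring homomorphism, whose image is the desired subalgebra of part (1), and that $i^*: H^*(\fg_K, \fl_{\gD}) \to H^*(\tilde{\fu}_K)^{L_K}$ is surjective with kernel the ideal generated by the elements $\pi^*(a)$ for $a$ of positive degree. This kernel is exactly the ideal $I_+$ of part (2), so $i^*$ descends to an isomorphism $H^*(\fg_K, \fl_{\gD})/I_+ \cong H^*(\tilde{\fu}_K)^{L_K}$, which is part (2).

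Finally, for part (3) I would translate (1) and (2) through the dictionary of Theorem \ref{t.egone}. Part (1) of that theorem identifies $(H^*(G/P), \odot_t)$ with $H^*(\fg_K, \fl_{\gD})$, and the standard isomorphism $H^*(\fl_K, \fl) \cong H^*(P_K/P)$ of graded rings (see \cite{EG:11}) turns the subalgebra of our part (1) into a copy of $H^*(P_K/P)$. For the quotient, the key bookkeeping observation is that $\tilde{\fu}_K = \fu_{K, -} \times \{0\} + \{0\} \times \fu_{K, +}$ is exactly the analog of $\tilde{\fu}$ for the parabolic $P_K$ at parameter $0$, so that Theorem \ref{t.egone}(2), applied with $P_K$ in place of $P$, yields $(H^*(G/P_K), \odot_0) \cong H^*(\tilde{\fu}_K)^{L_K}$; combining this with part (2) gives $(H^*(G/P), \odot_t)/I_+ \cong (H^*(G/P_K), \odot_0)$. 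The main obstacle is not a single hard estimate but the verification of the cohomological hypothesis above, together with correctly matching the two instances of Theorem \ref{t.egone} (for $P$ at parameter $t$, and for $P_K$ at parameter $0$) so that the abstract spectral-sequence statement converts faithfully into the claimed statement about the Belkale-Kumar product.
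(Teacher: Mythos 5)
Your proposal is correct and takes essentially the same route as the paper: the paper likewise deduces (1) and (2) from Corollary \ref{c.productbk} together with the degeneration of Proposition \ref{p.hsdegen} (whose proof contains exactly the verification of the invariance hypothesis of Proposition \ref{p.productbk} that you spell out), and obtains (3) from (1), (2), and Theorem \ref{t.egone}. Your explicit bookkeeping identifying $\tilde{\fu}_K$ as the $t=0$ datum for the parabolic $P_K$, so that Theorem \ref{t.egone}(2) applies with $P_K$ in place of $P$, is precisely the intended reading of the paper's terse final step.
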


\begin{proof} 
Let $p:\fg_K \to \fg_K/\tilde{\fu}_K \cong \fl_K$ be the
projection.  By Propositions \ref{p.bottomedge} and \ref{p.hsdegen} and 
Corollary \ref{c.productbk}, $p^*$
is an injective ring homomorphism, which proves (1).
Part (2) follows immediately from Corollary \ref{c.productbk}.
Part (3) follows from parts (1) and (2) and Theorem \ref{t.egone}. 
\end{proof}


\end{document}